\documentclass[twocolumn]{autart}    
\usepackage{amsmath}
\usepackage{amsfonts,amssymb}
\usepackage{CJK}
\usepackage{multicol}
\usepackage{multirow}
\usepackage{diagbox}
\usepackage{graphicx}
\usepackage{subfig} 
\usepackage{lineno} 
\allowdisplaybreaks 
\usepackage{epstopdf} 
\usepackage{pgf,tikz}
\usepackage{mathrsfs}
\usepackage{tikz}
\usepackage{graphicx}     
\usepackage[utf8]{inputenc}
\usepackage{subfig} 
\usepackage{subfloat}
\usepackage{xcolor}
\usepackage{colortbl,booktabs}
\usepackage{cite}
\usepackage{graphicx}
\usepackage{subfig}
\usepackage{algorithm}
\usepackage{algorithmicx}
\usepackage{algpseudocode}

\allowdisplaybreaks[4]
\newtheorem{theorem}{Theorem}
\newtheorem{lemma}{Lemma}

\newtheorem{remark}{Remark}
\newtheorem{assumption}{Assumption}

\definecolor{blue}{rgb}{0,0,0}

\newbool{submitA} \setbool{submitA}{false} 

\usepackage[round]{natbib}

\begin{document}
	
	\begin{frontmatter}
		\title{Distributed Seeking for Fixed Points of Biased Stochastic Operators: A Communication-Efficient Approach} 
		
		\thanks[footnoteinfo]{This work was supported in part by the National Natural Science Foundation of China under Grant 62133003.}
	
		\thanks[Corresponding author]{Corresponding author.}
		
		\author[111]{Fan Li}\ead{funnynice2024@163.com},
		\author[222]{Lei Xu}\ead{lei5@kth.se},
		\author[333]{Xinlei Yi}\ead{xinleiyi@tongji.edu.cn},
		\author[444]{Guanghui Wen}\ead{wenguanghui@gmail.com},
		\author[555]{Yang Shi}\ead{yshi@uvic.ca},
		\author[111,Corresponding author]{Tao Yang}\ead{yangtao@mail.neu.edu.cn},  

		\address[111]{State Key Laboratory of Synthetical Automation for Process Industries, Northeastern University, Shenyang 110819, China}  
		\address[222]{Division of Decision and Control Systems, School of Electrical Engineering and Computer Science, KTH Royal Institute of Technology, and also affiliated with Digital Futures, 100 44, Stockholm, Sweden}
		\address[333]{Department of Control Science and Engineering, College of Electronics and Information Engineering, Tongji University, China}
		\address[444]{Department of Systems Science, School of Mathematics, Southeast University,  China}
		\address[555]{Department of Mechanical Engineering, University of
			Victoria, Canada}

		\begin{keyword}                           
	Distributed stochastic optimization, communication-efficiency, fixed point theory, Krasnosel'ski\u{\i}--Mann iteration, multi-agent networks.
		\end{keyword}                             

\begin{abstract}                          
This paper investigates the distributed fixed point seeking problem of sum-separable stochastic operators over the multi-agent network. Based on inexact Krasnosel'ski\u{\i}--Mann  iterations, the communication-efficient distributed algorithm is proposed under the relaxed growth bias and variance conditions, generalizing traditional unbiased and bounded additive variance assumptions. To enhance communication efficiency, we integrate communication compression and dynamic period skipping techniques, particularly adopting a unified compressor that allows both relative and absolute compression errors. By introducing a surrogate function for general non-contractive and contractive operators, we establish convergence guarantees of the distributed fixed point iteration, achieving among the first theoretical unifications with distributed non-convex optimization algorithms.   Finally, numerical simulations validate the effectiveness of the theoretical results.
\end{abstract}
\end{frontmatter}
	
		\vspace{-3mm}
	\section{Introduction}
		\vspace{-3mm}
	Fixed point theory, being fundamental to optimization analysis~\citep{book1}, offers a rigorous framework by interpreting iterative algorithms as methods for seeking fixed points of associated operators. This operator-theoretic approach not only unifies convergence analysis but also guides novel algorithmic designs~\citep{OptTAC}. However, centralized fixed-point methods face scalability bottlenecks in large-scale networks, prompting the shift to distributed settings. Such distributed frameworks are crucial in machine learning~\citep{Icml2014}, network control~\citep{Tang}, and large-scale optimization~\citep{Yang2019,XinleiAuto} due to their inherent scalability and broad applicability.
	
		\vspace{-3mm}
	
	Centralized fixed point research has progressed from Picard iteration to Krasnosel'ski\u{\i}--Mann (KM) iterations~\citep{KM1955} for non-expansive operators. Yet, these methods struggle with cost and scalability in network applications. In response, distributed fixed-point iterations without a global coordinator have gained focus. For example,~\cite{XiuTAC,for3,for2} explore common fixed points for operator families but require restrictive assumptions on the existence common fixed points for all local operators, hindering application to distributed optimization. To overcome this,~\cite{XiuAuto} pioneer the sum-separable operator case via distributed KM iterations. Subsequent studies~\cite{locTac,DOT} develop linearly convergent distributed algorithms for undirected and directed graphs, respectively, while~\cite{FanTcns} further extend these results to continuous-time dynamical systems. These advancements in distributed fixed point iteration provide powerful analytical tools for distributed optimization. Nevertheless, the above works invariably rely on the (quasi-)non-expansiveness of operators, severely limiting the generality of the theoretical framework. In order to recover non-convex optimization, it is necessary to focus on the case of general Lipschitz operators allowing expansiveness.
	
		\vspace{-3mm}
	
	The existing theoretical framework for distributed fixed point iterations is centered around deterministic operators. However, in practical scenarios, only noisy approximations of local operators are typically available. To broaden the applicability of operator theory,~\cite{NicolaTac} and~\cite{PointTac} investigate the fixed point seeking problem for stochastic operators based on inexact KM iterations, incorporating STORM-based variance-reduced variants and high-probability convergence guarantees, respectively. However, they are limited to centralized settings only.  
	Additionally, while research on distributed stochastic optimization algorithms is extensive, it largely relies on idealized assumptions of unbiased estimates. However, this premise is often violated in practical scenarios where mechanisms such as differential privacy or zeroth-order approximations inevitably induce systematic bias. Unlike zero-mean noise, such bias resists elimination via simple averaging and can severely degrade algorithmic convergence. Recently,~\cite{Pushi} extend this theoretical limit by relaxing the bounded additive variance condition to establish convergence under the relaxed growth variance condition for stochastic gradient tracking algorithms with non-convex objectives. Although~\cite{Pushi} relax the variance condition, it strictly relies on the unbiased gradient estimation assumption. Moving from unbiased to biased stochastic frameworks is essential for capturing practical sampling imperfections. How to design biased stochastic distributed optimization algorithms under mild conditions remains an open problem.

		\vspace{-3mm}
	
	As the network scale increases, the communication cost becomes a key bottleneck that restricts the efficiency of algorithms. Reducing the amount of information transmission and the frequency of communication constitutes two primary approaches to decreasing communication costs in distributed algorithms. Under limited communication bandwidth, compression mechanisms effectively reduce the amount of transmitted data. Building on biased compressors, the compression algorithm is designed in~\cite{ZhangjiaqiTAC} and achieves linear convergence by compressing the innovation error to avoid the possible divergence due to the accumulation of the compression error as reported in~\cite{Lei12} and~\cite{Lei13}. For non-convex objectives,~\cite{XinleiTAC} design distributed primal--dual algorithms and achieves linear convergence under the Polyak--Łojasiewicz condition. To encompass more types of quantizations in practical systems, one research direction is to generalize assumptions about compressors. Along this line,~\cite{Pushi} and~\cite{JOTA} have made efforts by introducing a unified compression assumption that allows both relative and absolute compression errors and established convergence guarantees. Complementary to communication compression, period skipping~\citep{TAC22} and local update strategies~\citep{LocalJ,LocalSun} have been widely adopted to diminish communication frequency by permitting multiple local iterations between network transmissions. However, these results are strictly confined to optimization problems. To date, no existing work has investigated communication-efficient distributed fixed point iterations for general operators. This absence is largely attributed to theoretical hurdles. Unlike optimization where objective function values provide a natural metric for convergence, fixed point iterations rely solely on operator properties. This makes controlling the accumulation of compression errors considerably more difficult, particularly for the non-contractive or expansive operators considered in this paper. Therefore, establishing a communication-efficient operator framework is both necessary and nontrivial.

		\vspace{-3mm}
	
	Based on the above observations, the aim of this paper is to design a unified framework of communication-efficient algorithms for distributed fixed point seeking of biased stochastic operators. In summary, the contributions can be summarized as follows.
	\begin{enumerate}
		
			\vspace{-3mm}
		
		\item  This paper investigates the distributed fixed point seeking problem for sum-separable biased stochastic operators, extending beyond the deterministic operator frameworks of prior studies~\citep{for3,for2,XiuAuto,DOT,locTac} and advancing the centralized stochastic setting in~\cite{PointTac} to a distributed environment. A key generalization lies in accommodating non-contractive operators that may exhibit expansiveness, thereby relaxing the non-expansiveness condition inherent in existing convergence analysis~\citep{FanTcns,XiuAuto,DOT}. Furthermore, we relax the traditional unbiased and bounded additive variance assumptions commonly used in distributed stochastic optimization~\citep{YuanAuto,Jueyou,TAC22},  establishing a novel convergence analysis framework under relaxed growth conditions for both bias and variance.

		\item Departing from the perfect communication settings in existing distributed operator frameworks~\citep{XiuAuto,DOT,FanTcns,locTac}, and building upon inexact KM iterations, we design a unified communication-efficient algorithm that simultaneously reduces both the volume and frequency of information transmission between agents by seamlessly integrating two techniques into a single framework. First, to reduce the transmission volume, the proposed algorithm employs a unified compression scheme inspired by~\cite{Pushi,TIT} that allows both relative and absolute compression errors.  Second, to further decrease the communication frequency, this algorithm incorporates a dynamic period skipping technique, which generalizes the fixed period skipping mechanisms in~\cite{TAC22} by extending them to a dynamically adjustable time-varying communication interval mechanism.

		\item 	We establish comprehensive convergence guarantees for the proposed algorithm. For the general case of non-contractive operators, Theorem~\ref{Theo1} demonstrates convergence to a neighborhood of the fixed point at a rate of $\mathcal{O}({\ln T}/{\sqrt{T}} )$. To the best of our knowledge, this result, derived via a storage function construction, provides among the first theoretical unification with distributed non-convex optimization algorithms~\citep{Jueyou,AdaOnline,KaiHigh,BiaseTSP}. Under the contractive operator regime, Theorem~\ref{Theo2} proves a sharper convergence rate of $\mathcal{O}({\ln T}/{T})$.  A significant theoretical advancement over the closely related work~\citep{BiaseTSP} lies in our generalization to a much broader operator framework.  While the theoretical guarantee in~\cite{BiaseTSP} restricts the state-dependent bias by requiring it to be strictly bounded by the inverse of network's heterogeneity, our derived theoretical framework fully decouples these two parameters, guaranteeing convergence for any state-dependent bias $P<1$ independent of the network's heterogeneity level.
	\end{enumerate}
	The rest of this paper is organized as follows. In Section~\ref{s2}, standing preliminary and specific problem statements are provided.  Section~\ref{SecRul} presents the communication-efficient distributed algorithm and analyzes the convergence properties. In Section~\ref{Sec: Simulation}, some numerical experiments are given to verify the theoretical findings. Finally, Section~\ref{ScCon} presents the conclusions and discusses the direction of future research.
	
		\vspace{-3mm}
	
	\textsl{Notations:}
	$\mathbb{R}^n$ and $\mathbb{R}^{n\times n}$ represent the set of n-dimensional vectors, and $n\times n$ real matrices, respectively.  $\mathbf{I}\in \mathbb{R}^{n\times n}$ denotes the identity matrix.  $Id$ represents the identity operator in $\mathbb{R}^n$. $[n]$ denotes the set $\{1,\ldots,n\}$ for any given positive integer $n$. $\mathbf{1}_n$ denotes the $n$-dimensional vector with elements being all ones. $e_i^n$ denotes the $n$-dimensional base vector where the $i$-th element is one and all other elements are zero. For any vector $x\in \mathbb{R}^n$, $\lfloor x \rfloor$ represents the element-wise floor function. Given any vectors $x,y\in \mathbb{R}^n$, $\left <x ,y \right>$ is the inner product of $x$ and $y$. And $\| x\|$ is the standard Euclidean norm of $x$. For a matrix $A \in \mathbb{R}^{n\times n}$, $\|A\|_{\text{F}}$ and $\|A\|_2$ denote its Frobenius norm and spectral norm, respectively. Given an operator $\mathcal{T}:\mathbb{R}^n\rightarrow \mathbb{R}^n$, $\text{Fix}(\mathcal{T})=\{x\in \mathbb{R}^n|\mathcal{T}(x)=x\}$ represents the set of fixed points of operator $\mathcal{T}$. $\int_{\gamma_{yx}} \mathcal{T}(u)du$ denotes the path integral of $\mathcal{T}$ along the flat curve $\gamma_{yx}$ which starts from $y$ and ends at $x$. $\otimes$ is the Kronecker product.  \vspace{-3mm}

	\section{Preliminaries and Problem Formulation}\label{s2} \vspace{-3mm}
	In this section, we first introduce some preliminary knowledge, and then present the formulation of the  distributed sum-separable operators along with the related standard assumptions, and finally state the communication-efficient schemes. 				\vspace{-3mm}
	 
	\subsection{Graph Theory} \vspace{-2mm}
	Let $\mathscr{G}=\{[N],\mathcal{E}\}$ represent the weight graph, where $[N]$ denotes the set of agents and $\mathcal{E}\subseteq [N]\times [N]$ represents the edges set. The path is defined by a sequence of edges $(i_1,i_2),(i_2,i_3),\ldots,(i_{k-1},i_{k})$. Then, $\mathscr{G}$ is said to be strongly connected if there exists a  path between any two agents. Define $\mathcal{N}_{i}^{\text{out}}=\{j\in[N]|(i,j)\in \mathcal{E} \}$ and $\mathcal{N}_{i}^{\text{in}}=\{j \in[N] \mid(j, i) \in  \mathcal{E} \}$ as the sets of out- and in-neighbors of agent $i$, respectively. Define $\mathbf{W}=[w_{ij}]\in \mathbb{R}^{N\times N}$ as the mixing matrix associated to  graph $\mathscr{G}$, where the element $w_{ij}>0$ if $(i,j)\in \mathcal{E}$ and $w_{ij}=0$ otherwise. The mixing matrix $\mathbf{W}$ is said to be doubly stochastic if $\mathbf{W}\mathbf{1}_N=\mathbf{W}^\top\mathbf{1}_N=\mathbf{1}_N$. To facilitate the subsequent analysis, let $\alpha=\|\mathbf{I}-\mathbf{W}\|_2$. Define $\lambda_1(\mathbf{W})\ge \lambda_2(\mathbf{W})\ge \cdots \ge \lambda_N(\mathbf{W})$ as the eigenvalues of $\mathbf{W}$. Let the spectral gap be $\kappa= 1-\max \{|\lambda_2(\mathbf{W})|,|\lambda_N(\mathbf{W})|\}$. Based on~\cite{CaoAuto,Jueyou}, it holds that $0 <\kappa \le 1$ and $1-\kappa =\|\mathbf{W}-\frac{1}{N}\mathbf{1}_N\mathbf{1}_N^\top\|$.

	\vspace{-3mm}
	\subsection{Problem Formulation}\label{s2.B} \vspace{-3mm}
	Consider a distributed sum-separable global operator over the network of $N$ agents as  \vspace{-3mm}
	\begin{align}\label{F0}	
		\mathcal{T}(x)=\frac{1}{N}\sum_{i=1}^{N} \mathcal{T}_i(x),
	\end{align}
	where $\mathcal{T}:\mathbb{R}^n\rightarrow \mathbb{R}^n$ is the global operator and  $\mathcal{T}_i:\mathbb{R}^n\rightarrow \mathbb{R}^n$ denotes the local operator privately accessible only by agent $i\in [N]$. The communication network is modeled by a graph $\mathscr{G}=\{[N],\mathcal{E}\}$. Our main aim is to develop a framework of distributed algorithms for seeking the fixed points of operator $\mathcal{T}$, i.e.,
	\begin{align}\label{F00}
		\text{Find}\quad x^*\in \text{Fix}(\mathcal{T}).
	\end{align}
	\vspace{-7mm}

	Each local operator $\mathcal{T}_i$ is allowed to have the stochastic representation as 
	\begin{align}\label{F1}
		\mathcal{T}_i(x)=\mathbb{E}_{\xi_{i}}[\hat{\mathcal{T}}_i(x,\xi_i)]=\int_{\Omega_i}\hat{\mathcal{T}}_i(x,\xi_{i})d\mathbb{P}_i,
	\end{align}
	where $\hat{\mathcal{T}}_i(x,\cdot)$ is Lebesgue integrable for $x\in \mathbb{R}^n$ and $\xi_i: \Omega_i \rightarrow \mathbb{R}^n$ is a random vector defined on a probability space $(\Omega_i,\mathcal{F}_i,\mathbb{P}_i)$. In our distributed setting, distributions $\{\mathbb{P}_i\}_{i\in [N]}$ do not have to be identical, and are permitted to be heterogeneous. Then we have 
	\begin{align*}
		\mathcal{T}(x)=\mathbb{E}_{\xi}[\hat{\mathcal{T}}(x,\xi)],
	\end{align*}
	where $\xi=(\xi_1,\ldots,\xi_N)$ and $\hat{\mathcal{T}}=\frac{1}{N}\sum_{i=1}^{N}\hat{\mathcal{T}}_i$. Similar to \cite{XiuAuto,DOT}, we assume that the set of fixed points Fix($\mathcal{T}$) is nonempty. The operator $\mathcal{T}$ is said to be $L$-Lipschitz continuous if it holds $\|\mathcal{T}(x)-\mathcal{T}(y)\| \leq L\|x-y\|, \forall x, y \in \mathbb{R}^n$ with a constant $L > 0$. $\mathcal{T}$ is termed a contractive operator if $L<1$, and it is termed a non-contractive operator if $L \geqslant 1$ which includes the non-expansive case with $L=1$ and the expansive case with $L>1$. Besides, in the non-contractive case, the operator may have multiple fixed points, and when the operator is contractive, the fixed point is unique by Banach's theorem~\citep{BB1952}. Similar to~\cite{PointTac}, we focus on the class of operators where $\mathcal{T}_{i}$ is a conservative vector field. Our main concern is to find a fixed point $x^*\in \text{Fix}(\mathcal{T})$  in a distributed manner. 				\vspace{-3mm}
	
	\begin{remark}
		The distributed sum-separable structure in (\ref{F0}) is first proposed in~\cite{XiuAuto} and does not necessitate the existence of the common fixed point for all local operators $\mathcal{T}_i$, which distinguishes it from the distributed common  fixed point finding problem in~\cite{for2,for3,XiuTAC}. In fact, when the local operators in~(\ref{F0}) are restricted to be non-expansive as in~\cite{for2,for3,XiuTAC}, the existence of a common fixed point guarantees that $\text{Fix}(\mathcal{T})=\text{Fix}(\frac{1}{N}\sum_{i=1}^{N}\mathcal{T}_i)=\bigcap_{i=1}^N{\text{Fix}(\mathcal{T}_i)}$ according to~\cite[Proposition 4.47]{book1}. Under such specific conditions, the common fixed point problem in~\cite{for2,for3,XiuTAC} essentially degenerates into a special case of problem~(\ref{F00}). 
		It is noteworthy that requiring the existence of common fixed points is equivalent to demanding all local cost functions to reach optimum at the same point in distributed optimization~\citep{TAC22,Pushi,XinleiTAC,FanTcns,Jueyou}, which is impractical in many applications. As a more general framework, problem (\ref{F00}) can characterize a multitude of interesting problems including distributed algebraic equations and distributed optimization. For instance, similar to~\cite{XiuAuto}, by introducing $\mathcal{T}_i=Id-\tau \nabla \varUpsilon_i$ with some $\tau >0$, problem (\ref{F00}) reduces to the distributed optimization problem in~\cite{Jueyou,AdaOnline}, where $\varUpsilon_i$ is the local cost function of agent $i$.
	\end{remark}

			\vspace{-2mm}
\subsection{Standard Assumptions}		\vspace{-2mm}
	We hereby impose some standing assumptions for the aforementioned distributed fixed point seeking problem.		\vspace{-4mm}
	\begin{assumption}\label{ass5.Un}
		The underlying communication graph $\mathscr{G}$ is strongly connected and its mixing matrix $\mathbf{W}$ is doubly stochastic.
	\end{assumption}
	\vspace{-3mm}
	\begin{assumption}\label{ass1.LL}
		The operator $\mathcal{T}_i$ is $L$-Lipschitz continuous with constant $L>0$, that is, for any $x,y \in \mathbb{R}^n$, $\rVert \mathcal{T}_i(x)-\mathcal{T}_i(y) \lVert\le L\rVert x-y \lVert $.
	\end{assumption}
	In the subsequent section, we focus on both the general non-contractive case involving expansive operators and the contractive case.
	\begin{remark}
		Note that Assumption \ref{ass1.LL} is rather relaxed, requiring only operators to be Lipschitz continuous. It naturally includes the special case of contractive ($L<1$) and non-expansive ($L=1$) operators in~\cite{locTac,XiuAuto,DOT,FanTcns}. Although the non-expansive condition widely adopted in the literature is sufficient to model fairly general convex optimization problems, the case where $L>1$, i.e., the expansive operator contained in Assumption~\ref{ass1.LL}, remains of great significance. Indeed, for any $x,y \in \mathbb{R}^n$ and some $m$-smooth convex function $\varUpsilon_i:\mathbb{R}^n \rightarrow \mathbb{R}$, set operator $\mathcal{T}_i= Id-\tau \nabla \varUpsilon_i$ with positive constant $\tau$,  we have 
		\begin{align}\label{TTxy}
			&\lVert \mathcal{T}_i\left( x \right) -\mathcal{T}_i\left( y \right) \rVert ^2\nonumber\\
			&= \lVert x-y \rVert ^2-2\tau \left < x-y,\nabla \varUpsilon_i \left( x \right) -\nabla \varUpsilon_i \left( y \right) \right >\nonumber\\
			&\quad +\tau ^2\lVert \nabla \varUpsilon_i \left( x \right) -\nabla \varUpsilon_i \left( y \right) \rVert ^2\nonumber\\
			&\le \lVert x-y \rVert ^2+(\tau ^2-\frac{2\tau}{m})\lVert \nabla \varUpsilon_i \left( x \right) -\nabla \varUpsilon_i \left( y \right) \rVert ^2\nonumber\\
			&\le \max \left\{1, 1-2\tau m+\tau ^2m^2  \right\} \lVert x-y \rVert^2,
		\end{align}
		where the second inequality leverages the $\frac{1}{m}$ co-coercive property of $\nabla \varUpsilon_i$. Hence, it follows that $\mathcal{T}_i$ is non-expansive for constant $\tau \in (0,\frac{2}{m}]$. Further, if $\varUpsilon_i$ is $\mu$-strongly convex, then for the corresponding operator $\mathcal{T}_i$, based on the first inequality of  (\ref{TTxy}), we have 
		\begin{align}
			\lVert \mathcal{T}_i\left( x \right) -\mathcal{T}_i\left( y \right) \rVert ^2\le \left( 1-2\tau \mu+\tau ^2m^2  \right)\lVert x-y \rVert ^2.
		\end{align}
		Consequently, when constant $\tau \in (0, \frac{2\mu}{m^2})$, $\mathcal{T}_i$ is contractive.
		Nevertheless, when the cost function $\varUpsilon_i$ is non-convex, even if it satisfies the Polyak--{\L}ojasiewicz condition, the corresponding operator $\mathcal{T}_i$ fails to satisfy the non-expansive condition as shown in~\cite{XiuAuto,DOT,FanTcns}, implying the inability to solve non-convex optimization problems leveraging the fixed point seeking algorithms in~\cite{XiuAuto,DOT,FanTcns}.  For instance, consider a simple non-convex function $\varUpsilon_i(x)=x_1^2-x_2^2$ with $x=(x_1,x_2)\in \mathbb{R}^2$, its corresponding operator $\mathcal{T}_i(x)=((1-2\tau )x_1, (1+2\tau)x_2)$ has the Lipschitz constant $L=\max \{|1-2\tau|,|1+2\tau |\}>1$ for any $\tau >0$. It should be emphasized that the investigation of expansive operators is relatively scarce in both distributed and centralized situations.
	\end{remark}
		\vspace{-3mm}
	\begin{assumption}\label{ass.heteroge}
		For local operators $\{ \mathcal{T}_i \}_{i\in [N]}$, there exists a constant $\zeta \geq 0$ such that 
		\begin{align}
			\frac{1}{N}\sum_{i=1}^N \|\mathcal{T}_i(x) - \mathcal{T}(x)\|^2 \leq \zeta^2, \quad \forall x \in \mathbb{R}^n. \label{eq:operator_heterogeneity}
		\end{align}
	\end{assumption}
		\vspace{-5mm}
	It is well known that computing multidimensional integral (\ref{F1}) with possibly unknown distribution $\mathbb{P}_i$ exactly or with high precision is intractable. Besides, while the operator $\mathcal{T}_i$ is the exact expectation of $\hat{\mathcal{T}}_i$, in practice, agents may only have access to a biased oracle, denoted by $\widetilde{\mathcal{T}}_i(x, \xi_i)$. To this end, similar to the stochastic approximate technique adopted in~\cite{PointTac,LanGuan,ICMLBias}, we make the following assumption to obtain a general form of biased operator estimators.
		\vspace{-3mm}
	\begin{assumption}\label{ass2.YY}
		For each operator $\mathcal{T}_i$, there exists a stochastic oracle that provides a sampled operator $\widetilde{\mathcal{T}}_i(x,\xi_i)$ given $x,\xi_i$ as the noisy evaluation of $\mathcal{T}_i$. And it satisfies that 
		\begin{align*}
			\left\|\mathbb{E}_{\xi_i}[{\widetilde{\mathcal{T}}}_i(x, \xi_i)]-\mathcal{T}_i(x)\right\|^2 &\leq \beta^2+P\|\mathcal{T}_i(x)-x\|^2, \\
			\mathbb{E}_{\xi_i}\|\widetilde{\mathcal{T}}_i(x, \xi_i)-\mathcal{T}_i(x)\|^2 &\leq \sigma^2+M\|\mathcal{T}_i(x)-x\|^2,
		\end{align*}
		for some constants $P<1$ and $M, \beta, \sigma \geq 0$.
	\end{assumption}
		\vspace{-3mm}
	\begin{remark}
		It is easy to see that the growth conditions in Assumption~\ref{ass2.YY} are much less restrictive than the unbiased and bounded additive variance assumptions typically adopted in stochastic optimization~\citep{KaiHigh,TAC22,YuanAuto,Jueyou,Yongqiang2023,XinleiAuto}, which merely correspond to a restrictive special case of our framework with $P=0$, $\beta=0$ meaning unbiasedness, and $M=0$ meaning only additive variance.  Here, $\beta$ and $\sigma$ represent the constant bias and variance, while $P$ and $M$ flexibly  control the state-dependent growth of bias and variance, respectively.	
		Similar to~\cite{Lei13,PointTac}, the condition $P<1$ is necessary to ensure that the state-dependent bias diminishes sufficiently fast relative to the progress of the iterates, preventing divergence.
	\end{remark}
		\vspace{-3mm}
	\begin{remark}
		Abundant examples can be viewed as special cases of the biased stochastic framework given in 	Assumption~\ref{ass2.YY}. An important example of the biased oracle is the zeroth-order gradient estimator obtained through Gaussian smoothing~\citep{freeproject,Jueyou}. It is valuable for black-box model optimization without direct gradient access or high-dimensional models with costly gradient computation due to large parameter scales.
		Given a $m$-smooth function $\varUpsilon:\mathbb{R}^n\rightarrow \mathbb{R}$, the 2-point zeroth-order estimator~\citep{Jueyou,XinleiAuto} of $\nabla \varUpsilon(x)$ is defined as 				\vspace{-3mm}
		\begin{align}\label{ZePon}
			G_{\varUpsilon}(x,u)=\frac{\varUpsilon(x+zu)-\varUpsilon(x)}{z}u,
		\end{align}
		where $z>0$ is the smoothing radius parameter, $u$ is a random vector sampled from the distribution $\mathcal{U}(\mathbb{S}^{n-1})$, and unit sphere $\mathbb{S}^{n-1}=\{x\in \mathbb{R}^n|\|x\|=1\}$. According to~\cite[Lemma 3 and Theorem 4]{freeproject}, it can be concluded that (\ref{ZePon}) is a biased estimate of $\nabla \varUpsilon(x)$ and Assumption \ref{ass2.YY} holds with $\beta^2=\tau ^2{z^2}m^2(n+3)^2/{4}$, $P=0$, $\sigma^2=3\tau ^2z^2m^2(n+4)^3$ and $M=4(n+4)$, where $\mathcal{T}(x)=x-\tau \nabla \varUpsilon (x)$, $\widetilde{\mathcal{T}}(x,\xi)=x-\tau G_{\varUpsilon}(x,u)$ and $\tau$ is the stepsize parameter. Moreover, as a direct application of operator theory, stochastic optimization often encounters biased gradient estimates in various settings. Examples include stochastic algorithms with state-dependent sampling distributions~\citep{Sto11}, e.g., sampling from Markov decision processes~\citep{Sto22,Sto33} and random reshuffling sampling procedures~\citep{Sto44,Sto55}. 
	\end{remark}
		\vspace{-3mm}
	\begin{assumption}\label{ass4.YJ}
		There exists a positive constant $\mathcal{D}$ such that $\mathbb{E}_{\xi_i} \left[\left \|\widetilde{\mathcal{T}}_i(x,\xi_{i})-x\right \|^2 \right]\le \mathcal{D}^2$ for any $x\in \mathbb{R}^n$.
	\end{assumption}
		\vspace{-3mm}
	This assumption essentially bounds the expected squared distance between the stochastic operator's evaluation and the current state. 	In the context of optimization where $\mathcal{T}_i(x)=x-\tau \nabla \varUpsilon_i(x)$, it is equivalent to the bounded stochastic gradient assumption adopted in~\cite{XiuAuto,Zeng11}.

			\vspace{-2mm}
	\subsection{Communication-Efficient Scheme}\label{s2.C}		\vspace{-2mm}
	To alleviate the substantial communication overhead inherent in distributed algorithms, we pursue efficiency from two complementary perspectives. We employ communication compression to reduce the volume of transmitted data and introduce a dynamic period skipping mechanism to decrease the transmission frequency.
	
	To implement the data volume reduction, we consider a general class of unified compressors accommodating both relative and absolute errors. This scheme satisfies the following condition.
			\vspace{-3mm}
	\begin{assumption}\label{ass3.YA}
		The compressor $\mathcal{C}:\mathbb{R}^n \rightarrow \mathbb{R}^n$
		satisfies		\vspace{-3mm}
		\begin{align}\label{222}
			\mathbb{E}_{\mathcal{C}}\left[\left\|\frac{\mathcal{C}(x)}{r}-x\right\|^2\right] \leq(1-\varphi )\|x\|^2+\delta^2, \forall x \in \mathbb{R}^n,
		\end{align}
		for some constants $r>0,\varphi\in (0,1]$ and $\delta\ge 0$. Here, $\mathbb{E}_{\mathcal{C}}$ denotes the expectation over $\mathcal{C}$. 
	\end{assumption}

	The unified compression scheme introduced in (\ref{222}) accommodates simultaneously relative and absolute compression errors, enhancing both practical utility and theoretical generality compared to prior works~\citep{Jueyou,CaoAuto,TAC22,XinleiTAC,ZhangjiaqiTAC} limited to single-type compression errors. This framework supports broader compression techniques. For instance, it reduces to the standard relative error compressor  in~\cite{Jueyou,CaoAuto,TAC22} when $\delta=0$. If $\varphi=1,$ i.e., there is no relative error, then it degenerates to the compressor with absolute errors in~\cite{XinleiTAC,ZhangjiaqiTAC}. Crucially, Assumption \ref{ass3.YA} not only eliminates conventional requirements for unbiasedness in relative error compression~\cite{Houcsl,Yongqiang2023}
	but also removes the necessity for contractivity in compressors~\citep{Jueyou,CaoAuto,TAC22,ZhangjiaqiTAC}. To see this, consider the case of $\delta=0$ as in ~\cite{Jueyou,CaoAuto,TAC22,ZhangjiaqiTAC}, for $ x\in \mathbb{R}^n$, the following can be derived from (\ref{222})
			\vspace{-6mm}
	\begin{align}\label{SJ}
		&\mathbb{E}_{\mathcal{C}}\left[\|\mathcal{C}(x)-x\|^2\right]=\mathbb{E}_{\mathcal{C}}\left[\left \|r\left(\frac{\mathcal{C}(x)}{r}-x\right )  +(r-1)x\right \|^2\right]\nonumber\\
		&\le 2r^2\mathbb{E}_{\mathcal{C}}\left[\left \|\frac{\mathcal{C}(x)}{r}-x\right \|^2\right]+2(1-r)^2\|x\|^2\nonumber\\
		&\le  \bar{r}\|x\|^2,
	\end{align}
	where $\bar{r}=2r^2(1-\varphi )+2(1-r)^2$. Evidently, compared to~\cite{Jueyou,CaoAuto,TAC22,ZhangjiaqiTAC}, which require $\mathcal{C}$ to be contractive~(i.e., $\bar{r}<1$),~(\ref{SJ}) imposes no such restrictions. 
	
	To further curtail communication complexity, we additionally implement a dynamic period skipping mechanism that divides the total iterations into variable-sized blocks. The algorithmic details of this frequency reduction strategy will be systematically presented in Section~\ref{SecRul}.

	\vspace{-4mm}
	\section{Main result}\label{SecRul} 		\vspace{-3mm}
	In this section, we first design the communication-efficient distributed algorithm for finding fixed points and then state the corresponding convergence analysis results.
		\vspace{-3mm}
	\subsection{Algorithm Design}
			\vspace{-3mm}
	Before giving a concrete description of the algorithm, it is necessary to emphasize that the majority of the works related to fixed points seeking concern how to deliver theoretical insights into optimization algorithms from the perspective of the operator framework. Whereas in this paper confronted with more general biased stochastic operators even permitted to be expansive, it is from a complementary viewpoint that we leverage advanced stochastic optimization theory to investigate fixed point iteration algorithms. 
	
	We first review the inexact Krasnosel'ski\u{\i}--Mann (KM)  iteration~\citep{PointTac} of the form
	\begin{align}\label{KKM}
		x^{t+1}=\left(1-\eta_t\right) x^t+\eta_t \widetilde{\mathcal{T}}\left(x^t, \xi^t\right),
	\end{align}
	where $\left\{\eta_t\right\}$ is a sequence of relaxation parameters with $\eta_t \in(0,1)$. This is a classical centralized algorithm capable of seeking the fixed points in the presence of random operator noise, which can be viewed as the generalization of the method in~\cite{KM1955} for non-expansive operators.

	\begin{algorithm}[H]
		\caption{Communication-Efficient Distributed Fixed Point Iteration} \label{alg.1}
		\begin{algorithmic}[1]
		   \State\textbf{Require:}{Initialize $x_i^0=\hat{x}_i^0=0, i\in [N]$; Choose suitable step sizes $\eta_t$, $\gamma$, $\psi$ and scaling parameter $s_t$;}
			\For{$t=0,1,\cdots,T$}
			\For{$i=1,2,\cdots,N$}
			\State Sample $\xi_i^{t}$ and compute $\widetilde{\mathcal{T}}_i(x_i^{t}, \xi_i^{t})$.
			\State $z_i^{t+1} = (1- \eta_t)x_i^{t} + \eta_t \widetilde{\mathcal{T}}_i(x_i^{t},\xi_i^{t})$.
			\If{$t+1 \in \mathcal{I}_T$}
			\State $x_i^{t+1} = z_i^{t+1} + \gamma \sum\limits_{j=1}^N w_{ij} (\hat{x}_j^{t} - \hat{x}_i^{t} )$.
			\State Compute $c_i^{t} := \mathcal{C}((x_i^{t+1} - \hat{x}_i^{t} )/s_t)$.
			\State Send $c_{i}^t$ to all neighbors $j\in\mathcal{N}_i^{\text{out}}$.
			\State Receive $c_{j}^t$ from all neighbors $j\in\mathcal{N}_i^{\text{in}}$.
			\State Update the auxiliary vector:
			\State $\hat{x}_j^{t+1}=\hat{x}_j^{t} +\psi s_tc_j^{t}, \quad j\in \mathcal{N}_i^{\text{in}}\cup\{i\}$.
			\Else
			\State $x_i^{t+1} = z_i^{t+1}$, $\hat{x}_j^{t+1} = \hat{x}_j^{t}$, $j\in\mathcal{N}_i^{\text{in}}\cup\{i\}$.
			\EndIf
			\EndFor
			\EndFor
			\State\textbf{Output:} $\{x_i^T\}$
		\end{algorithmic}
	\end{algorithm}

		\vspace{-3mm}
	
	Now, invoking (\ref{KKM}), we are in the position of presenting the communication-efficient distributed 	algorithm with compressed and dynamic period skipping communication. The pseudo-code is presented in Algorithm~\ref{alg.1} from the perspective of agent $i$. Therein, each agent maintains and updates local state variable $x_i^t$ via line 5 and line 7 for seeking the fixed point $x^*$. Meanwhile, an estimate $\hat{x}_i^t$ of $x_i^t$ is also maintained at each neighbor $j \in \mathcal{N}_i$ and at $i$ itself. At each time $t$, following the local KM iteration in line 5, if $t+1 \in \mathcal{I}_{T}$, agent $i$ executes the consensus step in line 7, and sends the compressed difference between its local state and its estimate to all out-neighbors via line 8 and line 9.  Then based on the compressed messages received from its neighbors, agent $i$ updates $\hat{x}_j^t$ via line 12. If $t+1 \notin \mathcal{I}_T$, agents will not communicate with neighbors.
	
		\vspace{-3mm}
	
	In the proposed distributed algorithm, the agents are not expected to transmit information with their neighbors at every iteration. Rather, communication is skipped during certain time intervals. In order to provide a comprehensive description of the dynamic period skipping communication  mechanism, let $\mathcal{I}_{T} \subseteq [T]$ signify the set of indices at which agents engage in interactions. Define $\mathcal{I}_T=\{\mathcal{I}_{(1)},\mathcal{I}_{(2)},\ldots, \mathcal{I}_{(k)},\ldots\}$ with $\mathcal{I}_{(1)}=1$ and gap($\mathcal{I}_T$):=${\max}_{l \in [k-1]}\{\mathcal{I}_{(l+1)}-\mathcal{I}_{(l)}\} \le \mathcal{H}$ which is used to assess the maximum number of local iterations between two communication moments, and $\mathcal{H}$ can represent the maximum communication interval. The communication intervals are non-fixed, meaning agents employ dynamic period communication scheduling. If $\text{gap}(\mathcal{I}_T)=1$, then it reduces to a standard distributed setup with per-iteration communication. Increasing $\mathcal{H}$ leads to less frequent communication. Notably, the communication skipping mechanism exhibits conceptual parallels with local update steps in~\cite{LocalJ,LocalSun}, where the communications skipped and omitted between $\mathcal{I}_{(t)}$ and $\mathcal{I}_{(t+1)}$ in Algorithm~\ref{alg.1} essentially correspond to performing the cumulative local update $\sum_{t'=\mathcal{I}_{(t)}}^{\mathcal{I}_{(t+1)}-1}{x_i^{t'}-\widetilde{\mathcal{T}}_i^{t'}\left( x_{i}^{t'},\xi _{i}^{t'} \right)}$. Here, the interval $\mathcal{I}_{(t+1)}-\mathcal{I}_{(t)}$ serves an analogous role to the number of additional local updates in these existing methods. While both approaches aim to reduce communication frequency, their implementations differ fundamentally. Specifically, unlike the fixed interval local updates required by the aforementioned studies, our communication skipping mechanism enables dynamic interval adaptation based on network conditions or model variations (e.g., shorter intervals during early iterations for accelerated convergence, longer intervals later to save communication), enhancing flexibility in balancing communication and computation costs. Meanwhile, Algorithm~\ref{alg.1} further integrates compression to reduce data volume per transmission, thereby further improving overall communication efficiency.

		\vspace{-3mm}

	It is worthwhile to mention that compressor $\mathcal{C}$ satisfying Assumption~\ref{ass3.YA} is characterized by a combination of both relative and absolute compression errors. Correspondingly, in an attempt to avoid the possible divergence associated with the absolute error term $\delta$, we employ the dynamic scaling compression technique~\citep{Quant,Pushi,XinleiTAC} in Algorithm~\ref{alg.1}.
	The cumulative effects of arbitrarily large absolute compression errors are effectively mitigated by utilizing the exponentially decaying dynamic parameter $s_t$. To see this, define $y_i^t=x_i^{t+1}-\hat{x}_i^t$ and we derive that 
			\vspace{-3mm}
	\begin{align*}
		\mathbb{E}_{\mathcal{C}}\left[\left\|\frac{\mathcal{Q}(y_i^t)}{r}-y_i^t\right \|^2 \right]=&s_t^2\mathbb{E}_{\mathcal{C}}\left[\left\| \frac{\mathcal{C}(y_i^t/s_t)}{r} -\frac{y_i^t}{s_t}\right \|^2\right]\nonumber\\
		\le&s_t^2\left ((1-\varphi)\left\| y_i^t/s_t\right\|^2+\delta^2\right)\nonumber\\
		=&(1-\varphi)\left\| y_i^t\right\|^2+s_t^2\delta^2,
	\end{align*}
	where $\mathcal{Q}(y_i^t)=s_t\mathcal{C}(y_i^t/s_t)$ can be viewed as the scaled version of compressor $\mathcal{C}$. Therefore, by decaying the parameter $s_t$, the absolute error can be controlled.
	
		\vspace{-2mm}
	\subsection{Surrogate function}\label{SecRul.B}
			\vspace{-2mm}
	To facilitate the subsequent convergence analysis,  we establish the connection between stochastic operator and stochastic optimization theory by introducing the surrogate function.
	
		\vspace{-3mm}
	
	In view of the Lebesgue integrability of operator $\mathcal{T}$, define the surrogate function of $\mathcal{T}$ as 
	\begin{align}\label{Zf}
		\mathcal{G}(x):=\int_{\gamma_{yx}} u-\mathcal{T}(u)du,
	\end{align}
	where $\gamma_{yx}$ is a continuous flat curve originating at some point $y$ and terminating at point $x$. Combined with Assumption~\ref{ass1.LL}, it is easy to verify that $\nabla \mathcal{G}(x)=x-\mathcal{T}(x)$ is $(1+L)$-Lipschitz continuous. The bridge to the optimization theory is built by the surrogate function $\mathcal{G}$, which is essential in the subsequent analysis. To this end, define an optimization problem $\underset{x\in \mathbb{R}^n}{\min} \mathcal{G}(x)$.
	Appealing to $\nabla \mathcal{G}(x)$, it can be concluded that the first-order optimality condition of (\ref{Zf}) can be recast as 
	\begin{align*}
		\left\langle\nabla \mathcal{G}\left(\tilde{x}^*\right), \tilde{x}^*-x\right\rangle \leq 0, \quad \text { for all } x \in \mathbb{R}^n,
	\end{align*}
	where $\tilde{x}^*$ is committed both to a first-order solution of (\ref{Zf}) and to a fixed point of $\mathcal{T}$. Based on~\cite[Proposition 17.10 and Theorem 27.1]{book1}, the  first-order stationary points of (\ref{Zf}) construct a subset of the fixed points of $\mathcal{T}$ and further when the operator $\mathcal{T}$ is non-expansive, the two are equivalent. 
	Evidently, given the definition of $\mathcal{T}$, the function $\mathcal{G}$ can be formalized as 
	\begin{align*}
		\mathcal{G}(x)=\frac{1}{N}\sum_{i=1}^{N}\mathcal{G}_i(x), \quad \mathcal{G}_i(x)=\mathbb{E}_{\xi_i}[\hat{\mathcal{G}}_i(x,\xi_i)],
	\end{align*}
	where $\hat{\mathcal{G}}_i(x,\xi_{i})=\int_{\gamma_{yx}}u-\hat{\mathcal{T}}_i(u,\xi_i)du$.
	
		\vspace{-3mm}
	
	As a consequence, by introducing surrogate function $\mathcal{G}_i$, it is enabled to explore the analysis of fixed point iteration algorithms for stochastic operators leveraging the biased stochastic optimization theory. Based on~\cite{PointTac}, in the following lemma, some useful properties of the surrogate function are sketched by simple derivation of the features of $\mathcal{T}_i$.

			\vspace{-3mm}
	\begin{lemma}\label{lemm.aa} (Proposition 1 in \cite{PointTac})
		The surrogate function $\mathcal{G}_i$ is $(1+L)$-smooth and thus satisfies
		\begin{enumerate}
			\item[(a)] $\mathcal{G}_i(y) \leq \mathcal{G}_i(x) + \langle\nabla \mathcal{G}_i(x), y-x\rangle + \frac{1+L}{2}\|x-y\|^2$,
		\end{enumerate}
		for all $x, y \in \mathbb{R}^n$. Additionally, if $L < 1$, then $\mathcal{G}_i$ is $(1-L)$-strongly convex and thus satisfies
		\begin{enumerate}
			\item[(b)] $\|\mathcal{T}_i(x)-x\|^2 \geq 2(1-L)[\mathcal{G}_i(x)-\mathcal{G}_i(x_i^*)]$,
			\item[(c)] $\mathcal{G}_i(x)-\mathcal{G}_i(x_i^*) \geq \frac{1-L}{2}\|x-x_i^*\|^2$,
		\end{enumerate}
		for all $x \in \mathbb{R}^n$, where $x_i^*$ is the fixed point of $\mathcal{T}_i$.
	\end{lemma}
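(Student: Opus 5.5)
The plan is to work directly from the definition $\mathcal{G}_i(x)=\int_{\gamma_{yx}} u-\mathcal{T}_i(u)\,du$. Since $\mathcal{T}_i$ is a conservative vector field, this path integral does not depend on the curve chosen, so $\mathcal{G}_i$ is a well-defined $C^1$ function with $\nabla\mathcal{G}_i(x)=x-\mathcal{T}_i(x)$. By Assumption~\ref{ass1.LL},
\begin{align*}
\|\nabla\mathcal{G}_i(x)-\nabla\mathcal{G}_i(y)\| \le \|x-y\|+\|\mathcal{T}_i(x)-\mathcal{T}_i(y)\| \le (1+L)\|x-y\|,
\end{align*}
so $\mathcal{G}_i$ is $(1+L)$-smooth. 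For (a), apply the fundamental theorem of calculus along the segment $x+s(y-x)$, $s\in[0,1]$:
\begin{align*}
\mathcal{G}_i(y)-\mathcal{G}_i(x)-\langle\nabla\mathcal{G}_i(x),y-x\rangle=\int_0^1\langle\nabla\mathcal{G}_i(x+s(y-x))-\nabla\mathcal{G}_i(x),y-x\rangle ds.
\end{align*}
Bounding the integrand by Cauchy--Schwarz and the Lipschitz estimate gives $\int_0^1 (1+L)s\|y-x\|^2ds=\frac{1+L}{2}\|x-y\|^2$.

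For $L<1$, strong monotonicity of the gradient comes first:
\begin{align*}
\langle\nabla\mathcal{G}_i(x)-\nabla\mathcal{G}_i(y),x-y\rangle=\|x-y\|^2-\langle\mathcal{T}_i(x)-\mathcal{T}_i(y),x-y\rangle\ge(1-L)\|x-y\|^2,
\end{align*}
again by Cauchy--Schwarz and Assumption~\ref{ass1.LL}. The same segment-integration argument then yields the strong convexity inequality $\mathcal{G}_i(y)\ge\mathcal{G}_i(x)+\langle\nabla\mathcal{G}_i(x),y-x\rangle+\frac{1-L}{2}\|y-x\|^2$.

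Since $L<1$, $\mathcal{T}_i$ is a contraction, and Banach's theorem gives a unique fixed point $x_i^*$. At this point $\nabla\mathcal{G}_i(x_i^*)=0$, so $x_i^*$ is the global minimizer of $\mathcal{G}_i$.
\begin{itemize}
\item For (c), set $x=x_i^*$ in the strong convexity inequality; the gradient term vanishes.
\item For (b), minimize the right-hand side of the strong convexity inequality over $y$. The minimum is $\mathcal{G}_i(x)-\frac{1}{2(1-L)}\|\nabla\mathcal{G}_i(x)\|^2$, and it is at most $\mathcal{G}_i(x_i^*)$. Using $\|\nabla\mathcal{G}_i(x)\|=\|\mathcal{T}_i(x)-x\|$ gives the Polyak--{\L}ojasiewicz form in (b).
\end{itemize}

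The only real obstacle is justifying that $\mathcal{G}_i$ is well defined and differentiable with the stated gradient. This is exactly where the conservative-field hypothesis enters: it makes the integral path-independent, and then the standard gradient theorem applies because $\mathcal{T}_i$ is continuous. Everything after that is routine convex-analysis calculus.
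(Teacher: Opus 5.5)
Your proof is correct. The conservative-field assumption gives $\nabla\mathcal{G}_i=Id-\mathcal{T}_i$, and its $(1+L)$-Lipschitz bound yields (a); for $L<1$, the $(1-L)$-strong monotonicity gives strong convexity, from which (c) follows at the unique Banach fixed point and (b) follows by minimizing the strong-convexity lower bound. The paper gives no proof of its own: it imports the result as Proposition 1 of the cited reference, calling it a simple derivation from the properties of $\mathcal{T}_i$, and your argument is that standard derivation; the one step left implicit is that your definition of $\mathcal{G}_i$ through $\mathcal{T}_i$ coincides with the paper's $\mathbb{E}_{\xi_i}[\hat{\mathcal{G}}_i(x,\xi_i)]$ after interchanging the expectation and the path integral.
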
 
			\vspace{-3mm}
	\subsection{Convergence Analysis}\label{SecRul.C}		\vspace{-3mm}
	Before presenting the convergence results, we highlight the methodological role of the surrogate function~(\ref{Zf}). Specifically, tracking the trajectory of general non-contractive operators is analytically intractable. Instead, our framework leverages the expected function~(\ref{Zf}) as a generalized storage function. By rigorously invoking the connection $\nabla \mathcal{G}(x)=x-\mathcal{T}$, the surrogate function $\mathcal{G}$ acts as the fundamental theoretical bridge that makes the analysis of non-contractive operators viable. 
	
		\vspace{-3mm}
	
	We are now ready to present the main result of the general non-contractive case.
	\begin{theorem}\label{Theo1}
		Suppose that Assumptions \ref{ass5.Un}--\ref{ass3.YA} hold. Consider the sequence $\{x_i^t\}$ generated by Algorithm \ref{alg.1}, where parameter $\gamma$ satisfies
		\begin{align}\label{Gma}
			0 <& \gamma < \min \bigg\{  \varphi \left( 8\left( 1+\frac{4}{\kappa } \right) \alpha^2+16\left( 1-\frac{\varphi}{4} \right) \alpha \right) ^{-1}, \nonumber\\
			& \frac{3\kappa}{2}\left( \frac{9\kappa ^2}{16}+2\left( 1-\psi r\varphi \right) \left( 1+\frac{4}{\varphi} \right) \alpha^2 \right) ^{-1} \bigg \},
		\end{align}
		with $\psi \in (\frac{3}{4r},\frac{1}{r}]$. Let the time-varying step sizes and compression scaling parameters be chosen as $\eta_{t} = s_t = \frac{b}{\sqrt{t+a}}$ for all $t \ge 0$. The constant parameters $a$ and $b$ are selected such that $a > \frac{4\mathcal{H}}{3\zeta_1(\gamma)}$ and $0 < b \le \frac{(1-P)\sqrt{a}}{6(1+L)(1+4M)}$,
		with $\zeta_1(\gamma)$ explicitly defined in~\ifbool{submitA}{Appendix 6.1 of~\cite{Mypaper}
		}{\eqref{Zt1}}. Then, for any  $T \ge 1$, the expected network consensus error satisfies
		\begin{align}\label{equ.Theo1_consensus}
			\sum_{t=1}^T \sum_{i=1}^N \mathbb{E}\left[\|x_i^t - \bar{x}^t\|^2\right] \leq C_{1} b^2 \ln\left(1 + \frac{T}{a}\right), 
		\end{align}
		where $\bar{x}^t=\frac{1}{N}\sum_{i=1}^{N}x_i^t$ and the positive constant $C_{1}$ is given in~\ifbool{submitA}{given in Appendix 6.1 of \cite{Mypaper}.
		}{(\ref{equ.Ce_def}).}
		Furthermore, the sequence $\{x_i^t\}$ of Algorithm~\ref{alg.1} satisfies
		\begin{align}\label{eq.TT}
			&\frac{1}{T} \sum_{t=1}^T \frac{1}{N} \sum_{i=1}^N \mathbb{E} \left[ \| x_i^t - \mathcal{T}(x_i^t) \|^2 \right] \nonumber\\
			&\le \mathcal{O}\left( \frac{C_{1}\ln T}{\sqrt{T}} \right) + \mathcal{O}\left(  \frac{\beta^2+2P\zeta^2}{P+1} \right).
		\end{align}
	\end{theorem}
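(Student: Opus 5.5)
The proof has two stages. First, a coupled Lyapunov-type recursion controls the consensus error and the compression (estimate) error. Second, a descent argument on the averaged iterate uses the surrogate function $\mathcal{G}$ of (\ref{Zf}) and Lemma~\ref{lemm.aa}(a).

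\textbf{Compact form.} Stack the variables as $\mathbf{x}^t=[x_1^t;\ldots;x_N^t]$, $\hat{\mathbf{x}}^t$, and $\mathbf{g}^t=[x_i^t-\widetilde{\mathcal{T}}_i(x_i^t,\xi_i^t)]_i$. Algorithm~\ref{alg.1} then reads
\[
\mathbf{x}^{t+1}=\mathbf{x}^t-\eta_t\mathbf{g}^t-\mathbb{1}_{t+1\in\mathcal{I}_T}\,\gamma\big((\mathbf{I}-\mathbf{W})\otimes\mathbf{I}\big)\hat{\mathbf{x}}^t,
\]
and $\hat{\mathbf{x}}$ moves only at communication instants. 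Since $\mathbf{W}$ is doubly stochastic, the average obeys $\bar x^{t+1}=\bar x^t-\eta_t\bar g^t$ exactly; the compression does not perturb it.

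\textbf{Coupled error recursion.} Introduce the consensus error $\|\mathbf{x}^t-\mathbf{1}\otimes\bar x^t\|^2$ and the estimate error $\|\mathbf{x}^{t}-\hat{\mathbf{x}}^{t}\|^2$. At a communication instant, proceed as follows.
\begin{itemize}
\item Write $\hat{\mathbf{x}}=\mathbf{x}-(\mathbf{x}-\hat{\mathbf{x}})$ and use $1-\kappa=\|\mathbf{W}-\frac1N\mathbf{1}\mathbf{1}^\top\|$. Young's inequality then gives contraction of the consensus error by a factor of roughly $1-\gamma\kappa/2$, plus $\mathcal{O}(\gamma\alpha^2/\kappa)$ times the estimate error, plus an $\mathcal{O}(\eta_t^2)$ perturbation.
\item For the estimate error, apply the scaled compressor bound derived after Algorithm~\ref{alg.1} with $\psi r\in(3/4,1]$. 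This gives contraction by a factor of roughly $1-\psi r\varphi/2$, plus $\gamma\alpha^2$ times the consensus error, plus $s_t^2\delta^2$ and $\eta_t^2$ terms.
\end{itemize}
The two conditions on $\gamma$ in (\ref{Gma}) are exactly what makes the $2\times2$ system matrix contractive with rate $1-\zeta_1(\gamma)$.

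Between communications, the errors evolve only through the $\eta_t\mathbf{g}^t$ terms. These number at most $\mathcal{H}$ per block, so over a block I get a one-step inequality of the form $V^{k+1}\le(1-\tfrac34\zeta_1)V^k+c\,\mathcal{H}\eta_{t}^2(\mathcal{D}^2+\delta^2)$, using Assumption~\ref{ass4.YJ} to bound $\mathbb{E}\|\mathbf{g}^t\|^2\le N\mathcal{D}^2$. The condition $a>4\mathcal{H}/(3\zeta_1)$ makes $\eta_t^2$ vary slowly across a block, i.e.\ $\eta_{t}^2/\eta_{t+\mathcal{H}}^2\le 1+\mathcal{O}(\zeta_1)$. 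Unrolling then gives $V^t\lesssim \eta_t^2=b^2/(t+a)$. Summing over $t$ yields (\ref{equ.Theo1_consensus}) with a $\ln(1+T/a)$ factor.

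\textbf{Descent step.} Apply Lemma~\ref{lemm.aa}(a) to $\mathcal{G}$ along $\bar x^{t+1}=\bar x^t-\eta_t\bar g^t$:
\[
\mathbb{E}\mathcal{G}(\bar x^{t+1})\le\mathcal{G}(\bar x^t)-\eta_t\langle\nabla\mathcal{G}(\bar x^t),\mathbb{E}\bar g^t\rangle+\tfrac{1+L}{2}\eta_t^2\,\mathbb{E}\|\bar g^t\|^2 .
\]
Decompose $\mathbb{E}\bar g^t=\nabla\mathcal{G}(\bar x^t)+\frac1N\sum_i\big(\nabla\mathcal{G}_i(x_i^t)-\nabla\mathcal{G}_i(\bar x^t)\big)-\frac1N\sum_i b_i^t$, where $b_i^t$ is the bias. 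Then control each piece:
\begin{itemize}
\item The Lipschitz-difference term is bounded by $(1+L)^2$ times the consensus error.
\item For the bias, use $2\langle a,b\rangle\ge -\|a\|^2/(1+P)\cdot\text{const}-\ldots$ together with Assumption~\ref{ass2.YY}. Then convert $\|\mathcal{T}_i(x_i)-x_i\|^2\le 2\|\nabla\mathcal{G}(x_i)\|^2+2\|\mathcal{T}_i-\mathcal{T}\|^2$ and invoke Assumption~\ref{ass.heteroge}. This produces a coefficient $(1-P)/\text{const}$ on $\|\nabla\mathcal{G}\|^2$ and an additive term $\beta^2+2P\zeta^2$.
\item The second-moment term uses the variance growth $M$. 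Its $\|\nabla\mathcal{G}\|^2$ contribution is absorbed because $b\le(1-P)\sqrt a/(6(1+L)(1+4M))$ implies $\eta_t(1+L)(1+4M)\le(1-P)/6$.
\end{itemize}

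\textbf{Telescoping.} Summing, dividing by $\sum_t\eta_t\sim\sqrt T$, and assuming $\mathcal{G}$ is bounded below (implicit, as in \cite{PointTac}) gives a bound of order
\[
\frac{\mathcal{G}(\bar x^0)-\inf\mathcal{G}+\sum_t\eta_t^2+\sum_t\eta_t\|\mathbf{x}^t-\mathbf{1}\bar x^t\|^2}{\sqrt T}+\frac{\beta^2+2P\zeta^2}{1+P}.
\]
Finally, pass from $\|\nabla\mathcal{G}(\bar x^t)\|^2$ to $\frac1N\sum_i\|x_i^t-\mathcal{T}(x_i^t)\|^2$ at a cost of $(2+L)^2$ times the consensus error. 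By (\ref{equ.Theo1_consensus}), $\sum_t\eta_t^2\sim b^2\ln T$, which yields (\ref{eq.TT}).

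The main obstacle is the block recursion under dynamic skipping with a non-contractive compressor ($\bar r$ may exceed $1$). The coupled consensus/estimate matrix must be shown contractive only at communication instants while errors grow freely in between. I would handle this by working on the subsequence $\mathcal{I}_{(k)}$, and tuning the Young constants to obtain exactly $\zeta_1(\gamma)$.
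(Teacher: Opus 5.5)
\textbf{There is a genuine gap in the bias step of your descent argument.} That is the one place where the hypothesis $P<1$ must be used. Your consensus/compression stage matches the paper's Lemmas 3--5: a recursion on $\|\mathbf X-\overline{\mathbf X}\|_{\text F}^2+\|\mathbf X-\widehat{\mathbf X}\|_{\text F}^2$ across communication instants with rate $1-\zeta_1(\gamma)$, slow variation of $\eta_t^2$ from $a>4\mathcal H/(3\zeta_1(\gamma))$, and a crude bound between communications. One correction there: the block perturbation is $\mathcal O(\mathcal H^2\eta_t^2\mathcal D^2)$, not $\mathcal O(\mathcal H\eta_t^2)$. The up to $\mathcal H$ increments are not zero-mean, so Cauchy--Schwarz costs a factor $\mathcal H$ twice.

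\textbf{Why the bias step fails as written.} By Young's inequality,
$-\frac{\eta_t}{N}\sum_i\langle\nabla\mathcal G(\bar x^t),b_i^t\rangle\le\frac{\eta_t\rho}{2}\|\nabla\mathcal G(\bar x^t)\|^2+\frac{\eta_t}{2\rho}\big(\beta^2+\frac PN\sum_i\|x_i^t-\mathcal T_i(x_i^t)\|^2\big)$.
Your conversion $\|x_i-\mathcal T_i(x_i)\|^2\le2\|\nabla\mathcal G(x_i)\|^2+2\|\mathcal T_i(x_i)-\mathcal T(x_i)\|^2$ then puts a coefficient of at least $\eta_t(\frac\rho2+\frac P\rho)$ on $\|\nabla\mathcal G(\bar x^t)\|^2$, even at exact consensus. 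The drift term supplies at best $-\eta_t\|\nabla\mathcal G(\bar x^t)\|^2$. Since $\min_{\rho>0}(\frac\rho2+\frac P\rho)=\sqrt{2P}$, the net coefficient is negative only when $P<1/2$. With $\rho=\frac{1+P}{2}$, which is what your additive term $\frac{\beta^2+2P\zeta^2}{1+P}$ requires, it already fails for $P>2\sqrt3-3\approx0.46$. So the claimed coefficient $(1-P)/\text{const}$ does not follow from this bookkeeping, and the argument covers only a restricted range of $P$, not all $P<1$.

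\textbf{The missing idea.} Do not convert the state-dependent bias into the global gradient at all.
\begin{enumerate}
\item Split the drift term $-\frac{\eta_t}{N}\sum_i\langle\nabla\mathcal G(\bar x^t),x_i^t-\mathcal T_i(x_i^t)\rangle$ with weights $\frac{2P}{P+1}$ and $\frac{1-P}{P+1}$, as the paper does.
\item Apply $-2\langle u,v\rangle=-\|u\|^2-\|v\|^2+\|u-v\|^2$ to the first part. This produces the negative local term $-\frac{\eta_tP}{N(P+1)}\sum_i\|x_i^t-\mathcal T_i(x_i^t)\|^2$, which cancels the bias's state-dependent part exactly when $\rho=\frac{P+1}{2}$.
\item Bound the price term by $\|u-v\|^2\le2\|\mathcal T(\bar x^t)-\mathcal T_i(\bar x^t)\|^2+2(1+L)^2\|x_i^t-\bar x^t\|^2$. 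The factor $2$ now lands on $\zeta^2$, giving $\frac{2P\zeta^2}{P+1}$, instead of on the gradient.
\end{enumerate}
The net coefficient becomes $-\frac{\eta_t}{2}+\frac{\eta_t(P+1)}{4}=-\frac{\eta_t(1-P)}{4}$, which is negative for every $P<1$.

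An alternative fix also works for all $P<1$. Use the exact variance identity $\frac1N\sum_i\|\nabla\mathcal G_i(\bar x)\|^2=\|\nabla\mathcal G(\bar x)\|^2+\frac1N\sum_i\|\nabla\mathcal G_i(\bar x)-\nabla\mathcal G(\bar x)\|^2$. Combine it with a $(1+\epsilon)$-Young split for the consensus part, so the gradient coefficient is $1+\epsilon$ rather than $2$.

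\textbf{Minor point.} To pass from your $\eta_t$-weighted sum to the uniform average in the theorem, you need monotonicity of the step sizes: $\frac1T\sum_t(\cdot)\le\frac{1}{T\eta_T}\sum_t\eta_t(\cdot)$.
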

	\vspace{-7mm}
	
	\ifbool{submitA}{
\begin{proof}
The proof is given in Appendix 6.2 of the arXiv version \cite{Mypaper} and is omitted here due to space limitation.
\end{proof}
}{
\begin{proof}
The proof can be found in Appendix~\ref{FE}.
\end{proof}
}

		\vspace{-2mm}
	\begin{remark}\label{R333}
		According to the bound in Theorem 1, the steady-state error neighborhood $\mathcal{O}\left({(\beta^2+2P\zeta^2)}/({P+1})\right)$ is entirely independent of the communication skipping interval $\mathcal{H}$. This theoretical insight reveals that one can substantially decrease the communication frequency by employing a larger maximum skipping interval $\mathcal{H}$ without enlarging the final convergence neighborhood. On the other hand, setting $\mathcal{H}$ to be excessively large will inflate the transient constant  associated with the $\mathcal{O}\big( {C_{1}  \ln T}/{\sqrt{T}} \big)$ term. This behavior illustrates a practical trade-off in distributed system design, as prioritizing lower communication costs can sometimes lead to a slower early-stage convergence.
	\end{remark}
	
		\vspace{-3mm}
	
	From Theorem \ref{Theo1}, it establishes that Algorithm \ref{alg.1} asymptotically seeks fixed points within a neighborhood for arbitrary Lipschitz operators, recovering exact convergence when the bias parameters $\beta$ and $P$ vanish. 
	To the best of our knowledge, this is among the first convergence result for distributed fixed-point iterations that remains applicable even in the extended operator regime. Furthermore, for contractive operators, we can derive the following tighter convergence bounds.
			\vspace{-3mm}
	\begin{theorem}\label{Theo2}
		Suppose that Assumptions \ref{ass5.Un}--\ref{ass3.YA} hold, and further assume the local operators are contractive with a Lipschitz constant $L < 1$. Consider the sequence $\{x_i^t\}$ generated by Algorithm \ref{alg.1}, where parameter $\gamma$ satisfies the condition in \eqref{Gma}. Let the time-varying step sizes and compression scaling parameters be chosen as $\eta_{t} = s_t = \frac{b}{t+a}$ for all $t \ge 1$. The constant parameters $a$ and $b$ are selected such that $a\ge \max \left\{ \frac{8\mathcal{H}}{3\zeta_1(\gamma)}, \frac{12(1+L)(1+4M)b}{1-P} \right\}$,
		and $b > \frac{4}{(1-L)(1-P)}$. 
		Then, for any $t \ge 1$, the expected network consensus error satisfies
				\vspace{-3mm}
		\begin{align}\label{TTT22}
			\sum_{i=1}^N \mathbb{E}\big[\|x_i^t - \bar{x}^t\|^2\big] \leq \frac{C_2 b^2}{(t+a)^2},	
		\end{align}
		where the positive constant $C_{2}$ is given in~\ifbool{submitA}{Appendix 6.1 of~\cite{Mypaper}.
		}{(\ref{equ.Ce2_def}).} 
		Furthermore, the sequence $\{x_i^t\}$ of Algorithm~\ref{alg.1}  satisfies 
				\vspace{-3mm}
		\begin{align}\label{eq:final_distance_bound}
			&\frac{1}{N} \sum_{i=1}^N \mathbb{E} \big[ \|x_i^t - x^*\|^2 \big] \nonumber\\
			&\leq \mathcal{O}\left( \frac{\ln t}{t} \right) + \mathcal{O}\left(\frac{\beta^2 +2 P\zeta^2}{(1-L)^2(1-P^2)}\right).
		\end{align}
	\end{theorem}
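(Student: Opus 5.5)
The proof follows the template of Theorem~\ref{Theo1}. There are two parts: a consensus/compression-error recursion, and a descent argument on the surrogate function. The descent is now sharpened by strong convexity (Lemma~\ref{lemm.aa}(b),(c)), which turns the telescoping sum into a contracting scalar recursion.

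\emph{Step 1: coupled error recursion.} Write the iterates in stacked form $\mathbf{x}^t$, $\hat{\mathbf{x}}^t$. Introduce the consensus error $\|\mathbf{x}^t-\mathbf{1}_N\otimes\bar x^t\|^2$ and the compression error $\|\mathbf{x}^{t}-\hat{\mathbf{x}}^{t}\|^2$. On communication rounds, use the scaled-compressor bound $(1-\varphi)\|y_i^t\|^2+s_t^2\delta^2$ with $\psi\in(\frac{3}{4r},\frac1r]$, the spectral gap $\kappa$, and $\alpha=\|\mathbf{I}-\mathbf{W}\|_2$. Under \eqref{Gma}, these give a linear recursion in which the Lyapunov combination $\Phi^t$ contracts by a factor $1-\zeta_1(\gamma)$. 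Between communication rounds (at most $\mathcal{H}$ steps), the local KM updates inflate $\Phi$ by $O(\eta_t^2\mathcal{D}^2)$ per step, using Assumption~\ref{ass4.YJ}. The absolute compression error contributes $O(s_t^2\delta^2)$ per round. Blocking over intervals of length $\le\mathcal{H}$ yields
\[
\Phi^{t+1}\le\Big(1-\tfrac{3\zeta_1(\gamma)}{4\mathcal{H}}\Big)\Phi^{t}+c\,(\eta_t^2+s_t^2),\qquad \eta_t=s_t=\tfrac{b}{t+a}.
\]
An induction with the hypothesis $\Phi^t\le C_2b^2/(t+a)^2$ then gives \eqref{TTT22}. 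The inductive step needs $(1-\frac{3\zeta_1}{4\mathcal{H}})\frac{(t+a+1)^2}{(t+a)^2}\le 1-\frac{3\zeta_1}{8\mathcal{H}}$, and this is exactly where $a\ge \frac{8\mathcal{H}}{3\zeta_1(\gamma)}$ is used.

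\emph{Step 2: descent on $\mathcal{G}$.} By double stochasticity, $\bar x^{t+1}=\bar x^t-\eta_t\frac1N\sum_i(x_i^t-\widetilde{\mathcal{T}}_i(x_i^t,\xi_i^t))$, since the consensus and compression corrections average out. Apply Lemma~\ref{lemm.aa}(a) at $\bar x^t$. The inner-product term has expectation $-\eta_t\langle\nabla\mathcal{G}(\bar x^t),\frac1N\sum_i(x_i^t-\mathbb{E}\widetilde{\mathcal{T}}_i)\rangle$. Split it into three parts: the true gradient $\nabla\mathcal{G}(\bar x^t)$, a mismatch $\le(1+L)\|x_i^t-\bar x^t\|$, and the bias. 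Bound the bias by Assumption~\ref{ass2.YY}, replacing $\|\mathcal{T}_i(x)-x\|^2\le 2\|\mathcal{T}(x)-x\|^2+2\|\mathcal{T}_i(x)-\mathcal{T}(x)\|^2$ and then using Assumption~\ref{ass.heteroge}. This is where $\beta^2+2P\zeta^2$ enters, and the decoupling from $\zeta$ comes from using Young's inequality with weight matched to $P$, giving a $(1-P^2)$-type denominator. Bound the second-order term with $M$ and $\sigma$, which requires $\eta_t(1+L)(1+4M)\le\frac{1-P}{12}$, guaranteed by the lower bound on $a$. The result is
\[
\mathbb{E}[\mathcal{G}(\bar x^{t+1})-\mathcal{G}^*]\le \mathbb{E}[\mathcal{G}(\bar x^t)-\mathcal{G}^*]-\tfrac{(1-P)\eta_t}{4}\mathbb{E}\|\nabla\mathcal{G}(\bar x^t)\|^2+c\,\eta_t\big(\beta^2+2P\zeta^2\big)+c\,\eta_t\tfrac{\Phi^t}{N}+c\,\eta_t^2\sigma^2 .
\]

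\emph{Step 3: strong convexity and recursion solving.} Since $\mathcal{T}$ is an average of $L$-Lipschitz maps with $L<1$, $\mathcal{G}$ is $(1-L)$-strongly convex. Lemma~\ref{lemm.aa}(b) applied to $\mathcal{G}$ gives $\|\nabla\mathcal{G}\|^2\ge2(1-L)(\mathcal{G}-\mathcal{G}^*)$, so $e_t=\mathbb{E}[\mathcal{G}(\bar x^t)-\mathcal{G}^*]$ satisfies
\[
e_{t+1}\le\Big(1-\tfrac{(1-L)(1-P)b}{2(t+a)}\Big)e_t+\tfrac{cb}{t+a}\Delta+\tfrac{c'}{(t+a)^2}, \qquad \Delta=\beta^2+2P\zeta^2 .
\]
Here Step 1 contributes $\eta_t\Phi^t=O((t+a)^{-3})$. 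The condition $b>\frac{4}{(1-L)(1-P)}$ makes the contraction coefficient exceed $2/(t+a)$. Unrolling the products $\prod(1-\frac{\rho}{k+a})\le(\frac{t_0+a}{t+a})^{\rho}$ with $\rho>2$ gives $e_t\le O(\ln t/t)+O(\Delta/((1-L)(1-P)))$. The logarithm comes from summing the $(k+a)^{-2}$ terms against the weights. Finally, Lemma~\ref{lemm.aa}(c) gives $\|\bar x^t-x^*\|^2\le\frac{2}{1-L}e_t$, and $\frac1N\sum_i\|x_i^t-x^*\|^2\le2\|\bar x^t-x^*\|^2+\frac2N\Phi^t$, which yields \eqref{eq:final_distance_bound} with the $(1-L)^2$ factor.

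The main obstacle is Step 2's bias handling. The state-dependent bias $P\|\mathcal{T}_i(x_i^t)-x_i^t\|^2$ is evaluated at the local iterates, not at $\bar x^t$. It must be absorbed into $-\|\nabla\mathcal{G}(\bar x^t)\|^2$ for every $P<1$ without picking up a $\zeta$-dependent restriction. I would first try a weighted Young split $\langle g,e\rangle\ge -\frac{1+P}{2}\|g\|^2-\frac{1}{2(1+P)}\|e\|^2$, paired with the $(1+L)^2\|x_i^t-\bar x^t\|^2$ transfer, and check that the resulting coefficient stays proportional to $(1-P)$.
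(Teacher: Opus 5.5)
\textbf{Main gap: the bias step, which you flag as the main obstacle, does not go through as proposed.} Write $g=\bar x^t-\mathcal{T}(\bar x^t)$ and $h_i=x_i^t-\mathcal{T}_i(x_i^t)$. Young's inequality with parameter $\rho$ bounds the bias by $\frac{\rho}{2}\|g\|^2+\frac{1}{2\rho}\big(\beta^2+\frac{P}{N}\sum_i\|h_i\|^2\big)$. If you then convert $\frac1N\sum_i\|h_i\|^2$ into $\|g\|^2$ through the factor-$2$ split you wrote, the bias consumes at least $\frac{\rho}{2}+\frac{P}{\rho}\ge\sqrt{2P}$ of the unit descent coefficient. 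So nothing survives once $P\ge\frac12$. Your specific split ($\frac{1+P}{2}\|g\|^2+\frac{1}{2(1+P)}\|e\|^2$) consumes $\frac{1+P}{2}+\frac{P}{1+P}$, which already equals $1$ at $P=\sqrt2-1$. Hence the asserted $-\frac{(1-P)\eta_t}{4}\mathbb{E}\|\nabla\mathcal{G}(\bar x^t)\|^2$ is not obtained on the full range $P<1$, which is exactly what the theorem claims (cf.\ Remark~\ref{R22.1}).

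The paper's idea is never to convert the local residuals into $\|g\|^2$. It splits $-\frac{\eta_t}{N}\sum_i\langle g,h_i\rangle$ with weights $\frac{2P}{P+1}$ and $\frac{1-P}{P+1}$. On the first part it uses $-2\langle g,h_i\rangle=-\|g\|^2-\|h_i\|^2+\|g-h_i\|^2$, with $\|g-h_i\|^2\le2\|\mathcal{T}_i(\bar x^t)-\mathcal{T}(\bar x^t)\|^2+2(1+L)^2\|x_i^t-\bar x^t\|^2$. This generates $-\frac{\eta_tP}{(P+1)N}\sum_i\|h_i\|^2$, which cancels the state-dependent bias exactly when Young is taken with $\rho=\frac{P+1}{2}$. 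What remains is $-\frac{\eta_t(1-P)}{4}\|g\|^2+\frac{\eta_t}{P+1}\beta^2+\frac{2\eta_tP}{P+1}\zeta^2$ plus consensus terms.

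Your Young route can also be rescued, but only by dropping both the factor $2$ and your weight. Use the exact identity $\frac1N\sum_i\|\bar x-\mathcal{T}_i(\bar x)\|^2=\|g\|^2+\frac1N\sum_i\|\mathcal{T}_i(\bar x)-\mathcal{T}(\bar x)\|^2$. Move from $x_i^t$ to $\bar x^t$ with a $(1+\epsilon)$ Young factor, $\epsilon\propto 1-P$, and put $\frac{1+P}{4}$, not $\frac{1+P}{2}$, on $\|g\|^2$. The net first-order coefficient is then $\frac{(1-P)(3+P)}{4(1+P)}-O(\epsilon)$.

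\textbf{Secondary gap in Step 1.} A per-step recursion with uniform factor $1-\frac{3\zeta_1(\gamma)}{4\mathcal{H}}$ cannot be derived. On steps with $t+1\notin\mathcal{I}_T$ there is no mixing and $\widehat{\mathbf{X}}$ is frozen, so the best available bound is $\Phi^{t+1}\le(1+\epsilon)\Phi^t+(1+\epsilon^{-1})O(\eta_t^2)$, with no contraction. Even granting your recursion, the inductive inequality you state needs $t+a$ roughly twice $\frac{8\mathcal{H}}{3\zeta_1(\gamma)}$.

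The paper instead works at communication indices. It uses $E_{\mathcal{I}_{(t+1)}}\le(1-\zeta_1)E_{\mathcal{I}_{(t)}}+\zeta_2\mathcal{H}^2\eta^2_{\mathcal{I}_{(t)}}+N\psi r\delta^2s^2_{\mathcal{I}_{(t)}}$ and inducts on $E_{\mathcal{I}_{(t)}}\le\frac{4}{\zeta_1^2}\big(\zeta_2\mathcal{H}^2\eta^2_{\mathcal{I}_{(t)}}+N\psi r\delta^2s^2_{\mathcal{I}_{(t)}}\big)$. The induction uses $\eta^2_{\mathcal{I}_{(t+1)}}/\eta^2_{\mathcal{I}_{(t)}}\ge1-\frac{2\mathcal{H}}{a}\ge1-\frac{3\zeta_1}{4}$, which is where $a\ge\frac{8\mathcal{H}}{3\zeta_1}$ enters. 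Intermediate steps are then covered by $E_t\le2E_{\mathcal{I}_{(t)0}}+4N\mathcal{D}^2\mathcal{H}^2\eta^2_{\mathcal{I}_{(t)0}}$.

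Your Step 3 is fine once Step 2 holds. Unrolling toward the fixed point $r_3/r_1$ actually gives a $b$-independent bias floor, slightly sharper than the paper's $2r_3\propto b$. With contraction rate above $1$, the $(k+a)^{-2}$ terms contribute $O(1/t)$ with no logarithm.
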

	\vspace{-7mm}
		\ifbool{submitA}{
	\begin{proof}
		The proof is given in Appendix~6.3 of the arXiv version~\cite{Mypaper} and is omitted here due to space limitation.
	\end{proof}}{
	\begin{proof}
The proof can be found in Appendix~6.3.
	\end{proof} }
		\vspace{-2mm}
	\begin{remark}\label{R111}
		With reference to the convergence results in Theorems~\ref{Theo1} and~\ref{Theo2}, Algorithm~\ref{alg.1} achieves a sublinear convergence rate  of $\mathcal{O}(\ln T/\sqrt{T})$ for  non-contractive operators and a faster $\mathcal{O}(\ln T/T)$ rate for contractive ones. These rates seamlessly recover the optimal  iteration complexities of distributed optimization algorithms in stochastic non-convex~\citep{KaiHigh,TAC22} and strongly convex cases~\citep{YuanAuto}, respectively. Meanwhile, from an operator theoretic perspective, our rates parallel the latest results in centralized stochastic fixed-point methods~\citep{PointTac}, while successfully extending them to distributed networks. To our best knowledge, this establishes among the first distributed fixed-point iteration framework capable of addressing expansive operators (i.e., Lipschitz constant $L>1$). It fundamentally resolves the theoretical bottleneck that the operator $Id-\nabla \varUpsilon $ of a generic non-convex objective $\varUpsilon$ naturally violates the traditional non-expansiveness ($L\le 1$) requirement utilized in prior fixed-point frameworks~\citep{XiuAuto,locTac,DOT,FanTcns}. Furthermore, distinguishing itself from the case of deterministic operators with perfect communication in~\cite{XiuAuto,locTac,DOT,FanTcns}, our framework is designed for the more practical biased stochastic regime, while comprehensively mitigating communication overheads via integrated communication compression and period skipping mechanisms.
	\end{remark}
	\vspace{-3mm}
	\begin{remark}\label{R22.1}
		A crucial theoretical advancement over the recent distributed biased stochastic optimization in~\cite{BiaseTSP} lies in the improved tolerance to state-dependent bias. Specifically, the convergence criterion in~\cite{BiaseTSP} couples the allowable state-dependent bias with network's heterogeneity parameter in a form of $P\zeta <1/15$, requiring the former to shrink inversely as the latter increases, which severely limits its applicability in highly heterogeneous scenarios. Our analysis eliminates this restrictive coupling. By simply enforcing $P<1$ which coincides with the criterion in the centralized counterpart~\citep{Pushi}, the proposed framework isolates the bias tolerance from the network's heterogeneity, effectively bounding the steady-state impact of correlated bias regardless of the heterogeneity parameters $\zeta$, and  guarantees exact convergence to the fixed point under unbiased conditions  $\beta=0$ and $P=0$.
	\end{remark}
	
		\vspace{-3mm}
	\section{Numerical Examples}\label{Sec: Simulation}
			\vspace{-3mm}
	In this section, we apply the fixed-point iteration in optimization problems, validating prior theoretical results through numerical simulations. 
	
	To save communication, we consider three kinds of compressor $\mathcal{C}$ discussed in Section~\ref{s2.C} for $x\in \mathbb{R}^n$ as follows.
	\begin{itemize}
		\item $l$-bits $\infty$-quantizer		\vspace{-3mm}
		\begin{align*}
			\mathcal{C}_1(x)=\frac{\|x\|_\infty}{2^{l-1}}\text{sign}(x)\circ
			\bigg\lfloor\frac{2^{l-1}|{x}|}{\|x\|_\infty}+\varpi\bigg\rfloor,
		\end{align*}
		where $\text{sign}(x)$ is the sign function, $|{x}|$ represents the element-wise absolute value of $x$, $\circ$ is the Hadamard product, and $\varpi$ is a random perturbation vector uniformly sampled from $[0,1]^n$. As discussed in~\cite{XinleiTAC,ZhangjiaqiTAC}, $\mathcal{C}_1$ satisfies Assumption \ref{ass3.YA} with $r=1+n/4^{l}$, $\varphi=1/(1+n/4^{l})$ and $\delta^2=0$. If scalars can be transmitted with sufficient precision using $b$ bits, then transmitting $\mathcal{C}_1(x)$ requires $(l+1)n+b$ bits. In the experiments, we uniformly set $b=64$ and $l=2$.

		\item 	Standard uniform quantizer\\  		\vspace{-3mm}
		\begin{align*}
			\mathcal{C}_2(x)=\Delta\left\lfloor\frac{x}{\Delta}+\frac{\mathbf{1}_n}{2}\right\rfloor,
		\end{align*}
		where the quantization stepsize $\Delta$ is a positive integer. This compressor satisfies Assumption \ref{ass3.YA} with $r=1$, $\varphi=1$ and $\delta^2={n\Delta^2}/{4}$, and it has been adopted in several works, including~\cite{XinleiTAC,Ge2023}. When transmitting $\mathcal{C}_2(x)$, the total number of bits required is $nq$, assuming each integer is represented with $q$ bits. In the experiments, we set $\Delta=1$ and $q=8$.

		\item  The composition of sparsification and uniform quantizer $\mathcal{C}_3(x)=\mathcal{Q}_2(\mathcal{Q}_1)$, where $\mathcal{Q}_1=(q(x_1),\ldots,q(x_n))^T$ and for $k\in [n]$, 		\vspace{-3mm}
		\begin{align*}
			q(x_k)=\begin{cases}
				\frac{x_k}{p} \quad w.p.\quad  p,\\
				0    \quad w.p.\quad  1-p,
			\end{cases}
		\end{align*}
		where $p\in (0,1)$, $\mathcal{Q}_2(x)=\Delta\cdot \text{round}(\frac{x}{\Delta}+v)$ with $v$ being a random perturbation uniformly distributed in $[-{1}/{2}, {1}/{2}]^n$ and $\Delta$ is a positive integer. Then simple calculations reveal that $\mathcal{C}_3$ satisfies Assumption \ref{ass3.YA} with $r=\frac{1}{p}$, $\varphi =p$ and $\delta^2= {n p^3\Delta^2 }/{4}$, possessing both relative and absolute compression error. Only $nqp$ bits are needed to transmit $\mathcal{C}_3(x)$ if $q$ bits are allocated to transmit an integer. In the experiments, we choose $\Delta=1$ and $p=0.75$.
	\end{itemize}

		\vspace{-3mm}
	\subsection{Non-convex Case}
			\vspace{-3mm}
	We first consider a non-convex distributed optimization problem composed of $N$ agents, which is formulated as  				\vspace{-3mm}
	\begin{align}\label{OppS} 
		\underset{x\in \mathbb{R}^n}{\min} f(x)=\sum_{i=1}^{N} f_i(x),
	\end{align}
	where $N=6$ and~$f_1\left( x \right) =0.06\,x^4-0.02\,x^2$, 
	$f_2\left( x \right) =0.05\,\sin \left( x+\frac{1}{2} \right) +0.15\,\cos \left( \frac{10x}{3} \right)$, 
	$f_3\left( x \right) =0.1\,e^{-x^2}+0.1\,x^4-0.3\,x^2$, 
	$
	f_4\left( x \right) =0.14\,x^4-0.2\,x^2
	$,
	$
	f_5\left( x \right) =0.45\cos \left( x \right) +0.15\,\sin \left( \frac{10x}{3}+\frac{1}{2} \right) 
	$,
	$
	f_6\left( x \right) =0.4\,e^{-x^2}-0.3x^2
	$. Considering measurement errors and other practical factors, each agent $i$ only has access to the noisy gradient information $\nabla f_i(x(t))+\xi_i^t$ at each iteration, where $\xi_i^t$ is the measurement noise at iteration $t$. In the experiments, we let $n=30$.
	
		\vspace{-3mm}
	
	To bridge problem (\ref{OppS}) and the sum-separable model (\ref{F00}), define $\mathcal{T}_i=Id-\tau \nabla f_i$ with $\tau =0.1$ as illustrated in Section~\ref{s2.B}, enabling the proposed distributed fixed point iteration to solve stochastic optimization (\ref{OppS}). From the expression of the non-convex function $f_i$, it is straightforward to infer that $\mathcal{T}_i$ is expansive, exceeding the theoretical scope supported by existing distributed operator frameworks~\citep{XiuAuto,DOT,FanTcns,locTac}.
	
	
	\begin{figure}[!t]
		\centering 	
		\subfloat[Evolutions with compressor $\mathcal{C}_1$]{\includegraphics[width=1.5in,height=1.2in]{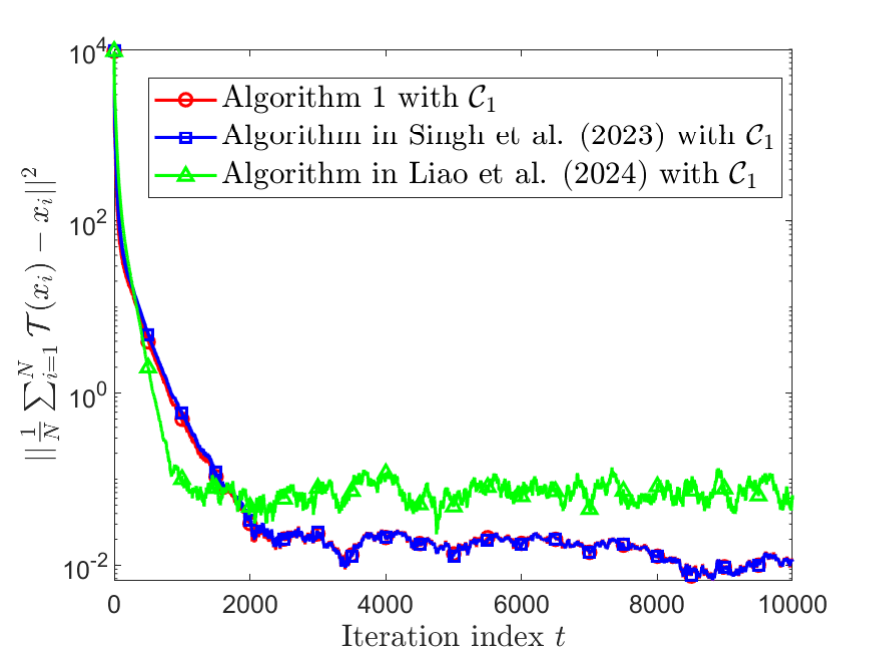}}
		\quad \quad
		\subfloat[Evolutions with compressor $\mathcal{C}_2$]{\includegraphics[width=1.5in,height=1.2in]{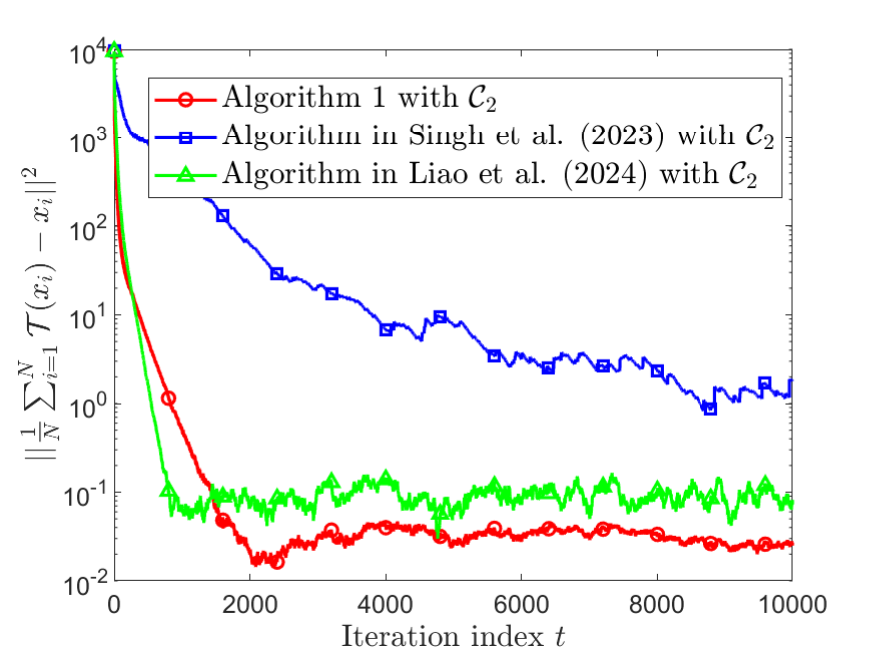}}
		\caption{Evolutions of $\| \frac{1}{N}\sum_{i=1}^{N} \mathcal{T}(x_i)-x_i \|^2$ for the three algorithms with different compressors in the non-convex case.}
		\label{FF1}
	\end{figure}
	
	\begin{figure}[!t]
		\centering 	
		\subfloat[Evolutions with respect to the number of iterations]{\includegraphics[width=1.5in,height=1.2in]{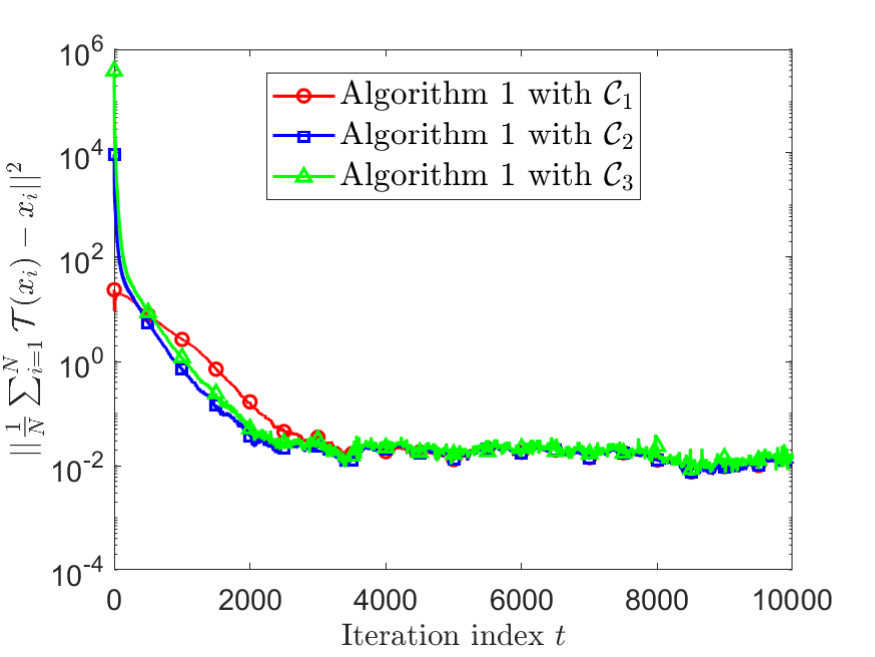}}	
		\quad \quad
		\subfloat[Evolutions with respect to communication bits]{\includegraphics[width=1.5in,height=1.2in]{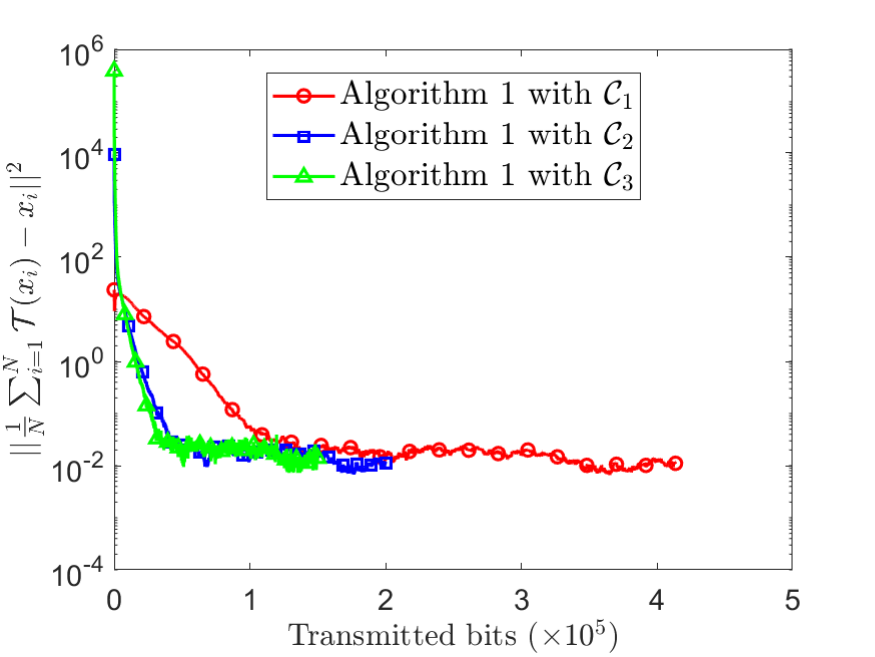}}
		\caption{Evolutions of $\| \frac{1}{N}\sum_{i=1}^{N} \mathcal{T}(x_i)-x_i \|^2$ for Algorithm \ref{alg.1} with different compressors in the non-convex case.}
		\label{FF2}
	\end{figure}
	
	\begin{figure}[!t]
		\centering 	
		\subfloat[Evolutions with respect to the number of iterations]{\includegraphics[width=1.5in,height=1.2in]{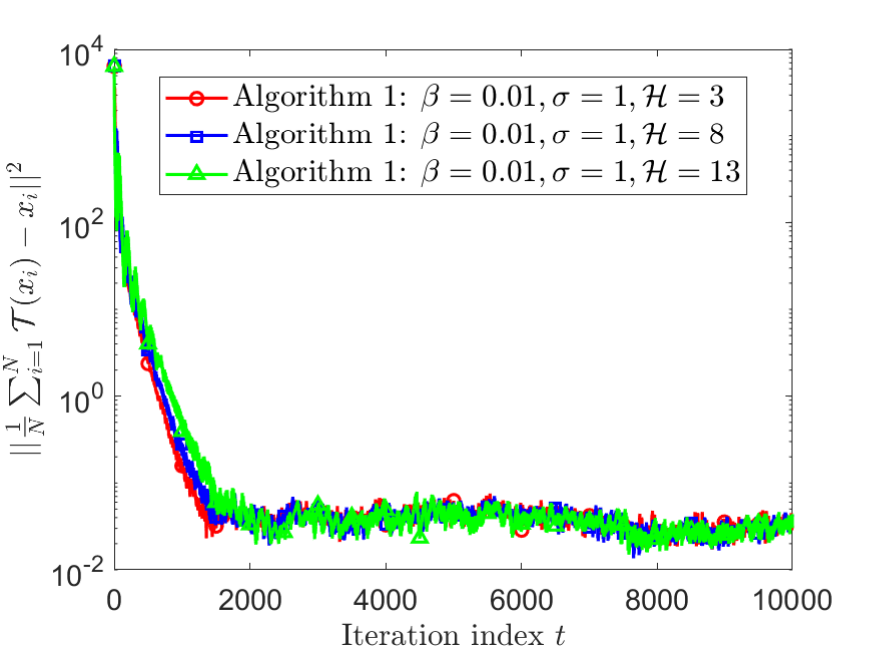}}
		\quad \quad
		\subfloat[Evolutions with respect to communication bits]{\includegraphics[width=1.5in,height=1.2in]{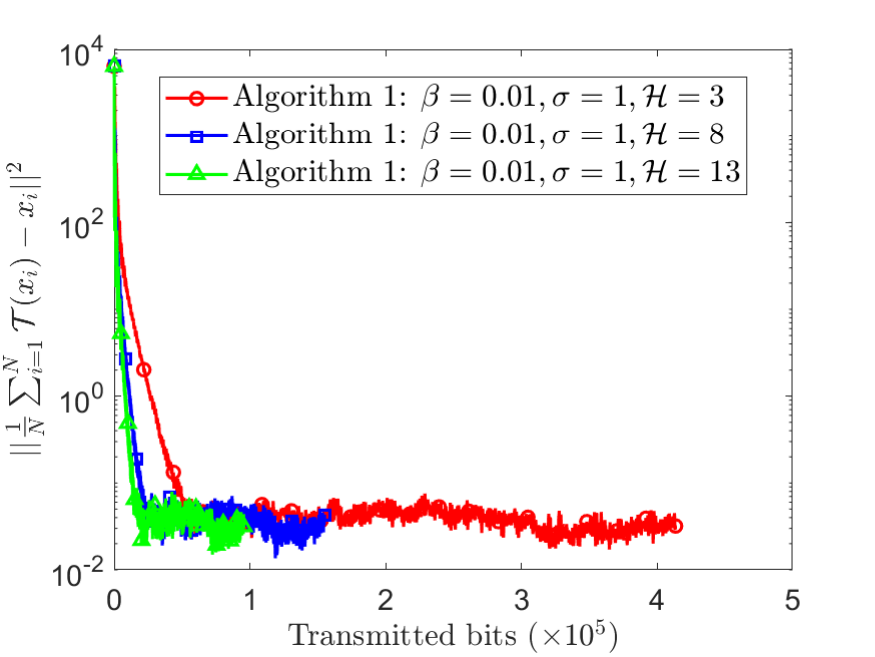}}	
		\caption{Evolutions of $\|\frac{1}{N}\sum_{i=1}^{N} \mathcal{T}(x_i)-x_i \|^2$ for Algorithm~\ref{alg.1} with different communication interval length $\mathcal{H}$ in the non-convex case.}
		\label{FF3}
	\end{figure}
	
	\begin{figure}[!t]
		\centering 	
		\subfloat[Comparisons under different biases settings]{\includegraphics[width=1.5in,height=1.2in]{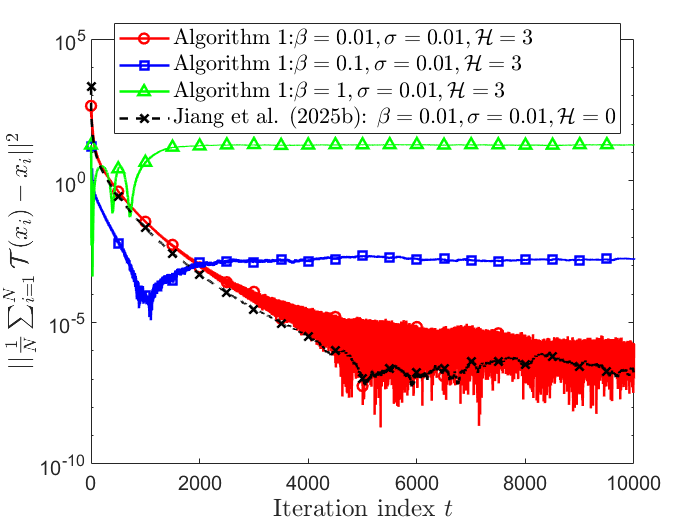}}
		\quad \quad
		\subfloat[Comparisons under different variance settings]{\includegraphics[width=1.5in,height=1.2in]{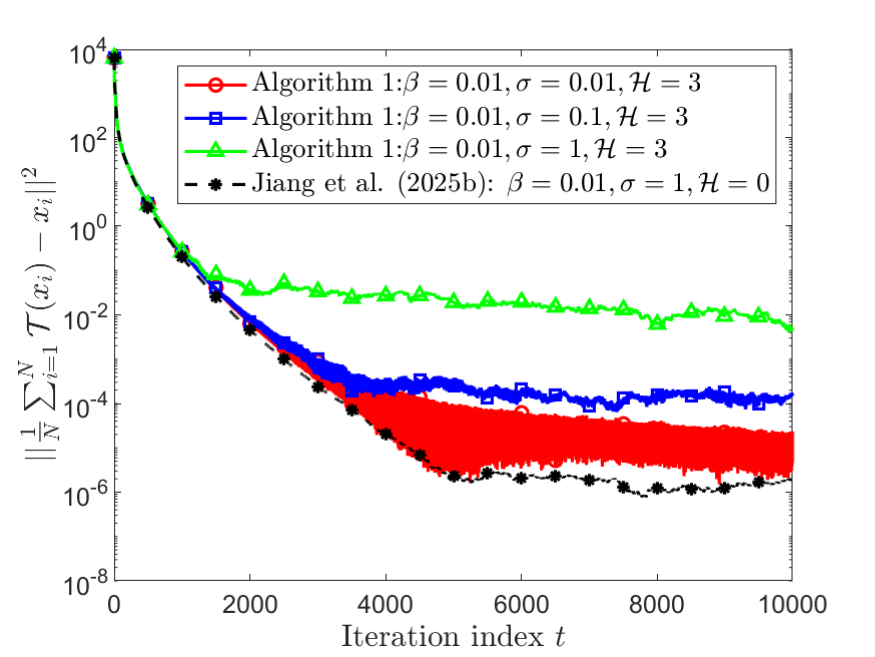}}
		\caption{Evolutions of $\|\frac{1}{N}\sum_{i=1}^{N} \mathcal{T}(x_i)-x_i \|^2$ with different biased stochastic oracles in the non-convex case.}
		\label{FF4}
	\end{figure}

		\vspace{-3mm}
	
	In the simulations, we use a doubly stochastic graph $\mathscr{G}$ generated by the Metropolis--Hastings algorithm~\citep{DDD}. 
	We first compare the performance of Algorithm~\ref{alg.1} against compressed algorithms in~\cite{Pushi,TAC22} using compressor $\mathcal{C}_1$ over  directed graph $\mathscr{G}$. In this case, we consider the typical Gaussian noise $\xi_i^t\sim \mathcal{N}(0,0.1)$.  For Algorithm~\ref{alg.1}, we set $\eta_t=s_t=\frac{0.8}{\sqrt{t+80}}$, $\gamma=0.7$, $\psi=0.99$, $\mathcal{H}=3$.  For the algorithm in~\cite{Pushi}, the parameters are chosen as $\alpha_x = \alpha_y = 0.5$, $\gamma_x = \gamma_y = 0.8$, $\eta = 0.005$. For the algorithm in~\cite{TAC22}, we select $\eta_t=\frac{0.08}{\sqrt{t+80}}$, $\gamma=0.8$, $\mathcal{H}=3$. Running the three algorithms, the evolution of $\frac{1}{N}\sum_{i=1}^{N} \|\mathcal{T}_{i}(x_i)-x_i \|^2$ is shown in Fig.~\ref{FF1}(a). The compressor $\mathcal{C}_1$ is unbiased and exhibits only relative compression error, theoretically ensuring convergence for all three algorithms.  It is observed that the three algorithms achieve similar convergence results when equipped with the same compressor $\mathcal{C}_1$. However Algorithm~\ref{alg.1} demonstrates superior communication efficiency performance to the algorithms in~\cite{Pushi,TAC22} under the same compressor, requiring fewer communication rounds due to its incorporated communication skipping mechanism. Meanwhile, Fig.~\ref{FF1}(b) demonstrates the performance of the three algorithms when using $\mathcal{C}_2$. As can be observed from this figure, the presence of absolute compression error causes the algorithms in~\cite{Pushi,TAC22} to suffer significant performance degradation or even divergence. This underscores the vulnerability of these methods to absolute errors. In contrast, Algorithm~\ref{alg.1} maintains steady convergence, validating our theoretical design where the dynamic scaling parameter $s_t$ successfully suppresses the accumulation of absolute compression errors.

	\vspace{-3mm}
	
	Additionally, with the noise $\xi_{i}^t\sim \mathcal{N}(0,0.01)$ Fig.~\ref{FF2} compares Algorithm \ref{alg.1} across compressors $\mathcal{C}_1$--$\mathcal{C}_3$ from both iteration numbers and communication bits. Notably, $\mathcal{C}_3$ enables Algorithm~\ref{alg.1} to achieve the highest communication efficiency despite introducing both relative and absolute compression errors. This indicates that by leveraging Assumption~\ref{ass3.YA} which offers more flexibility in compressors selection, Algorithm~\ref{alg.1} delivers better performance with fewer communication requirements. To further investigate how the communication interval length $\mathcal{H}$ impacts algorithm performance, Fig.~\ref{FF3} presents numerical results for $\mathcal{H}=3$, $\mathcal{H}=8$ and $\mathcal{H}=13$, evaluated by iteration numbers and communication bits. As seen in the figure, increasing $\mathcal{H}$, which reduces communication frequency, translates to lower communication cost but risks delayed convergence. This behavior reflects a fundamental trade-off between convergence rate and communication cost inherent in selecting $\mathcal{H}$.

		\vspace{-3mm}
	
	Next, we examine scenarios involving biased stochastic oracles, as shown in Fig.~\ref{FF4}, where distinct $\beta$ and $\sigma$ govern specific bias-variance trade-off affecting the rate and neighborhood of the convergence. By setting $\mathcal{H}=3$ and running Algorithm \ref{alg.1} with $\mathcal{C}_2$ and the algorithm in~\cite{BiaseTSP}, Fig.~\ref{FF4} highlights the error reaches the noise level determined by $\beta$ and $\sigma$. 
	As illustrated in Fig.~\ref{FF4}, increasing either the bias $\beta$ or the variance $\sigma$ proportionately enlarges the steady-state error neighborhood. This observation corroborates the theoretical bounds established in Theorem~\ref{Theo1}, where the convergence precision is strictly governed by these stochastic parameters. 
	Notably, the only existing distributed biased stochastic optimization algorithm in~\cite{BiaseTSP}, which adopts a similar biased oracle, is also tested in Fig.~\ref{FF4} with parameters selected as $\eta_t=\frac{0.08}{\sqrt{t+80}}$. As shown in Fig.~\ref{FF4}, Algorithm \ref{alg.1}, equipped with compressor $\mathcal{C}_2$ and a communication skipping mechanism, achieves convergence performance comparable to that of the algorithm in~\cite{BiaseTSP} while demonstrating superior communication efficiency, specifically requiring fewer communication rounds to reach convergence.

			\vspace{-3mm}
	\subsection{Strongly Convex Case}
			\vspace{-3mm}
	We now evaluate the effectiveness of the proposed distributed fixed-point iteration under contractive operators. For consistency, we adopt simulation settings similar to previous experiments, focusing on the distributed stochastic optimization problem (\ref{OppS}). Therein, each component function is strongly convex and is formulated as $f_1\left( x \right) =x^2+1.5x+0.9$, 
	$
	f_2\left( x \right) =0.4x^2+0.7e^x
	$, 
	$
	f_3\left( x \right) =0.2x^4+0.6x^2
	$, 
	$
	f_4\left( x \right) =x^2+1.5x+0.1
	$, 
	$
	f_5\left( x \right) =0.6x^2+0.3e^x
	$, 
	$
	f_6\left( x \right) =0.8x^4+0.4x^2
	$.

	\vspace{-3mm}
		
	As discussed in Section \ref{s2.B}, the corresponding operator $\mathcal{T}_i=Id-\tau \nabla f_i$ with $\tau=0.5$ is contractive. We first consider an unbiased stochastic oracle with $\beta=0$ and $\sigma=0.01$, 
	and we set parameters of Algorithm \ref{alg.1} and the algorithm in~\cite{Pushi} as $\eta_t=s_t=\frac{8}{t+500}$, $\gamma=0.8$, $\psi=0.99$,  $\mathcal{H}=3$ and $\alpha_x = \alpha_y = 0.5$, $\gamma_x = \gamma_y = 0.8$, $\eta = 0.005$, respectively.  
	Fig.~\ref{FF5} illustrates the evolution of the iteration numbers and communication bits for Algorithm~\ref{alg.1} and the algorithm in~\cite{Pushi}, employing compressors $\mathcal{C}_1$--$\mathcal{C}_3$. As depicted in Fig.~\ref{FF5}(a), both algorithms successfully converge under all three compression schemes. Notably, utilizing a constant step size, the algorithm in~\cite{Pushi} exhibits a more rapid initial descent. However, it inevitably oscillates and plateaus at a higher error neighborhood. In contrast, Algorithm~\ref{alg.1}, which employs a decaying step size,  ultimately achieves a more precise steady-state convergence. Furthermore, Fig.~\ref{FF5}(b) demonstrates that compressor $\mathcal{C}_3$ provides the most substantial savings in communication bits for both methods. Ultimately, Algorithm~\ref{alg.1} paired with $\mathcal{C}_3$ delivers the best overall performance, reaching the highest final accuracy with the lowest communication overhead, which fully validates its broad compatibility with general compressors.

		\vspace{-3mm}
	
	Fig.~\ref{FF6} benchmarks Algorithm~\ref{alg.1} against varying bias-variance configurations of stochastic noise for strongly convex objectives. Similar to the non-convex case, we observe that Algorithm~\ref{alg.1} convergence to a neighborhood of the optimal solution, whose size is governed by the bias $\beta$ and the variance $\sigma$. Despite this, it achieves faster convergence rates due to the strong convexity. Crucially, as the bias and variance approach zero, the steady-state error progressively diminishes, confirming our theoretical claim that the proposed framework recovers exact convergence under unbiased conditions.

	%
\begin{figure}[!t]
	\centering 	
	\subfloat[Evolutions with respect to the number of iterations]{\includegraphics[width=1.5in,height=1.2in]{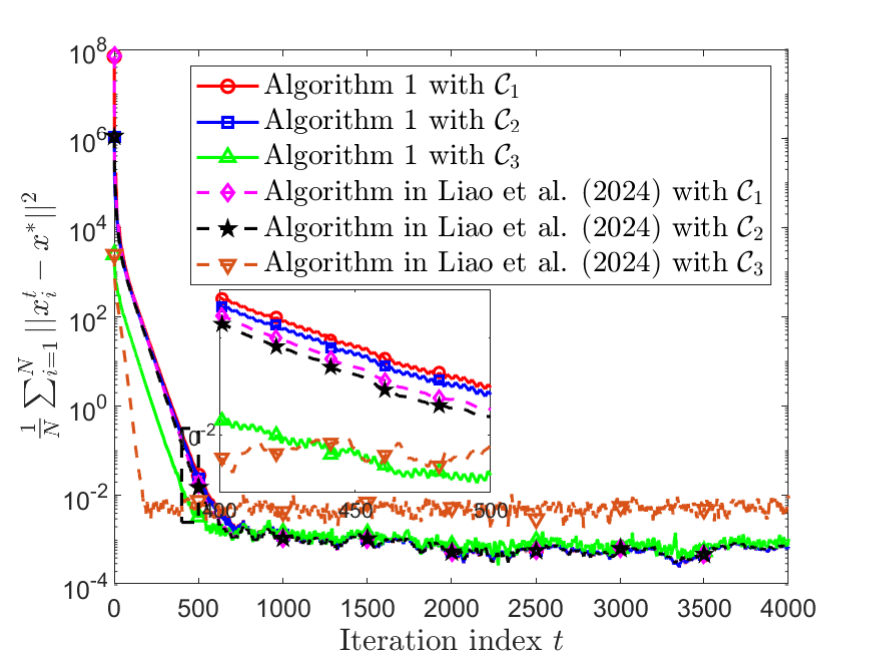}}	
	\quad \quad
	\subfloat[Evolutions with respect to communication bits]{\includegraphics[width=1.5in,height=1.2in]{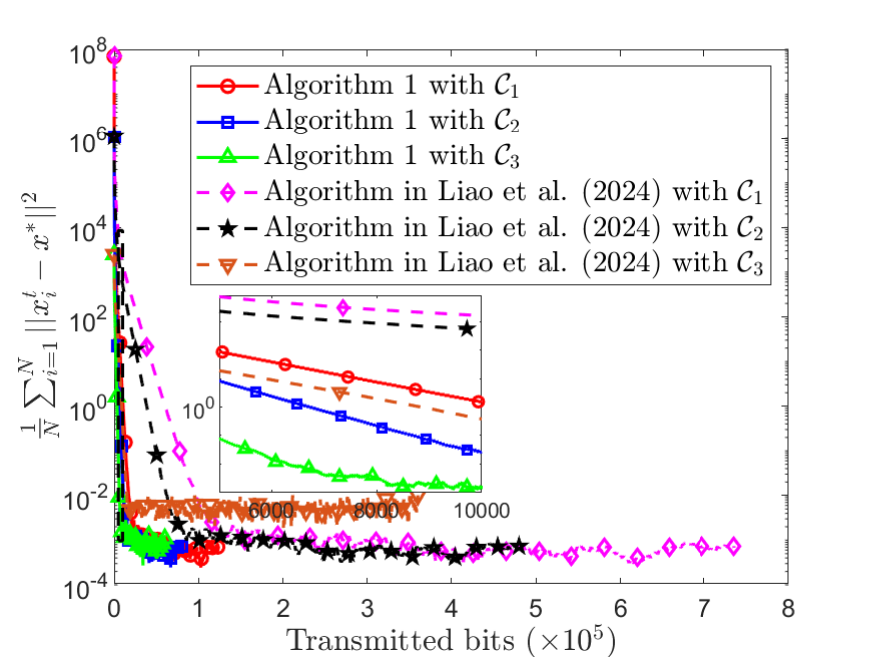}}	
	\caption{Evolutions of $\frac{1}{N} \sum_{i=1}^N \|x_i^t - x^*\|^2 $ for different algorithms equipped with various compressors in the strongly convex case.}
	\label{FF5}
\end{figure}

\begin{figure}[!t]
	\centering 	
	\subfloat[Comparisons under different biases settings]{\includegraphics[width=1.5in,height=1.2in]{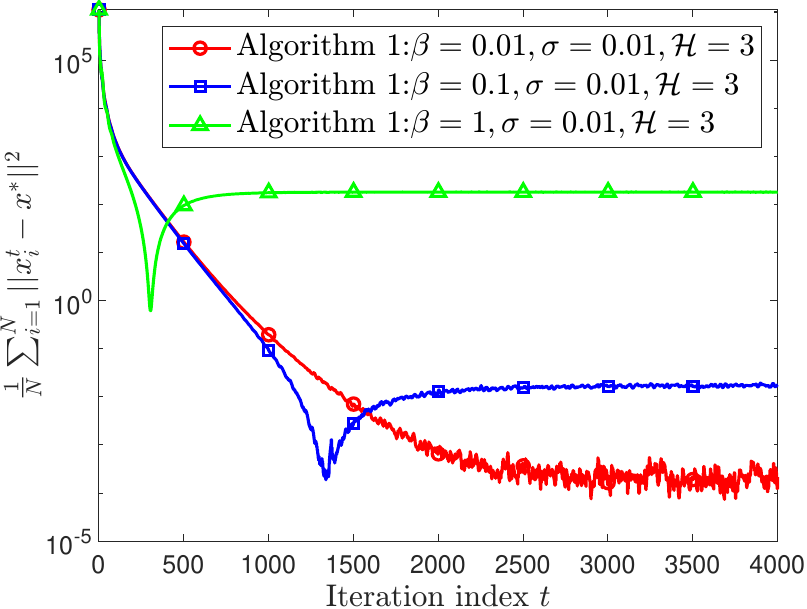}}
	\quad \quad
	\subfloat[Comparisons under different variance settings]{\includegraphics[width=1.5in,height=1.2in]{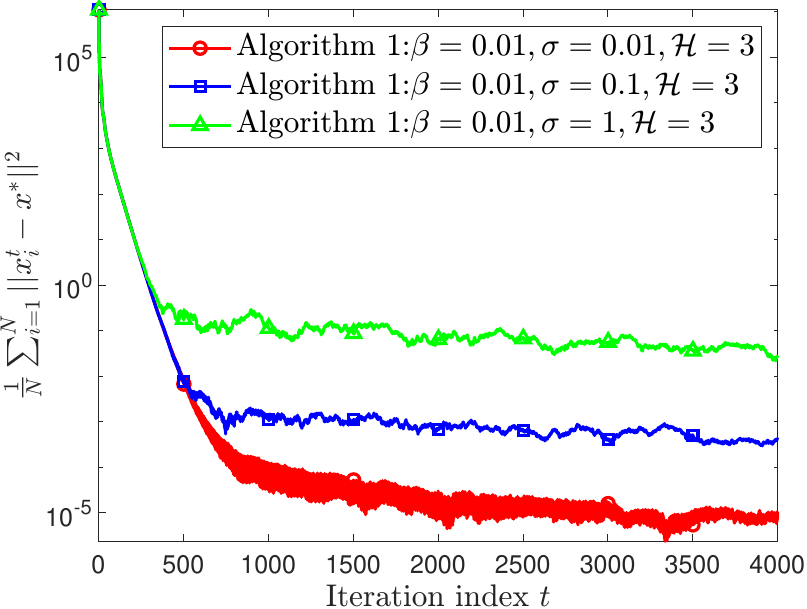}}
	\caption{Evolutions of $\frac{1}{N} \sum_{i=1}^N \|x_i^t - x^*\|^2 $ with different biased stochastic oracles in the strongly convex case.}
	\label{FF6}
\end{figure}

			\vspace{-4mm}
	\section{Conclusion}\label{ScCon}
			\vspace{-3mm}
	In this paper, we focused on the distributed fixed point seeking 
	problem for sum-separable operators. We proposed a distributed algorithm with communication compression and dynamic period skipping mechanisms. The proposed  framework advances the theoretical limits of distributed fixed  point iterations by supporting expansive operators and biased 
	stochastic settings. Moreover, we generalized the convergence analysis  to accommodate relaxed growth bias and variance conditions, 
	bridging the gap between centralized and distributed settings. 
	Finally, the effectiveness of the algorithm was illustrated by numerical simulations. Future work will focus on enhancing the computational efficiency of accelerated fixed-point iterations and developing online learning frameworks for time-varying operators.

	\ifbool{submitA}{
	}{\vspace{-3mm}
\section{Appendix}
	\vspace{-3mm}

	To facilitate subsequent analysis, we reformulate the algorithm into a compact form. For this purpose, some stacked variables and their averages are introduced below 		\vspace{-3mm}
	\begin{align*}
		& \mathbf{X}^t=\left[x_1^t,\ldots,x_N^t\right]^{\top} \in \mathbb{R}^{N \times n}, \widehat{\mathbf{X}}^t=\left[\hat{x}_1^t, \ldots, \hat{x}_N^t\right]^{\top} \in \mathbb{R}^{N \times n}, \\
		& \bar{{x}}^t=\frac{1}{N} \sum_{i=1}^N {x}_i^t \in \mathbb{R}^n, \overline{\mathbf{X}}^t=\left[\bar{x}^t, \cdots, \bar{x}^t\right]^\top \in \mathbb{R}^{N \times n}, \\
		&\mathbf{Z}^t=\left[z_1^t,\ldots,z_N^t\right]^{\top} \in \mathbb{R}^{N \times n},
		\bar{{z}}^t=\frac{1}{N} \sum_{i=1}^N {z}_i^t \in \mathbb{R}^n,\\ &\overline{\mathbf{Z}}^t=\left[\bar{z}^t, \cdots, \bar{z}^t\right]^\top \in \mathbb{R}^{N \times n}, \\
		& \widetilde{\mathcal{T}}(\mathbf{X}^t,\boldsymbol{\xi}^t)=\left[\widetilde{\mathcal{T}}_1\left(x_1^t, \xi_1^t\right), \ldots, \widetilde{\mathcal{T}}_N\left(x_N^t,\xi_N^t\right)\right]^{\top} \in \mathbb{R}^{N \times n}.
	\end{align*}
			\vspace{-7mm}
	
	Based on the above notations, consider Algorithm~\ref{alg.1} with indices given by $\mathcal{I}_{T}=\{\mathcal{I}_{(1)},\mathcal{I}_{(2)},\ldots,\mathcal{I}_{(k)},\ldots\}$. Then the updates from index $\mathcal{I}_{(t)}$ to $\mathcal{I}_{(t+1)}$ can be written in the following compact form		\vspace{-3mm}
	\begin{subequations}\label{jinchou}
		\begin{align}
			\mathbf{Z}^{\mathcal{I}_{(t+1)}}=&\mathbf{X}^{\mathcal{I}_{ (t) }}-\sum_{t'=\mathcal{I}_{(t)}}^{\mathcal{I}_{(t+1)}-1}{\eta_{t'}}({\mathbf{X}^{t'}}
			- {\widetilde{\mathcal{T}}( \mathbf{X}^{ t' },\boldsymbol{\xi }^{t'})}), \label{aa1}\\
			\mathbf{X}^{\mathcal{I}_{(t+1)  }}=&\mathbf{Z}^{\mathcal{I}_{(t+1)}}+\gamma ( \mathbf{W}-\mathbf{I}) \widehat{\mathbf{X}}^{\mathcal{I}_{(t)}}\label{aa3},\\
			\widehat{\mathbf{X}}^{\mathcal{I}_{(t+1)}}=&\widehat{\mathbf{X}}^{\mathcal{I}_{(t)}}+\psi{s}_{ \mathcal{I}_{(t)} }\mathcal{C}( ( \mathbf{X}^{\mathcal{I}_{ (t+1) }}-\widehat{\mathbf{X}}^{ \mathcal{I}_{(t)} } ) /{s}_{ \mathcal{I}_{(t)} } ).\label{aa2}
		\end{align}
	\end{subequations}
			\vspace{-7mm}

			\vspace{-3mm}
	\subsection{Useful Lemmas}		\vspace{-3mm}
	In this section, we provide some supporting lemmas and recurrence results which will be used in the convergence analysis of Algorithm \ref{alg.1}.
	
	We now introduce a lemma that bounds the expected norm of the iterates.
	\begin{lemma}\label{lee.2}
		Under Assumptions \ref{ass5.Un} and \ref{ass4.YJ}, for any $t\ge 1$, the sequence $\{\mathbf{X}^{\mathcal{I}_{(t)}}\}$ of Algorithm~\ref{alg.1} satisfies 
		\begin{align*}
			\mathbb{E}\left[\left\|\mathbf{Z}^{\mathcal{I}_{(t+1)}}-\mathbf{X}^{\mathcal{I}_{(t)}}\right\|_{\text{F}}^2\right]\leq& N  \mathcal{D}^2\mathcal{H}^2\eta_{\mathcal{I}_{(t)}}^2,\\ \mathbb{E}\left[\left\|\overline{\mathbf{Z}}^{\mathcal{I}_{(t+1)}}-\overline{\mathbf{X}}^{\mathcal{I}_{(t)}}\right\|_{\text{F}}^2\right]\leq&  {N\mathcal{D}^2\mathcal{H}^2\eta_{\mathcal{I}_{(t)}}^2},\\ 
			\mathbb{E}\left[ \left\|\overline{\mathbf{X}}^{\mathcal{I}_{(t+1)}}-\overline{\mathbf{X}}^{\mathcal{I}_{(t)}}\right\|_{\text{F}}^2\right]\leq&  {N\mathcal{D}^2\mathcal{H}^2\eta_{\mathcal{I}_{(t)}}^2}.
		\end{align*}
	\end{lemma}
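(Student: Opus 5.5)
The plan is to work directly from the compact form \eqref{jinchou}. The first bound comes from a Cauchy--Schwarz estimate over one communication block, combined with Assumption~\ref{ass4.YJ} applied conditionally. The second follows because averaging is a norm-one projection. The third follows from the double stochasticity in Assumption~\ref{ass5.Un}.

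\emph{First bound.} Between two consecutive communication instants no consensus step is performed, so $x_i^{t'+1}=z_i^{t'+1}=x_i^{t'}-\eta_{t'}(x_i^{t'}-\widetilde{\mathcal{T}}_i(x_i^{t'},\xi_i^{t'}))$ for $t'\in\{\mathcal{I}_{(t)},\ldots,\mathcal{I}_{(t+1)}-2\}$. Telescoping gives \eqref{aa1}:
\begin{align*}
\mathbf{Z}^{\mathcal{I}_{(t+1)}}-\mathbf{X}^{\mathcal{I}_{(t)}}=-\sum_{t'=\mathcal{I}_{(t)}}^{\mathcal{I}_{(t+1)}-1}\eta_{t'}\big(\mathbf{X}^{t'}-\widetilde{\mathcal{T}}(\mathbf{X}^{t'},\boldsymbol{\xi}^{t'})\big).
\end{align*}
The sum has $\mathcal{I}_{(t+1)}-\mathcal{I}_{(t)}\le\mathcal{H}$ terms. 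Using $\|\sum_{k=1}^{m}a_k\|_{\text{F}}^2\le m\sum_{k=1}^{m}\|a_k\|_{\text{F}}^2$ with $m\le \mathcal{H}$, I bound the squared Frobenius norm by $\mathcal{H}\sum_{t'}\eta_{t'}^2\sum_{i=1}^N\|x_i^{t'}-\widetilde{\mathcal{T}}_i(x_i^{t'},\xi_i^{t'})\|^2$.

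Next I take expectations and use the tower property. I condition on the history up to $t'$, which fixes $x_i^{t'}$; since $\xi_i^{t'}$ is sampled afresh, Assumption~\ref{ass4.YJ} gives $\mathbb{E}[\|\widetilde{\mathcal{T}}_i(x_i^{t'},\xi_i^{t'})-x_i^{t'}\|^2\mid x_i^{t'}]\le\mathcal{D}^2$. This holds uniformly in $x$, so no bound on the iterates is needed.

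Finally, the step sizes $\eta_t=b/\sqrt{t+a}$ (or $b/(t+a)$) are nonincreasing, so $\eta_{t'}\le\eta_{\mathcal{I}_{(t)}}$ for every $t'\ge\mathcal{I}_{(t)}$. Combining,
\begin{align*}
\mathbb{E}\big[\|\mathbf{Z}^{\mathcal{I}_{(t+1)}}-\mathbf{X}^{\mathcal{I}_{(t)}}\|_{\text{F}}^2\big]\le \mathcal{H}\cdot\mathcal{H}\cdot N\mathcal{D}^2\eta_{\mathcal{I}_{(t)}}^2 .
\end{align*}

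\emph{Second bound.} Write $\overline{\mathbf{Z}}-\overline{\mathbf{X}}=\frac{1}{N}\mathbf{1}_N\mathbf{1}_N^\top(\mathbf{Z}-\mathbf{X})$. The matrix $\frac{1}{N}\mathbf{1}_N\mathbf{1}_N^\top$ is an orthogonal projection, so its spectral norm is $1$. Hence $\|\overline{\mathbf{Z}}^{\mathcal{I}_{(t+1)}}-\overline{\mathbf{X}}^{\mathcal{I}_{(t)}}\|_{\text{F}}\le\|\mathbf{Z}^{\mathcal{I}_{(t+1)}}-\mathbf{X}^{\mathcal{I}_{(t)}}\|_{\text{F}}$, and the first bound transfers directly.

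\emph{Third bound.} Multiplying \eqref{aa3} on the left by $\frac{1}{N}\mathbf{1}_N\mathbf{1}_N^\top$ and using $\mathbf{1}_N^\top(\mathbf{W}-\mathbf{I})=0$, which holds because $\mathbf{W}$ is doubly stochastic (Assumption~\ref{ass5.Un}), gives $\overline{\mathbf{X}}^{\mathcal{I}_{(t+1)}}=\overline{\mathbf{Z}}^{\mathcal{I}_{(t+1)}}$. The third inequality is therefore identical to the second.

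The only delicate point is the conditioning in the first bound. The stochastic oracle is evaluated at random, history-dependent iterates, so Assumption~\ref{ass4.YJ} must be invoked conditionally on $\mathcal{F}^{t'}$ before taking total expectation. Because the bound $\mathcal{D}^2$ is uniform in $x$, this is straightforward, and the remaining steps are routine.
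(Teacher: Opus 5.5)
Your proof is correct and follows the paper's argument: you telescope the local KM steps into \eqref{aa1}, bound the block sum of at most $\mathcal{H}$ terms by Cauchy--Schwarz together with Assumption~\ref{ass4.YJ} applied conditionally, pass to the averages because averaging does not increase the Frobenius norm, and use $\mathbf{1}_N^\top(\mathbf{W}-\mathbf{I})=\mathbf{0}^\top$ to get $\overline{\mathbf{X}}^{\mathcal{I}_{(t+1)}}=\overline{\mathbf{Z}}^{\mathcal{I}_{(t+1)}}$. Your order of steps, applying Cauchy--Schwarz with $\eta_{t'}$ still inside the sum and only then using $\eta_{t'}\le\eta_{\mathcal{I}_{(t)}}$, is actually the rigorous form of the first inequality in \eqref{K1}, where the paper factors $\eta_{\mathcal{I}_{(t)}}^2$ directly out of the norm of the sum.
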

		\vspace{-7mm}
		\begin{proof}
			First, invoking (\ref{aa1}) and Assumption \ref{ass4.YJ} yields
			\begin{align}\label{K1}
				&\mathbb{E}\left[\left\|\mathbf{Z}^{\mathcal{I}_{(t+1)}}-\mathbf{X}^{\mathcal{I}_{(t)}}\right\|_{\text{F}}^2\right]\nonumber\\
				\le&\sum_{i=1}^N \eta_{\mathcal{I}_{(t)}}^2 \mathbb{E}\left[\left\|\sum_{t'=\mathcal{I}_{(t)}}^{\mathcal{I}_{(t+1)}-1} {x_{i}^{t'}}- \widetilde{\mathcal{T}}_i\left( x_{i}^{t'},\xi _{i}^{t'} \right)\right\|^2\right] \nonumber\\
				\textstyle\leq& N  \mathcal{D}^2\mathcal{H}^2\eta_{\mathcal{I}_{(t)}}^2.
			\end{align}
			Moreover, by the definition of $\overline{\mathbf{X}}^t$ and (\ref{aa1}), we get
			\begin{align}\label{K2}
				&{ \mathbb{E}\left[\left\|\overline{\mathbf{Z}}^{\mathcal{I}_{(t+1)}}-\overline{\mathbf{X}}^{\mathcal{I}_{(t)}}\right\|_{\text{F}}^2\right]} \nonumber \\
				& =  N\mathbb{E}\left[\left\|\frac{1}{N}\sum_{i=1}^N {z}_i^{\mathcal{I}_{(t+1)}}-\frac{1}{N}\sum_{i=1}^N {x}_i^{\mathcal{I}_{(t)}}\right\|^2\right]  \nonumber \\
				& \le{\eta_{\mathcal{I}_{(t)}}^2}\mathbb{E}\left[\sum_{i=1}^{N} \left\|\sum_{t'=\mathcal{I}_{(t)}}^{\mathcal{I}_{(t+1)}-1} x_{i}^{t'}- \widetilde{\mathcal{T}}_i\left( x_{i}^{t'},\xi _{i}^{t'} \right)\right\|^2\right] \nonumber \\
				&\leq {N\mathcal{D}^2\mathcal{H}^2\eta_{\mathcal{I}_{(t)}}^2} ,
			\end{align}
			where the second inequality holds since the Cauchy--Schwarz inequality. 
			In addition, applying $\mathbf{1}_N^\top(\mathbf{W}-\mathbf{I})=\mathbf{0}^\top$ and (\ref{aa3}) implies
			\begin{align}\label{K3}
				\mathbb{E}\left[\left\|\overline{\mathbf{X}}^{\mathcal{I}_{(t+1)}}-\overline{\mathbf{X}}^{\mathcal{I}_{(t)}}\right\|_{\text{F}}^2 \right]\leq {N\mathcal{D}^2\mathcal{H}^2\eta_{\mathcal{I}_{(t)}}^2}. 
			\end{align}
			
			Combining the above analysis completes the proof.
		\end{proof}
	 		\vspace{-3mm}

	Next, we present a lemma that derives a recursive inequality for the consensus error. 
	\begin{lemma}\label{lee.3} 		\vspace{-3mm}
		Under Assumption \ref{ass5.Un}, for any $t\ge1$, there exist some positive constants $\mu_i, i \in \{1,2,3\}$, such that the sequence $\{\mathbf{X}^{\mathcal{I}_{(t)}}\}$ generated by Algorithm \ref{alg.1} satisfies
		\begin{align}\label{LL0}
			&\mathbb{E}\left[\left\|\mathbf{X}^{\mathcal{I}_{(t+1)}}-\overline{\mathbf{X}}^{\mathcal{I}_{(t+1)}}\right\|_{\text{F}}^2\right] \nonumber\\
			&\leq \omega_1\mathbb{E}\left[\left\|\widehat{\mathbf{X}}^{\mathcal{I}_{(t)}}-\mathbf{X}^{\mathcal{I}_{(t)}}\right\|_{\text{F}}^2\right] 
			+\omega_2\mathbb{E}\left[\left\|\mathbf{X}^{\mathcal{I}_{(t)}}-\overline{\mathbf{X}}^{\mathcal{I}_{(t)}}\right\|_{\text{F}}^2\right] \nonumber\\
			&\quad +\omega_3 N \mathcal{D}^2\mathcal{H}^2\eta_{\mathcal{I}_{(t)}}^2,
		\end{align}
		where $\omega_1 := (1+\mu_1)(1+\mu_2)\gamma^2 {\alpha}^2$, $\omega_2 := (1+\mu_1^{-1})(1+\mu_3^{-1})(1-\kappa \gamma)^2$ and $\omega_3 := (1+\mu_1)(1+\mu_2^{-1})\gamma^2 {\alpha}^2 + 3(1+\mu_1^{-1})(1+\mu_3)[(1-\kappa \gamma)^2 + (1-\kappa \gamma +\gamma)^2 + \gamma^2]$.
	\end{lemma}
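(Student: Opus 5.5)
The plan is to write the consensus error at the communication instant $\mathcal{I}_{(t+1)}$ as one linear expression, split it with Young's inequality, and bound each piece using the spectral properties of $\mathbf{W}$ together with Lemma~\ref{lee.2}. Let $\mathbf{J}=\frac{1}{N}\mathbf{1}_N\mathbf{1}_N^\top$. Double stochasticity (Assumption~\ref{ass5.Un}) gives $\mathbf{J}(\mathbf{W}-\mathbf{I})=\mathbf{0}$, so $(\mathbf{I}-\mathbf{J})(\mathbf{W}-\mathbf{I})=\mathbf{W}-\mathbf{I}$ and $\overline{\mathbf{X}}^{\mathcal{I}_{(t+1)}}=\mathbf{J}\mathbf{Z}^{\mathcal{I}_{(t+1)}}$. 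Combining this with (\ref{aa3}), and inserting $\pm\gamma(\mathbf{W}-\mathbf{I})\mathbf{X}^{\mathcal{I}_{(t)}}$ and $\pm\gamma(\mathbf{W}-\mathbf{I})\mathbf{Z}^{\mathcal{I}_{(t+1)}}$, I will write
\begin{align*}
\mathbf{X}^{\mathcal{I}_{(t+1)}}-\overline{\mathbf{X}}^{\mathcal{I}_{(t+1)}}
=&\,\gamma(\mathbf{W}-\mathbf{I})\big(\widehat{\mathbf{X}}^{\mathcal{I}_{(t)}}-\mathbf{X}^{\mathcal{I}_{(t)}}\big)
-\gamma(\mathbf{W}-\mathbf{I})\big(\mathbf{Z}^{\mathcal{I}_{(t+1)}}-\mathbf{X}^{\mathcal{I}_{(t)}}\big)\\
&+(\mathbf{I}-\mathbf{J})\big((1-\gamma)\mathbf{I}+\gamma\mathbf{W}\big)\mathbf{Z}^{\mathcal{I}_{(t+1)}}.
\end{align*}
The first two terms form a group $A$ and the last term is $B$. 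I first apply $\|A+B\|^2\le(1+\mu_1)\|A\|^2+(1+\mu_1^{-1})\|B\|^2$.

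For group $A$, I apply Young once more with parameter $\mu_2$ and use $\|\mathbf{W}-\mathbf{I}\|_2=\alpha$. This gives $(1+\mu_1)(1+\mu_2)\gamma^2\alpha^2\|\widehat{\mathbf{X}}^{\mathcal{I}_{(t)}}-\mathbf{X}^{\mathcal{I}_{(t)}}\|_{\text{F}}^2$, which is exactly the $\omega_1$ term. It also gives $(1+\mu_1)(1+\mu_2^{-1})\gamma^2\alpha^2\|\mathbf{Z}^{\mathcal{I}_{(t+1)}}-\mathbf{X}^{\mathcal{I}_{(t)}}\|_{\text{F}}^2$, and after taking expectations the first bound of Lemma~\ref{lee.2} turns this into the first summand of $\omega_3$ times $N\mathcal{D}^2\mathcal{H}^2\eta_{\mathcal{I}_{(t)}}^2$.

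For $B$, I substitute $\mathbf{Z}^{\mathcal{I}_{(t+1)}}=\mathbf{X}^{\mathcal{I}_{(t)}}+(\mathbf{Z}^{\mathcal{I}_{(t+1)}}-\mathbf{X}^{\mathcal{I}_{(t)}})$. Because $\mathbf{W}$ commutes with $\mathbf{J}$, the matrix $\mathbf{M}_\gamma=(\mathbf{I}-\mathbf{J})((1-\gamma)\mathbf{I}+\gamma\mathbf{W})$ acts on $\mathbf{X}^{\mathcal{I}_{(t)}}-\overline{\mathbf{X}}^{\mathcal{I}_{(t)}}$ with eigenvalues $1-\gamma+\gamma\lambda_k$, $k\ge2$. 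For $\gamma\le1$ (implied by (\ref{Gma})) these have modulus at most $1-\gamma+\gamma(1-\kappa)=1-\kappa\gamma$. A Young split with $\mu_3$ then gives $(1+\mu_1^{-1})(1+\mu_3^{-1})(1-\kappa\gamma)^2\|\mathbf{X}^{\mathcal{I}_{(t)}}-\overline{\mathbf{X}}^{\mathcal{I}_{(t)}}\|_{\text{F}}^2$, which is the $\omega_2$ term. The remaining drift part is weighted by $(1+\mu_1^{-1})(1+\mu_3)$. I will decompose it as the sum of three matrices, acting respectively on $\mathbf{Z}^{\mathcal{I}_{(t+1)}}-\mathbf{X}^{\mathcal{I}_{(t)}}$, on $\overline{\mathbf{Z}}^{\mathcal{I}_{(t+1)}}-\overline{\mathbf{X}}^{\mathcal{I}_{(t)}}$, and on a $\gamma$-weighted piece. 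I will bound it with $\|a+b+c\|^2\le3(\|a\|^2+\|b\|^2+\|c\|^2)$, using crude operator norms: $1-\kappa\gamma$ for the restriction of $\mathbf{M}_\gamma$, $1-\kappa\gamma+\gamma$ (a triangle-inequality bound on the non-projected matrix), and $\gamma$. The first two bounds of Lemma~\ref{lee.2} then make each piece at most its squared coefficient times $N\mathcal{D}^2\mathcal{H}^2\eta_{\mathcal{I}_{(t)}}^2$, which yields the second summand of $\omega_3$. Taking total expectation and collecting terms gives (\ref{LL0}).

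I expect the main obstacle to be the spectral contraction step. The bound $\|(\mathbf{I}-\mathbf{J})((1-\gamma)\mathbf{I}+\gamma\mathbf{W})(\mathbf{X}-\overline{\mathbf{X}})\|_{\text{F}}\le(1-\kappa\gamma)\|\mathbf{X}-\overline{\mathbf{X}}\|_{\text{F}}$ uses symmetric-style eigenvalue reasoning. For a merely doubly stochastic $\mathbf{W}$, I will instead use convexity: $\|(1-\gamma)\mathbf{I}+\gamma(\mathbf{W}-\mathbf{J})\|_2\le 1-\gamma+\gamma\|\mathbf{W}-\mathbf{J}\|_2=1-\kappa\gamma$ on the orthogonal complement of $\mathbf{1}_N$, using the stated identity $1-\kappa=\|\mathbf{W}-\mathbf{J}\|$. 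The rest is careful bookkeeping so that the three-way split reproduces exactly the constants in $\omega_3$.
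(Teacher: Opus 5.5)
Your proposal is correct and is essentially the paper's proof: the same decomposition into $\gamma(\mathbf{W}-\mathbf{I})(\widehat{\mathbf{X}}^{\mathcal{I}_{(t)}}-\mathbf{Z}^{\mathcal{I}_{(t+1)}})$ plus the mixing term, Young's inequality with $\mu_1,\mu_2,\mu_3$ at the same places, the bound $\|\mathbf{W}-\frac{1}{N}\mathbf{1}_N\mathbf{1}_N^\top\|_2=1-\kappa$, and Lemma~\ref{lee.2}. Two minor remarks follow. First, your projection leaves only the drift $\mathbf{M}_\gamma(\mathbf{Z}^{\mathcal{I}_{(t+1)}}-\mathbf{X}^{\mathcal{I}_{(t)}})$ with $\|\mathbf{M}_\gamma\|_2\le 1-\kappa\gamma$, so the three-way split is unnecessary: the paper's three terms come only from carrying the average increments $\overline{\mathbf{X}}^{\mathcal{I}_{(t)}}-\overline{\mathbf{X}}^{\mathcal{I}_{(t+1)}}$ and $\overline{\mathbf{Z}}^{\mathcal{I}_{(t+1)}}-\overline{\mathbf{X}}^{\mathcal{I}_{(t)}}$, which your projection kills, and your coefficient $(1-\kappa\gamma)^2$ is already dominated by the second summand of $\omega_3$. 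Second, the restriction $\gamma\le 1$ (which the paper also uses silently) is not in general implied by (\ref{Gma}); for example, $\mathbf{W}=(1-\epsilon)\mathbf{I}+\frac{\epsilon}{N}\mathbf{1}_N\mathbf{1}_N^\top$ with small $\epsilon$, $\varphi=1$, $\psi=1/r$ permits $\gamma>1$, so it should be imposed explicitly.
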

		\begin{proof} By (\ref{aa3}) and taking expectations, simple calculations give rise to 
			\begin{align}\label{LL1}
				& \mathbb{E}\Big[\big\|\mathbf{X}^{\mathcal{I}_{(t+1)}}-\overline{\mathbf{X}}^{\mathcal{I}_{(t+1)}}\big\|_{\text{F}}^2\Big] \nonumber\\
				& =\mathbb{E}\Big[\big\|\mathbf{Z}^{\mathcal{I}_{(t+1)}}+\gamma(\mathbf{W}-\mathbf{I}) \widehat{\mathbf{X}}^{\mathcal{I}_{(t)}}-\overline{\mathbf{X}}^{\mathcal{I}_{(t+1)}}\big\|_{\text{F}}^2\Big] \nonumber\\
				& =\mathbb{E}\bigg[\Big\|\gamma(\mathbf{W}-\mathbf{I})\big(\widehat{\mathbf{X}}^{\mathcal{I}_{(t)}}-\mathbf{Z}^{\mathcal{I}_{(t+1)}}\big)\nonumber\\
				&\quad \ \ +(\gamma \mathbf{W}+(1-\gamma) \mathbf{I})\big(\mathbf{Z}^{\mathcal{I}_{(t+1)}}-\overline{\mathbf{X}}^{\mathcal{I}_{(t+1)}}\big)\Big\|_{\text{F}}^2\bigg] \nonumber\\
				& \leq (1+\mu_1) \gamma^2\mathbb{E}\Big[\big\|(\mathbf{W}-\mathbf{I})\big(\widehat{\mathbf{X}}^{\mathcal{I}_{(t)}}-\mathbf{Z}^{\mathcal{I}_{(t+1)}}\big)\big\|_{\text{F}}^2\Big] \nonumber\\
				& \quad +(1+\mu_1^{-1})\mathbb{E}\Big[\big\|(\gamma \mathbf{W}+(1-\gamma) \mathbf{I})\big(\mathbf{Z}^{\mathcal{I}_{(t+1)}}-\overline{\mathbf{X}}^{\mathcal{I}_{(t+1)}}\big)\big\|_{\text{F}}^2\Big]\nonumber\\
				& \leq (1+\mu_1) \gamma^2 \alpha^2\mathbb{E}\Big[\big\|\widehat{\mathbf{X}}^{\mathcal{I}_{(t)}}-\mathbf{Z}^{\mathcal{I}_{(t+1)}}\big\|_{\text{F}}^2\Big]\nonumber\\
				& \quad +(1+\mu_1^{-1})\mathbb{E}\Big[\big\|(\gamma \mathbf{W}+(1-\gamma) \mathbf{I})\big(\mathbf{Z}^{\mathcal{I}_{(t+1)}}-\overline{\mathbf{X}}^{\mathcal{I}_{(t+1)}}\big)\big\|_{\text{F}}^2\Big].
			\end{align}
			where the second equality follows from $(\mathbf{W}-\mathbf{I})\overline{\mathbf{X}}^{\mathcal{I}_{(t)}}=\mathbf{0}$; the first inequality invokes $\|\mathbf{A}_1+\mathbf{A}_2\|_{\text{F}}^2 \leq (1+\mu_1)\|\mathbf{A}_1\|_{\text{F}}^2+(1+\mu_1^{-1})\|\mathbf{A}_2\|_{\text{F}}^2$ for $\mu_1>0$; and the second inequality is due to $\|\mathbf{A}_1 \mathbf{A}_2\|_{\text{F}} \leq\|\mathbf{A}_1\|_2\|\mathbf{A}_2\|_{\text{F}}$ and $\alpha=\|\mathbf{W}-\mathbf{I}\|_2$.

			In the sequel, for each term in (\ref{LL1}), we have
			\begin{align}\label{LL2}
				&\mathbb{E}\left[\left\|\widehat{\mathbf{X}}^{\mathcal{I}_{(t)}}-\mathbf{Z}^{\mathcal{I}_{(t+1)}}\right\|_{\text{F}}^2\right]
				\le \left(1+\mu_2\right)\mathbb{E}\left[\left\|\widehat{\mathbf{X}}^{\mathcal{I}_{(t)}}-\mathbf{X}^{\mathcal{I}_{(t)}}\right\|_{\text{F}}^2\right] \nonumber\\
				&+\left(1+\mu_2^{-1}\right)\mathbb{E}\left[\left\|\mathbf{X}^{\mathcal{I}_{(t)}}-\mathbf{Z}^{\mathcal{I}_{(t+1)}}\right\|_{\text{F}}^2\right].
			\end{align}
			Further, consider the term inside the expectation of the second part in (\ref{LL1}), we have
			\begin{align}\label{LL3}
				&\left\|(\gamma \mathbf{W}+(1-\gamma) \mathbf{I})\left(\mathbf{Z}^{\mathcal{I}_{(t+1)}}-\overline{\mathbf{X}}^{\mathcal{I}_{(t+1)}}\right)\right\|_{\text{F}} \nonumber\\
				\leq&(1-\gamma)\left\|\mathbf{Z}^{\mathcal{I}_{(t+1)}}-\overline{\mathbf{X}}^{\mathcal{I}_{(t+1)}}\right\|_{\text{F}}\nonumber\\
				&+\gamma\left\|\mathbf{W}-\frac{1}{N}\mathbf{1}_N\mathbf{1}_N^\top\right\|_{2}\left\|\mathbf{Z}^{\mathcal{I}_{(t+1)}}-\overline{\mathbf{X}}^{\mathcal{I}_{(t+1)}}\right\|_{\text{F}}\nonumber\\
				& +\gamma\left\|\overline{\mathbf{Z}}^{\mathcal{I}_{(t+1)}}-\overline{\mathbf{X}}^{\mathcal{I}_{(t+1)}}\right\|_{\text{F}} \nonumber\\
				{\le}&(1-\kappa \gamma)\left\|\mathbf{Z}^{\mathcal{I}_{(t+1)}}-\overline{\mathbf{X}}^{\mathcal{I}_{(t+1)}}\right\|_{\text{F}}+\gamma\left\|\overline{\mathbf{Z}}^{\mathcal{I}_{(t+1)}}-\overline{\mathbf{X}}^{\mathcal{I}_{(t+1)}}\right\|_{\text{F}}\nonumber\\
				{\le}&(1-\kappa \gamma)\left\|\mathbf{X}^{\mathcal{I}_{(t)}}-\overline{\mathbf{X}}^{\mathcal{I}_{(t)}}\right\|_{\text{F}}+(1-\kappa \gamma)\left\|\mathbf{Z}^{\mathcal{I}_{(t+1)}}-\mathbf{X}^{\mathcal{I}_{(t)}}\right\|_{\text{F}}\nonumber \\
				&+(1-\kappa\gamma+\gamma)\left\|\overline{\mathbf{X}}^{\mathcal{I}_{(t)}}-\overline{\mathbf{X}}^{\mathcal{I}_{(t+1)}}\right\|_{\text{F}}\nonumber\\
				&+\gamma\left\|\overline{\mathbf{Z}}^{\mathcal{I}_{(t+1)}}-\overline{\mathbf{X}}^{\mathcal{I}_{(t)}}\right\|_{\text{F}}, 
			\end{align}
			where the first inequality is due to $\frac{1}{N}\mathbf{1}_N\mathbf{1}_N^\top \mathbf{Z}^{\mathcal{I}_{(t+1)}} =\overline{\mathbf{Z}}^{\mathcal{I}_{(t+1)}}$. 
			Building on it, taking the square and then the expectation, one can further obtain that
			\begin{align}\label{LL4}
				& \mathbb{E}\left[\left\|(\gamma \mathbf{W}+(1-\gamma) \mathbf{I})\left(\mathbf{Z}^{\mathcal{I}_{(t+1)}}-\overline{\mathbf{X}}^{\mathcal{I}_{(t+1)}}\right)\right\|_{\text{F}}^2\right]\nonumber \\
				\le&\left(1+\mu_3^{-1}\right)(1-\kappa \gamma)^2 \mathbb{E}\left[\left\|\mathbf{X}^{\mathcal{I}_{(t)}}-\overline{\mathbf{X}}^{\mathcal{I}_{(t)}}\right\|_{\text{F}}^2\right] \nonumber \\
				&+(1+\mu_3)\mathbb{E}\left[\left((1-\kappa \gamma)\left\|\mathbf{Z}^{\mathcal{I}_{(t+1)}}-\mathbf{X}^{\mathcal{I}_{(t)}}\right\|_{\text{F}}\nonumber +(1-\kappa\gamma+\gamma)\cdot\right.\right. \\
				&\left.\left.\left\|\overline{\mathbf{X}}^{\mathcal{I}_{(t)}}-\overline{\mathbf{X}}^{\mathcal{I}_{(t+1)}}\right\|_{\text{F}}+\gamma\left\|\overline{\mathbf{Z}}^{\mathcal{I}_{(t+1)}}-\overline{\mathbf{X}}^{\mathcal{I}_{(t)}}\right\|_{\text{F}} \right)^2\right]\nonumber \\
				\le&(1+\mu_3^{-1})(1-\kappa \gamma)^2 \mathbb{E}\left[\left\|\mathbf{X}^{\mathcal{I}_{(t)}}-\overline{\mathbf{X}}^{\mathcal{I}_{(t)}}\right\|_{\text{F}}^2\right] \nonumber \\
				&+3\left(1+\mu_3\right)(1-\kappa \gamma)^2 \mathbb{E}\left[\left\|\mathbf{Z}^{\mathcal{I}_{(t+1)}}-\mathbf{X}^{\mathcal{I}_{(t)}}\right\|_{\text{F}}^2\right] \nonumber \\
				&+3\left(1+\mu_3\right)(1-\kappa \gamma+\gamma)^2 \mathbb{E}\left[\left\|\overline{\mathbf{X}}^{\mathcal{I}_{(t)}}-\overline{\mathbf{X}}^{\mathcal{I}_{(t+1)}}\right\|_{\text{F}}^2\right] \nonumber \\
				&+3\left(1+\mu_3\right) \gamma^2 \mathbb{E}\left[\left\|\overline{\mathbf{Z}}^{\mathcal{I}_{(t+1)}}-\overline{\mathbf{X}}^{\mathcal{I}_{(t)}}\right\|_{\text{F}}^2\right]\nonumber\\
				\le&\left(1+\mu_3^{-1}\right)(1-\kappa \gamma)^2 \mathbb{E}\left[\left\|\mathbf{X}^{\mathcal{I}_{(t)}}-\overline{\mathbf{X}}^{\mathcal{I}_{(t)}}\right\|_{\text{F}}^2\right] \nonumber \\
				&+3\left(1+\mu_3\right)(1-\kappa \gamma)^2 N  \mathcal{D}^2\mathcal{H}^2\eta_{\mathcal{I}_{(t)}}^2 \nonumber \\
				&+{3\left(1+\mu_3\right)(1-\kappa \gamma+\gamma)^2 }
				N\mathcal{D}^2\mathcal{H}^2\eta_{\mathcal{I}_{(t)}}^2 \nonumber \\
				&+{ 3\left(1+\mu_3\right) \gamma^2  }N\mathcal{D}^2\mathcal{H}^2\eta_{\mathcal{I}_{(t)}}^2,
			\end{align}
			where the first inequality invokes $\left \|\mathbf{A}_1+\mathbf{A}_2 \right \|_{\text{F}}^2\le (1+\mu^{-1})\left \|\mathbf{A}_1\right \|_{\text{F}}^2+(1+\mu)\left \|\mathbf{A}_2\right\|_{\text{F}}^2$; the second inequality is due to $(a+b+c)^2\le 3a^2+3b^2+3c^2$ and the third inequality is deduced by Lemma \ref{lee.2}. 
			Plugging (\ref{LL2}) and (\ref{LL4}) into (\ref{LL1}), it is clear that (\ref{LL0}) holds. 
	\end{proof}

	To address the impact of compressed communication, we provide the following recursive inequality.
	\begin{lemma}\label{lee.4}
		Under Assumptions \ref{ass5.Un}, \ref{ass2.YY} and \ref{ass4.YJ}, for any $t\ge 1$, let the sequence $\{\mathbf{X}^{\mathcal{I}_{(t)}}\}$ be generated by Algorithm \ref{alg.1} with parameter $\psi \in (\frac{3}{4r},\frac{1}{r}]$, then there exist constants $\mu_i>0, i\in \{4,5,6\}$ such that the following recursion holds
		\begin{align}\label{L4}
			&\mathbb{E}\left[\left\|\mathbf{X}^{\mathcal{I}_{(t+1)}}-\widehat{\mathbf{X}}^{\mathcal{I}_{(t+1)}}\right\|_{\text{F}}^2\right] \nonumber\\
			&\leq \omega_4 \mathbb{E}\left[\left\|\mathbf{X}^{\mathcal{I}_{(t)}}-\widehat{\mathbf{X}}^{\mathcal{I}_{(t)}}\right\|_{\text{F}}^2\right]
			+\omega_5\mathbb{E}\left[\left\|\mathbf{X}^{\mathcal{I}_{(t)}}-\overline{\mathbf{X}}^{\mathcal{I}_{(t)}}\right\|_{\text{F}}^2\right] \nonumber \\
			&\quad +\omega_6 N \mathcal{D}^2\mathcal{H}^2\eta_{\mathcal{I}_{(t)}}^2 + Ns_{\mathcal{I}_{(t)}}^2\psi r\delta^2,
		\end{align}
		where $\omega_4 := (1-\psi r\varphi)(1+\mu_4)(1+\mu_5^{-1})(1+\gamma\alpha)^2$, $\omega_5 := (1-\psi r\varphi)(1+\mu_4^{-1})(1+\mu_6^{-1})\gamma^2\alpha^2$ and $\omega_6 := (1-\psi r\varphi)[(1+\mu_4)(1+\mu_5)(1+\gamma\alpha)^2 + (1+\mu_4^{-1})(1+\mu_6)\gamma^2\alpha^2]$.
	\end{lemma}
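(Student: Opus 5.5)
I will start from the compressed update (\ref{aa2}) and write
\[
\mathbf{X}^{\mathcal{I}_{(t+1)}}-\widehat{\mathbf{X}}^{\mathcal{I}_{(t+1)}}=\mathbf{Y}-\psi s\,\mathcal{C}(\mathbf{Y}/s),
\]
where $\mathbf{Y}:=\mathbf{X}^{\mathcal{I}_{(t+1)}}-\widehat{\mathbf{X}}^{\mathcal{I}_{(t)}}$ and $s:=s_{\mathcal{I}_{(t)}}$. Conditioning on everything up to the compression step, the first task is to show
\[
\mathbb{E}_{\mathcal{C}}\|\mathbf{Y}-\psi s\mathcal{C}(\mathbf{Y}/s)\|_{\text{F}}^2\le(1-\psi r\varphi)\|\mathbf{Y}\|_{\text{F}}^2+N\psi r s^2\delta^2 .
\]
For this I write $\mathbf{Y}-\psi s\mathcal{C}(\mathbf{Y}/s)=(1-\psi r)\mathbf{Y}+\psi r\big(\mathbf{Y}-s\mathcal{C}(\mathbf{Y}/s)/r\big)$. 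Since $\psi r\in(3/4,1]$, both coefficients are nonnegative and sum to one, so convexity of $\|\cdot\|^2$ (Jensen) applies row by row. Together with the scaled compressor bound derived after Algorithm~\ref{alg.1}, this gives
\[
(1-\psi r)\|\mathbf{Y}\|^2+\psi r\big[(1-\varphi)\|\mathbf{Y}\|^2+Ns^2\delta^2\big]=(1-\psi r\varphi)\|\mathbf{Y}\|^2+N\psi r s^2\delta^2 .
\]
This is the step where the constraint $\psi\le 1/r$ is essential; the lower bound $\psi>3/(4r)$ is presumably only needed later, to make $1-\psi r\varphi$ small enough.

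Next I bound $\mathbb{E}\|\mathbf{Y}\|_{\text{F}}^2$. Using (\ref{aa3}) and $(\mathbf{W}-\mathbf{I})\overline{\mathbf{X}}^{\mathcal{I}_{(t)}}=0$, I decompose
\[
\mathbf{Y}=(\mathbf{I}+\gamma(\mathbf{W}-\mathbf{I}))\big(\mathbf{Z}^{\mathcal{I}_{(t+1)}}-\widehat{\mathbf{X}}^{\mathcal{I}_{(t)}}\big)+\gamma(\mathbf{W}-\mathbf{I})\big(\widehat{\mathbf{X}}^{\mathcal{I}_{(t)}}-\mathbf{Z}^{\mathcal{I}_{(t+1)}}\big)+\dots
\]
A cleaner route is
\[
\mathbf{Y}=\mathbf{Z}^{\mathcal{I}_{(t+1)}}-\widehat{\mathbf{X}}^{\mathcal{I}_{(t)}}+\gamma(\mathbf{W}-\mathbf{I})\big(\widehat{\mathbf{X}}^{\mathcal{I}_{(t)}}-\mathbf{X}^{\mathcal{I}_{(t)}}\big)+\gamma(\mathbf{W}-\mathbf{I})\big(\mathbf{X}^{\mathcal{I}_{(t)}}-\overline{\mathbf{X}}^{\mathcal{I}_{(t)}}\big).
\]
To match the target constants I group this as $\mathbf{A}+\mathbf{B}$ with
\[
\mathbf{A}=(\mathbf{I}-\gamma(\mathbf{W}-\mathbf{I}))\big(\mathbf{X}^{\mathcal{I}_{(t)}}-\widehat{\mathbf{X}}^{\mathcal{I}_{(t)}}\big)+\big(\mathbf{Z}^{\mathcal{I}_{(t+1)}}-\mathbf{X}^{\mathcal{I}_{(t)}}\big),
\qquad
\mathbf{B}=\gamma(\mathbf{W}-\mathbf{I})\big(\mathbf{X}^{\mathcal{I}_{(t)}}-\overline{\mathbf{X}}^{\mathcal{I}_{(t)}}\big).
\]
I will double-check the sign of $\gamma(\mathbf{W}-\mathbf{I})$ against the expression $\mathbf{Y}=\mathbf{Z}+\gamma(\mathbf{W}-\mathbf{I})\widehat{\mathbf{X}}-\widehat{\mathbf{X}}$; this only affects an $\alpha$-norm bound, which is sign-invariant. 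Applying Young's inequality with $\mu_4$ gives $(1+\mu_4)\|\mathbf{A}\|^2+(1+\mu_4^{-1})\|\mathbf{B}\|^2$. Within $\mathbf{A}$ I apply Young with $\mu_5$, and use $\|\mathbf{I}\pm\gamma(\mathbf{W}-\mathbf{I})\|_2\le 1+\gamma\alpha$. The displacement $\mathbf{Z}^{\mathcal{I}_{(t+1)}}-\mathbf{X}^{\mathcal{I}_{(t)}}$ is bounded by Lemma~\ref{lee.2}. The term $\|\mathbf{B}\|^2\le\gamma^2\alpha^2\|\mathbf{X}-\overline{\mathbf{X}}\|^2$ is split, if necessary, with $\mu_6$ against a residual displacement piece. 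That piece is also bounded by $N\mathcal{D}^2\mathcal{H}^2\eta^2$ via Lemma~\ref{lee.2}, which produces the $(1+\mu_4^{-1})(1+\mu_6)\gamma^2\alpha^2$ part of $\omega_6$.

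The main obstacle is bookkeeping so that the Young's-inequality splits reproduce exactly the constants $\omega_4,\omega_5,\omega_6$ stated; the pairing of $\mu_5$ and $\mu_5^{-1}$ in the lemma is unusual and may need to be adjusted. Finally, I multiply by $(1-\psi r\varphi)$, take total expectation via the tower property, and add $N s^2\psi r\delta^2$, which yields (\ref{L4}).
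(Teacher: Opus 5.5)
Your argument is correct. Its first half coincides with the paper's: the convex combination with weights $1-\psi r$ and $\psi r$, followed by Assumption~\ref{ass3.YA} applied row-wise to the scaled input. That step is where $\psi\le 1/r$ is needed; the lower bound on $\psi$ is indeed only used later, in Lemma~\ref{lee.5}.

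The difference is how you split $\mathbf{Y}=\mathbf{X}^{\mathcal{I}_{(t+1)}}-\widehat{\mathbf{X}}^{\mathcal{I}_{(t)}}$. The paper uses
\[
\mathbf{Y}=\big((1+\gamma)\mathbf{I}-\gamma\mathbf{W}\big)\big(\mathbf{Z}^{\mathcal{I}_{(t+1)}}-\widehat{\mathbf{X}}^{\mathcal{I}_{(t)}}\big)+\gamma(\mathbf{W}-\mathbf{I})\big(\mathbf{Z}^{\mathcal{I}_{(t+1)}}-\overline{\mathbf{X}}^{\mathcal{I}_{(t)}}\big),
\]
splits with $\mu_4$, and then must extract the displacement $\mathbf{Z}^{\mathcal{I}_{(t+1)}}-\mathbf{X}^{\mathcal{I}_{(t)}}$ from inside both brackets, with $\mu_5$ and $\mu_6$ respectively. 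This is the origin of the factor $(1+\gamma\alpha)^2$ on the $(1+\mu_5)$ part of $\omega_6$, of the summand $(1+\mu_4^{-1})(1+\mu_6)\gamma^2\alpha^2$ in $\omega_6$, and of the factor $(1+\mu_6^{-1})$ in $\omega_5$.

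Your grouping lets the mixing matrices act on $\mathbf{X}^{\mathcal{I}_{(t)}}$ instead of $\mathbf{Z}^{\mathcal{I}_{(t+1)}}$. The displacement therefore appears once, with no matrix in front, and your $\mathbf{B}$ contains no displacement at all. So the ``residual displacement piece'' you planned to split off with $\mu_6$ does not exist, and the $\mu_5$ pairing needs no adjustment.

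Put $(1+\mu_5^{-1})$ on the $\mathbf{X}-\widehat{\mathbf{X}}$ term and $(1+\mu_5)$ on the displacement. This gives exactly $\omega_4$, together with $(1-\psi r\varphi)(1+\mu_4^{-1})\gamma^2\alpha^2\le\omega_5$ and $(1-\psi r\varphi)(1+\mu_4)(1+\mu_5)\le\omega_6$ for every $\mu_6>0$. Since the quantities these coefficients multiply are nonnegative, (\ref{L4}) follows a fortiori.

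Your route is shorter and yields sharper constants: with the paper's later choice $\mu_6=1$, the $\omega_5$ contribution in Lemma~\ref{lee.5} would be halved. The paper's looser constants are simply the ones it carries forward. In a write-up, state your own bound and conclude by monotonicity rather than trying to manufacture the $\mu_6$ terms.
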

		\begin{proof} According to (\ref{aa2}), letting $\Delta \mathbf{x}_i = ({x}_i^{\mathcal{I}_{(t+1)}}-\widehat{{x}}_i^{\mathcal{I}_{(t)}})/s_{\mathcal{I}_{(t)}}$ for notation brevity, we have 
			\begin{align}\label{Ya}
				&\mathbb{E}\left[\left\|\mathbf{X}^{\mathcal{I}_{(t+1)}}-\widehat{\mathbf{X}}^{\mathcal{I}_{(t+1)}}\right\|_{\text{F}}^2\right] \nonumber\\
				& = \sum_{i=1}^N s_{\mathcal{I}_{(t)}}^2 \mathbb{E}\left[\left\| \Delta \mathbf{x}_i - \psi \mathcal{C}(\Delta \mathbf{x}_i) \right\|^2\right] \nonumber\\
				& = \sum_{i=1}^N s_{\mathcal{I}_{(t)}}^2 \mathbb{E}\left[\left\| (1-\psi r)\Delta \mathbf{x}_i + \psi r \left( \Delta \mathbf{x}_i - \frac{\mathcal{C}(\Delta \mathbf{x}_i)}{r} \right) \right\|^2\right] \nonumber\\
				&\le \sum_{i=1}^N s_{\mathcal{I}_{(t)}}^2 \left( (1-\psi r)\|\Delta \mathbf{x}_i\|^2 + \psi r \mathbb{E}\left[\left\| \Delta \mathbf{x}_i - \frac{\mathcal{C}(\Delta \mathbf{x}_i)}{r} \right\|^2\right] \right) \nonumber\\
				&\le \sum_{i=1}^N s_{\mathcal{I}_{(t)}}^2 \Big( (1-\psi r)\|\Delta \mathbf{x}_i\|^2 + \psi r \big( (1-\varphi)\|\Delta \mathbf{x}_i\|^2 + \delta^2 \big) \Big) \nonumber\\
				& = (1-\psi r\varphi)\left\|\mathbf{X}^{\mathcal{I}_{(t+1)}} - \widehat{\mathbf{X}}^{\mathcal{I}_{(t)}}\right\|_{\text{F}}^2 + N\psi r s_{\mathcal{I}_{(t)}}^2\delta^2,
			\end{align}	
			where  the second inequality is due to Assumption \ref{ass3.YA}.
			
			To move forward, from (\ref{aa3}) we have 
			\begin{align}\label{Ya2}
				&\mathbb{E}\left[\left\|{\mathbf{X}}^{\mathcal{I}_{(t+1)}}-\widehat{\mathbf{X}}^{\mathcal{I}_{(t)}}\right\|_{\text{F}}^2\right]\nonumber\\
				=& \mathbb{E}\Bigg[\bigg\| ((1+\gamma) \mathbf{I}-\gamma \mathbf{W}) \left(\mathbf{Z}^{\mathcal{I}_{(t+1)}}-\widehat{\mathbf{X}}^{\mathcal{I}_{(t)}}\right) \nonumber\\
				&\quad +\gamma(\mathbf{W}-\mathbf{I})\left(\mathbf{Z}^{\mathcal{I}_{(t+1)}}-\overline{\mathbf{X}}^{\mathcal{I}_{(t)}}\right) \bigg\|_{\text{F}}^2\Bigg]\nonumber \\
				\le&\left(1+\mu_4\right) \|(1+\gamma) \mathbf{I}-\gamma \mathbf{W}\|_2^2 \mathbb{E}\left[\left\|\mathbf{Z}^{\mathcal{I}_{(t+1)}}-\widehat{\mathbf{X}}^{\mathcal{I}_{(t)}}\right\|_{\text{F}}^2\right] \nonumber\\
				& +\left(1+{\mu_4}^{-1}\right) \gamma^2 \|\mathbf{W}-\mathbf{I}\|_2^2 \mathbb{E}\left[\left\|\mathbf{Z}^{\mathcal{I}_{(t+1)}}-\overline{\mathbf{X}}^{\mathcal{I}_{(t)}}\right\|_{\text{F}}^2\right]\nonumber \\
				\le & \left(1+\mu_4\right)(1+\gamma \alpha)^2 \mathbb{E}\left[\left\|\mathbf{Z}^{\mathcal{I}_{(t+1)}}-\widehat{\mathbf{X}}^{\mathcal{I}_{(t)}}\right\|_{\text{F}}^2\right] \nonumber\\
				& +\left(1+{\mu_4}^{-1}\right) \gamma^2 \alpha^2 \mathbb{E}\left[\left\|\mathbf{Z}^{\mathcal{I}_{(t+1)}}-\overline{\mathbf{X}}^{\mathcal{I}_{(t)}}\right\|_{\text{F}}^2\right],
			\end{align}
			where the equality is due to $(\mathbf{W}-\mathbf{I})\overline{\mathbf{X}}^{\mathcal{I}_{(t)}}=\mathbf{0}$  and the first inequality is deduced by the fact that $\left\| \mathbf{A}_1+ \mathbf{A}_2\right\|_{\text{F}}^2 \le (1+\mu)\left\| \mathbf{A}_1\right\|_{\text{F}}^2+(1+\mu^{-1})\left\| \mathbf{A}_2\right\|_{\text{F}}^2$ for any matrices $\mathbf{A}_1, \mathbf{A}_2$ and $\mu>0$.
			
			Then, separating the expected terms in (\ref{Ya2}) by Young's inequality, we obtain
			\begin{align}
				&\mathbb{E}\left[\left\|\mathbf{Z}^{\mathcal{I}_{(t+1)}}-\widehat{\mathbf{X}}^{\mathcal{I}_{(t)}}\right\|_{\text{F}}^2\right]
				\le(1+\mu_5) \mathbb{E} \left[\left\|\mathbf{Z}^{\mathcal{I}_{(t+1)}}-\mathbf{X}^{\mathcal{I}_{(t)}}\right\|_{\text{F}}^2 \right]\nonumber\\
				& \quad +(1+\mu_5^{-1}) \mathbb{E}\left[\left\|\mathbf{X}^{\mathcal{I}_{(t)}}-\widehat{\mathbf{X}}^{\mathcal{I}_{(t)}}\right\|_{\text{F}}^2\right], \label{Ya3} \\
				&\mathbb{E}\left[\left\|\mathbf{Z}^{\mathcal{I}_{(t+1)}}-\overline{\mathbf{X}}^{\mathcal{I}_{(t)}}\right\|_{\text{F}}^2\right]
				\le (1+\mu_6) \mathbb{E} \left[ \left\|\mathbf{Z}^{\mathcal{I}_{(t+1)}}-\mathbf{X}^{\mathcal{I}_{(t)}}\right\|_{\text{F}}^2\right]\nonumber\\
				&\quad+(1+\mu_6^{-1})\mathbb{E} \left[ \left \|\mathbf{X}^{\mathcal{I}_{(t)}}-\overline{\mathbf{X}}^{\mathcal{I}_{(t)}}\right\|_{\text{F}}^2\right]. \label{Ya4}
			\end{align}
			Finally,  substituting (\ref{Ya2})--(\ref{Ya4}) into (\ref{Ya}) and utilizing the bounds in Lemma \ref{lee.2}, it can be seen that (\ref{L4}) is proved. 
		\end{proof}

	To proceed, we state the following recursive bounds that incorporate the consensus and compression errors.
	\begin{lemma}\label{lee.5}
		Under Assumptions~\ref{ass5.Un}, \ref{ass2.YY} and \ref{ass4.YJ}, for any $t\ge1$, let the sequence $\{\mathbf{X}^{{t}}\}$ be generated by Algorithm \ref{alg.1} with $\psi \in (\frac{3}{4r},\frac{1}{r}]$ and $\gamma$ satisfying (\ref{Gma}). Then, when the stepsizes are set to $\eta_{ t}=s_t=\frac{b}{\sqrt{t+a}}$ with $a>\frac{4\mathcal{H}}{3\zeta _1\left( \gamma \right)}$  and $\eta_{ t}=s_t=\frac{b}{{t+a}}$ with $a>\frac{8\mathcal{H}}{3\zeta _1\left( \gamma \right)}$, one respectively can obtain the following two inequalities 
		\begin{align}\label{L51}
			&\mathbb{E}\left[\left\|\mathbf{X}^{{t}}-\overline{\mathbf{X}}^{{t}}\right\|_{\text{F}}^2\right]+\mathbb{E}\left[\left\|\mathbf{X}^{{t}}-\widehat{\mathbf{X}}^{{t}}\right\|_{\text{F}}^2\right] \leq C_{1} \eta_{ t}^2
		\end{align}
		where the positive constant $C_{1}$ is given by
		\begin{equation}\label{equ.Ce_def}
			C_{1} = \frac{16\zeta_2(\gamma)\mathcal{H}^2 + 16N\psi r\delta^2}{\zeta_1(\gamma)^2} + 8N\mathcal{D}^2\mathcal{H}^2.
		\end{equation}
		and
		\begin{align}\label{L52}
			&\mathbb{E}\left[\left\|\mathbf{X}^{{t}}-\overline{\mathbf{X}}^{{t}}\right\|_{\text{F}}^2\right]+\mathbb{E}\left[\left\|\mathbf{X}^{{t}}-\widehat{\mathbf{X}}^{{t}}\right\|_{\text{F}}^2\right] \leq 	C_{2}\eta_t^2,
		\end{align}
		where the positive constant $C_{2}$ is given by
		\begin{equation}\label{equ.Ce2_def}
			C_{2} := \frac{32\zeta_2(\gamma)\mathcal{H}^2 + 32N\psi r\delta^2}{\zeta_1(\gamma)^2} + 16N\mathcal{D}^2\mathcal{H}^2,
		\end{equation}		\vspace{-3mm}
		where 		\vspace{-3mm}
		\begin{align}\label{Zt1}
			\zeta_1(\gamma)&:=\min \left\{\frac{\varphi}{4}-2\left( 1+\frac{4}{\kappa } \right) \gamma\alpha ^2-\left( 1-\frac{\varphi}{4} \right) 4\gamma \alpha,\right.\nonumber\\
			&\left.\frac{3\kappa \gamma}{2}-\frac{9\kappa ^2\gamma ^2}{16}-2\left( 1-\psi r\varphi \right) \left( 1+\frac{4}{\varphi} \right) \gamma ^2\alpha ^2\right\},
		\end{align}		\vspace{-3mm}
		and 		\vspace{-3mm}
		\begin{align}\label{Zt2}
			\zeta_2(\gamma) &:= \left( 1+\frac{4}{\kappa \gamma} \right) \Big[ 2\gamma^2\alpha^2 + 3\left( 1+\frac{\kappa \gamma}{4} \right) \nonumber\\
			&\quad \times \big( (1-\kappa \gamma +\gamma)^2 + \gamma^2 + (1-\kappa \gamma)^2 \big) \Big] N\mathcal{D}^2 \nonumber\\
			&\quad + \left( 1-\psi r\varphi \right) \left( 1+\frac{4}{\varphi} \right) \nonumber\\
			&\quad \times \Big[ \left( 1+\frac{\varphi}{4} \right) (1+\gamma \alpha)^2 + 2\gamma^2\alpha^2 \Big] N\mathcal{D}^2.
		\end{align}
	\end{lemma}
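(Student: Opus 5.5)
The plan is to combine Lemma~\ref{lee.3} and Lemma~\ref{lee.4} into a coupled linear recursion for the vector of errors at communication instants,
\[
e_t := \Big(\mathbb{E}\|\mathbf{X}^{\mathcal{I}_{(t)}}-\overline{\mathbf{X}}^{\mathcal{I}_{(t)}}\|_{\text{F}}^2,\ \mathbb{E}\|\mathbf{X}^{\mathcal{I}_{(t)}}-\widehat{\mathbf{X}}^{\mathcal{I}_{(t)}}\|_{\text{F}}^2\Big).
\]
Adding (\ref{LL0}) and (\ref{L4}) gives a scalar potential $V_t$, the sum of the two components. We will show that
\[
V_{t+1}\le (1-\zeta_1(\gamma))V_t+\big(\zeta_2(\gamma)\mathcal{H}^2+N\psi r\delta^2\big)\eta_{\mathcal{I}_{(t)}}^2 .
\]
This uses $s_t=\eta_t$, so the compression absolute error $Ns^2\psi r\delta^2$ merges into the step-size term.

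To get this contraction, we fix the Young parameters explicitly: $\mu_1=\kappa\gamma/4$ (or $4/(\kappa\gamma)$ in the reciprocal slot), $\mu_3=\kappa\gamma/4$, $\mu_2=1$, $\mu_4=\varphi/4$, $\mu_5=4/\varphi$, and $\mu_6=1$. These choices are read off from the form of $\zeta_2$.

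\emph{Consensus coefficient.} For the consensus component, $\omega_2+\omega_5$ must be bounded by $1-\zeta_1$. The term $(1+4/(\kappa\gamma))(1+4/(\kappa\gamma))^{-1}$-type products give $(1-\kappa\gamma)^2(1+\kappa\gamma/4)^2/(\ldots)$, which is at most $1-\tfrac{3\kappa\gamma}{2}+\tfrac{9\kappa^2\gamma^2}{16}$. Adding $\omega_5\le 2(1-\psi r\varphi)(1+4/\varphi)\gamma^2\alpha^2$ yields exactly the second entry of $\zeta_1$.

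\emph{Compression coefficient.} For the compression component, $\omega_1+\omega_4$ must be bounded similarly. Since $\psi r>3/4$, we have $1-\psi r\varphi\le 1-\tfrac{3\varphi}{4}$, and then $(1-\tfrac{3\varphi}{4})(1+\tfrac{\varphi}{4})(1+\tfrac{\varphi}{4})^{-1}$-type products give at most $1-\varphi/4$ plus $\gamma\alpha$ corrections. Those corrections are $(1+\gamma\alpha)^2-1\le 3\gamma\alpha$ (for $\gamma\alpha\le1$) together with $\omega_1=2(1+4/\kappa)\gamma^2\alpha^2$-type terms. This reproduces the first entry of $\zeta_1$.

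Condition (\ref{Gma}) is precisely what makes $\zeta_1(\gamma)>0$. This bookkeeping of constants is the main obstacle, and I would tune the $\mu_i$ to match (\ref{Zt1})--(\ref{Zt2}) term by term.

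\emph{Unrolling the recursion.} Next, we unroll the recursion with $\eta_t=b/\sqrt{t+a}$ and show by induction that $V_t\le C\eta_{\mathcal{I}_{(t)}}^2$. The inductive step needs
\[
(1-\zeta_1)C\eta_{\mathcal{I}_{(t)}}^2+K\eta_{\mathcal{I}_{(t)}}^2\le C\eta_{\mathcal{I}_{(t+1)}}^2 .
\]
Since $\mathcal{I}_{(t+1)}-\mathcal{I}_{(t)}\le\mathcal{H}$, we have
\[
\frac{\eta_{\mathcal{I}_{(t)}}^2}{\eta_{\mathcal{I}_{(t+1)}}^2}\le 1+\frac{\mathcal{H}}{\mathcal{I}_{(t)}+a}\le 1+\frac{\mathcal{H}}{a}<1+\frac{3\zeta_1}{4},
\]
using $a>4\mathcal{H}/(3\zeta_1)$. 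Hence $(1-\zeta_1)(1+\tfrac{3}{4}\zeta_1)\le 1-\zeta_1/4$, and it suffices to take $C\ge K(1+\zeta_1)\cdot 4/\zeta_1$. Enlarging this crudely to $16K/\zeta_1^2$ handles the base case, since $V_1$ is bounded by Lemma~\ref{lee.2} from zero initialization.

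For $\eta_t=b/(t+a)$ the ratio is squared, giving $(1+\mathcal{H}/a)^2$. This is why $a>8\mathcal{H}/(3\zeta_1)$ is required, and the doubled constant $C_2$ follows.

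\emph{Between communication instants.} Finally, we pass from communication instants to every $t$. For $\mathcal{I}_{(k)}\le t<\mathcal{I}_{(k+1)}$, $\widehat{\mathbf{X}}$ is frozen and $\mathbf{X}^t$ moves from $\mathbf{X}^{\mathcal{I}_{(k)}}$ by at most $\mathcal{H}$ local KM steps. Lemma~\ref{lee.2}-type bounds give that the displacement has second moment at most $N\mathcal{D}^2\mathcal{H}^2\eta_{\mathcal{I}_{(k)}}^2$. Applying $\|a+b\|^2\le2\|a\|^2+2\|b\|^2$ to both error terms and using $\eta_{\mathcal{I}_{(k)}}^2\le 2\eta_t^2$ (from the choice of $a$) yields the additive $8N\mathcal{D}^2\mathcal{H}^2$ in $C_1$, and $16N\mathcal{D}^2\mathcal{H}^2$ in $C_2$. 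This gives (\ref{L51}) and (\ref{L52}).
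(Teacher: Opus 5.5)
Your route is the paper's. You sum Lemmas~\ref{lee.3} and~\ref{lee.4}, choose the Young parameters so that $\omega_1+\omega_4$ and $\omega_2+\omega_5$ are both at most $1-\zeta_1(\gamma)$, and induct over communication instants using the step-size ratio controlled by $a$. You then reach arbitrary $t$ via $\|a+b\|^2\le 2\|a\|^2+2\|b\|^2$ and $\eta_{\mathcal{I}_{(k)}}^2\le 2\eta_t^2$ (resp.\ $4\eta_t^2$). As written, however, two steps do not deliver the statement.

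\textbf{Young parameters.} The choice $\mu_3=\kappa\gamma/4$ (and the literal choice $\mu_1=\kappa\gamma/4$) is inverted. It puts the factor $1+\mu_3^{-1}=1+4/(\kappa\gamma)$ into $\omega_2$, which then exceeds $1$ for small $\gamma$, so $\omega_2+\omega_5\le 1-\zeta_1(\gamma)$ cannot hold. You need $\mu_1=\mu_3=4/(\kappa\gamma)$. This gives $\omega_2=(1+\kappa\gamma/4)^2(1-\kappa\gamma)^2\le(1-3\kappa\gamma/4)^2$, and it is also what produces the prefactor $1+4/(\kappa\gamma)$ in $\zeta_2(\gamma)$; your consensus computation in fact uses these values. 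On the compression side the relevant product is $(1+\mu_4)(1+\mu_5^{-1})=(1+\varphi/4)^2$, and $(1-3\varphi/4)(1+\varphi/4)^2\le 1-\varphi/4$.

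\textbf{Constants.} Set $K:=\zeta_2(\gamma)\mathcal{H}^2+N\psi r\delta^2$. The leading term $16K/\zeta_1^2$ of $C_1$ equals $2\cdot 2\cdot 4K/\zeta_1^2$. The first $2$ is Young's factor, the second $2$ is the step-size ratio, and $4K/\zeta_1^2$ is the instant-level bound $V_k\le 4K\eta_{\mathcal{I}_{(k)}}^2/\zeta_1^2$.

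Your ``crude enlargement'' of the instant-level constant to $16K/\zeta_1^2$ therefore ends at $64K/\zeta_1^2$ in $C_1$, and likewise overshoots the $32K/\zeta_1^2$ in $C_2$. No enlargement is needed. Your own inductive requirement $C\ge 4K(1+\zeta_1)/\zeta_1$ is already met by $C=4K/\zeta_1^2$, because $\zeta_1(\gamma)\le\varphi/4\le 1/4$ gives $\zeta_1(1+\zeta_1)\le 1$. The same holds for the squared ratio in the $b/(t+a)$ case.

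The base case also fits under $4K/\zeta_1^2$. Use Assumption~\ref{ass4.YJ} at $x=0$ together with the compression estimate from the proof of Lemma~\ref{lee.4} with $\widehat{\mathbf{X}}^0=0$; Lemma~\ref{lee.2} alone does not produce the $\delta^2$ term. This gives $V_1\le(2-\psi r\varphi)N\mathcal{D}^2\eta_0^2+N\psi r\delta^2 s_0^2\le 4N\mathcal{D}^2\eta_1^2+2N\psi r\delta^2\eta_1^2$. Meanwhile $\zeta_2(\gamma)\ge 3N\mathcal{D}^2$ and $4/\zeta_1^2\ge 64$, so $V_1\le 4K\eta_1^2/\zeta_1^2$.

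This explicit treatment of the base case is more careful than the paper's. The paper asserts that the error at $\mathcal{I}_{(1)}=1$ vanishes, although $x_i^1=\eta_0\widetilde{\mathcal{T}}_i(0,\xi_i^0)$ generally differs across agents and from $\hat{x}_i^1$.
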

			\vspace{-3mm}
	\begin{proof}
		We first define $E_{{t}}=\mathbb{E} [\|\mathbf{X}^{{t}}-\overline{\mathbf{X}}^{{t}}\|_{\text{F}}^2] +	\mathbb{E}  {[\|\mathbf{X}^{{t}}-\widehat{\mathbf{X}}^{{t}}\|_{\text{F}}^2] }$. Building on the results in Lemmas~\ref{lee.3} and \ref{lee.4}, one can obtain the following recursion among epochs
		\begin{align}\label{CC0}
			& E_{\mathcal{I}_{(t+1)}} \nonumber\\&= \mathbb{E} \left[\left\|\mathbf{X}^{\mathcal{I}_{(t+1)}}-\overline{\mathbf{X}}^{\mathcal{I}_{(t+1)}}\right\|_{\text{F}}^2\right] + \mathbb{E} \left[\left\|\mathbf{X}^{\mathcal{I}_{(t+1)}}-\widehat{\mathbf{X}}^{\mathcal{I}_{(t+1)}}\right\|_{\text{F}}^2\right] \nonumber\\
			&\leq (\omega_1+\omega_4)\mathbb{E}\left[ \left\|\widehat{\mathbf{X}}^{\mathcal{I}_{(t)}}-\mathbf{X}^{\mathcal{I}_{(t)}}\right\|_{\text{F}}^2\right ]\nonumber\\
			&\quad+(\omega_2+\omega_5)\mathbb{E}\left[\left\|\mathbf{X}^{\mathcal{I}_{(t)}}-\overline{\mathbf{X}}^{\mathcal{I}_{(t)}}\right\|_{\text{F}}^2\right ]\nonumber\\
			&\quad +(\omega_3+\omega_6)N \mathcal{D}^2\mathcal{H}^2\eta_{\mathcal{I}_{(t)}}^2 + Ns_{\mathcal{I}_{(t)}}^2\psi r\delta^2.
		\end{align}
				\vspace{-3mm}

		Noting the arbitrariness of $\mu_i$, without loss of generality, let $\mu_1=\frac{4}{\kappa \gamma}$, $\mu_2=1$, $\mu_3=\frac{4}{\kappa \gamma}$, $\mu_4=\frac{\varphi}{4}$, $\mu_5=\frac{4}{\varphi}$ and $\mu_6=1$. 
		Specifically, simple calculations lead to 
		\begin{align}\label{CC1}
			&\omega_1+\omega_4\nonumber\\
			=&\left( 1+\mu _1 \right) \left( 1+\mu _2 \right) \gamma ^2\alpha^2\nonumber\\
			&+\left( 1-\psi r \varphi \right) \left( 1+\mu _4 \right) \left( 1+\mu _{5}^{-1} \right) \left( 1+\gamma \alpha \right) ^2\nonumber\\
			=&2\left( 1+\frac{4}{\kappa \gamma} \right) \gamma ^2\alpha^2+\left( 1-\psi r \varphi \right) \left( 1+\frac{\varphi}{4} \right) ^2\left( 1+\gamma \alpha \right) ^2\nonumber\\
			\le& 2\left( 1+\frac{4}{\kappa \gamma} \right) \gamma ^2\alpha^2+\left( 1-\frac{\varphi}{4} \right) \left( 1+4\gamma \alpha \right)\nonumber\\
			\le&1-\frac{\varphi}{4}+2\left( 1+\frac{4}{\kappa} \right) \gamma\alpha ^2+\left( 1-\frac{\varphi}{4} \right) 4\gamma \alpha \nonumber\\
			\le&1-\zeta_1(\gamma), 
		\end{align}
		where the first inequality is established by $\left( 1- \psi r\varphi \right) \left( 1+\frac{\varphi}{4} \right)^2 \le 1-\frac{1}{4}{\varphi}$, $\left( 1+\gamma \alpha \right)^2 \le 1+4\gamma \alpha $ and $\gamma^2<\gamma$. 
		
		Similarly, it can be concluded that 
		\begin{align}\label{CC2}
			&\omega_2+\omega_5\nonumber\\
			=&\left( 1+\mu_{1}^{-1} \right) \left( 1+\mu_{3}^{-1} \right) \left( 1- \kappa\gamma \right) ^2\nonumber\\
			&+\left( 1-\psi r\varphi \right) \left( 1+\mu_{4}^{-1} \right) \left( 1+\mu_{6}^{-1} \right) \gamma ^2\alpha ^2\nonumber\\
			=&\left( 1+\frac{\kappa \gamma}{4} \right) ^2\left( 1-\kappa \gamma \right) ^2+2\left( 1-\psi r\varphi \right) \left( 1+\frac{4}{\varphi} \right) \gamma ^2\alpha ^2\nonumber\\
			\le& 1-\frac{3\kappa \gamma}{2}+\frac{9\kappa ^2\gamma ^2}{16}+2\left( 1-\psi r\varphi \right) \left( 1+\frac{4}{\varphi} \right) \gamma ^2\alpha ^2\nonumber\\
			\le&1-\zeta_1(\gamma), 
		\end{align}
		where the first inequality is due to fact that $\left( 1+\frac{\kappa \gamma}{4} \right) \left( 1-\kappa \gamma \right) \le 1-\frac{3\kappa \gamma}{4}$.
		
		Subsequently, substitute (\ref{CC1}) and (\ref{CC2}) into (\ref{CC0}), we have
		\begin{align}\label{PT0}
			E_{\mathcal{I}_{(t+1)}}\le& (1-\zeta_1(\gamma))	E_{\mathcal{I}_{(t)}}+\zeta_2(\gamma)\mathcal{H}^2\eta_{\mathcal{I}_{(t)}}^2\nonumber\\
			&+ N\psi r\delta^2s_{\mathcal{I}_{(t)}}^2,
		\end{align}
		where $0 <\zeta_1(\gamma )<1$.
		
		In the sequel, we first deduce the upper bounds of ${E}_{\mathcal{I}_{(t)}}$ by induction under two different step size settings, and subsequently derive the upper bounds of ${E}_{t}$ based on these results.
		
		For $\mathcal{I}_{(1)}=1$, we have $E_{\mathcal{I}_{(1)}}=\mathbb{E} \left[\left\|\mathbf{X}^{\mathcal{I}_{(1)}}-\overline{\mathbf{X}}^{\mathcal{I}_{(1)}}\right\|_{\text{F}}^2\right] +	\mathbb{E}  {\left[\left\|\mathbf{X}^{\mathcal{I}_{(1)}}-\widehat{\mathbf{X}}^{\mathcal{I}_{(1)}}\right\|_{\text{F}}^2\right] }=0<\frac{4\zeta _2\left( \gamma \right) \mathcal{H}^2\eta _{\mathcal{I}_{\left( 1 \right)}}^{2}+4N\psi r\delta^2s_{\mathcal{I}_{\left( 1 \right)}}^2}{\zeta _1\left( \gamma \right) ^2}$. Subsequently, for some $\mathcal{I}_{(t)}$, suppose 
		\begin{align}\label{PT1}
			E_{\mathcal{I}_{(t)}}\le \frac{4\zeta _2\left( \gamma \right)\mathcal{H}^2 \eta _{\mathcal{I}_{\left( t \right)}}^{2}+4N\psi r\delta^2 s_{\mathcal{I}_{\left( t \right)}}^2}{\zeta _1\left( \gamma \right) ^2}.
		\end{align}
		Then for $\mathcal{I}_{(t+1)}$, by (\ref{PT0}) and (\ref{PT1}), we have 
		\begin{align}\label{PT2}
			&E_{\mathcal{I}_{(t+1)}}\nonumber\\
			&\le \left( 1-\zeta _1\left( \gamma \right) \right) \frac{4\zeta _2\left( \gamma \right) \mathcal{H}^2\eta _{\mathcal{I}_{\left( t \right)}}^{2}+4N\psi r\delta^2 s_{\mathcal{I}_{\left( t \right)}} ^2}{\zeta _1\left( \gamma \right) ^2}\nonumber\\
			&\quad+\zeta _2\left( \gamma \right)\mathcal{H}^2 \eta _{\mathcal{I}_{\left( t \right)}}^{2}+N\psi r\delta^2 s_{\mathcal{I}_{\left( t \right)}} ^2\nonumber\\
			&\le \frac{4}{\zeta _1\left( \gamma \right) ^2}\left( 1-\frac{3\zeta _1\left( \gamma \right)}{4} \right)  \left( \zeta _2\left( \gamma \right) \mathcal{H}^2\eta _{\mathcal{I}_{\left( t \right)}}^{2}+N\psi r\delta^2 s_{\mathcal{I}_{\left( t \right)}}  ^2 \right). 
		\end{align}
		Meanwhile, by (\ref{jinchou}), note that
		\begin{align}\label{PT5}
			&E_{t}\nonumber\\
			=&\mathbb{E}\left[\left\| \left(\mathbf{X}^{\mathcal{I}_{(t)0}}-\overline{\mathbf{X}}^{\mathcal{I}_{(t)0}}\right) -  \sum_{t'=\mathcal{I}_{(t)0}}^{\mathcal{I}_{(t+1)}-1} \left(\boldsymbol{\vartheta}_{t'} - \frac{1}{N}\mathbf{1}_N\mathbf{1}_N^\top\boldsymbol{\vartheta}_{t'} \right) \right\|_{\text{F}}^2  \right]\nonumber\\
			&+\mathbb{E}\left[\left\| \left(\mathbf{X}^{\mathcal{I}_{(t)0}}-\widehat{\mathbf{X}}^{\mathcal{I}_{(t)0}}\right) - \sum_{t'=\mathcal{I}_{(t)0}}^{\mathcal{I}_{(t+1)}-1} \boldsymbol{\vartheta}_{t'} \right\|_{\text{F}}^2  \right]\nonumber\\
			\le& 2E_{\mathcal{I}_{(t)0}} + 4N \mathcal{D}^2 \mathcal{H}^2\eta_{\mathcal{I}_{(t)0}}^2,
		\end{align}
		where $\mathcal{I}_{(t)0}\in \mathcal{I}_{T}$ denotes the most recent communication step before or equal to $t$ and $\vartheta_{t'} = \eta_{t'}\left (x_i^{t'}-\widetilde{\mathcal{T}}_i(x_i^{t'},\xi_{i}^{t'})\right )$.

		Next, we will discuss two different cases based on the varying values of the step sizes.
		
		\textbf{Case I.} For the case of $
		\eta _{t}=s_{t}=\frac{b}{\sqrt{t+a}}
		$, we have
		\begin{align*}
			\frac{\eta _{\mathcal{I}_{\left( t+1 \right)}}^{2}}{\eta _{\mathcal{I}_{\left( t \right)}}^{2}}=\frac{\mathcal{I}_{\left( t \right)}+a}{\mathcal{I}_{\left( t+1 \right)}+a}\ge 1-\frac{\mathcal{H}}{\mathcal{I}_{\left( t \right)}+a+\mathcal{H}}\ge 1-\frac{\mathcal{H}}{a}.
		\end{align*}
		Combined with $
		a\ge \frac{4\mathcal{H}}{3\zeta _1\left( \gamma \right)}
		$, one can further obtain
		\begin{align}\label{PT3}
			\left( 1-\frac{3\zeta _1\left( \gamma \right)}{4} \right) \eta _{\mathcal{I}_{\left( t \right)}}^{2}\le \left( 1-\frac{\mathcal{H}}{a} \right) \eta _{\mathcal{I}_{\left( t \right)}}^{2}\le \eta _{\mathcal{I}_{\left( t+1 \right)}}^{2}.
		\end{align}
		Then, in light of (\ref{PT2}) and (\ref{PT3}), it can be verified that
		\begin{align}\label{PT4}
			E_{\mathcal{I}_{(t+1)}}\le \frac{4}{\zeta _1\left( \gamma \right) ^2}\left( \zeta _2\left( \gamma \right) \mathcal{H}^2\eta _{\mathcal{I}_{\left( t+1 \right)}}^{2}+N\psi r\delta^2 s_{\mathcal{I}_{\left( t+1 \right)}}^2 \right). 
		\end{align}
		Further, by $\eta _{ t }=s_{t}=\frac{b}{\sqrt{t+a}}$, we have 
		\begin{align}\label{PT6}
			\frac{\eta _{\mathcal{I}_{\left( t \right) 0}}^{2}}{\eta _{t}^{2}}=\frac{t+a}{\mathcal{I}_{\left( t \right) 0}+a}&\le \frac{\mathcal{I}_{\left( t \right) 0}+\mathcal{H}+a}{\mathcal{I}_{\left( t \right) 0}+a}\nonumber\\
			&\le 1+\frac{\mathcal{H}}{\mathcal{I}_{\left( t \right) 0}+a}\le 2.
		\end{align}
		Evidently, combining (\ref{PT4}) (\ref{PT5}) and (\ref{PT6}), gives rise to
		\begin{align}\label{C1}
			E_{t+1}\le& 2 E_{\mathcal{I}_{(t+1)0}}+4N  \mathcal{D}^2 \mathcal{H}^2\eta_{\mathcal{I}_{(t+1)0}}^2\nonumber\\
			\le&
			\frac{16\zeta _2\left( \gamma \right) \mathcal{H}^2\eta _{t+1}^{2}+16N\psi r\delta^2s_{t+1}^2}{\zeta _1\left( \gamma \right) ^2}\nonumber\\
			&+8N \mathcal{D}^2 \mathcal{H}^2\eta_{t+1}^2.
		\end{align}
		
		\textbf{Case II.} For the case of $
		\eta _{t}=s_{{t}}=\frac{b}{t+a}$, we arrive at
		\begin{align*}
			\frac{\eta _{\mathcal{I}_{\left( t+1 \right)}}^{2}}{\eta _{\mathcal{I}_{\left( t \right)}}^{2}}=\left( \frac{\mathcal{I}_{\left( t \right)}+a}{\mathcal{I}_{\left( t+1 \right)}+a} \right) ^2\ge 1-\frac{2\mathcal{H}}{\mathcal{I}_{\left( t \right)}+a+\mathcal{H}}\ge 1-\frac{2\mathcal{H}}{a}.
		\end{align*}
		Then, since $
		a\ge \frac{8\mathcal{H}}{3\zeta _1\left( \gamma \right)}
		$, similar analysis in Case I can deduce
		\begin{align}\label{C2}
			E_{t+1}\le& 2E_{\mathcal{I}_{(t+1)0}}+4N  \mathcal{D}^2 \mathcal{H}^2\eta_{\mathcal{I}_{(t+1)0}}^2\nonumber\\
			\le&
			\frac{32\zeta _2\left( \gamma \right) \mathcal{H}^2\eta _{t+1}^{2}+32N\psi r\delta^2 s_{t+1}^2}{\zeta _1\left( \gamma \right) ^2}\nonumber\\
			&+16N \mathcal{D}^2 \mathcal{H}^2\eta_{t+1}^2.
		\end{align}
		
		As a result, the proof is completed by (\ref{C1}) and (\ref{C2}).
	\end{proof} \vspace{-8mm}

	\vspace{-3mm}
	\subsection{Proof of Theorem \ref{Theo1}}\label{FE}\vspace{-3mm}
		\begin{proof}
		(i) First, we bound the cumulative consensus error. Noting that $\|\mathbf{X}^t - \overline{\mathbf{X}}^t\|_{\text{F}}^2 = \sum_{i=1}^N \|x_i^t - \bar{x}^t\|^2$ and $s_t = \eta_t$, it follows from \eqref{L51} that
				\vspace{-3mm}
		\begin{align}
			\sum_{i=1}^N \mathbb{E}\left[\|x_i^t - \bar{x}^t\|^2\right] \leq C_{1} \eta_t^2.\label{eq:consensus_compact}
		\end{align}
		By summing \eqref{eq:consensus_compact} over $t=1$ to $T$ with $\eta_t = \frac{b}{\sqrt{t+a}}$, we obtain
				\vspace{-3mm}
		\begin{align*}
			\sum_{t=1}^T \sum_{i=1}^N \mathbb{E}\left[\|x_i^t - \bar{x}^t\|^2\right] \leq C_{1} \sum_{t=1}^T \frac{b^2}{t+a} 
			\leq C_{1} b^2 \ln\left(1 + \frac{T}{a}\right). 
		\end{align*}
				\vspace{-3mm}
				
		(ii) Recall that $\bar{x}^{t+1} = \bar{x}^t - \eta_t \bar{d}^t$ with $\bar{d}^t=\frac{1}{N}\sum_{i=1}^{N}(x_{i}^t-\widetilde{\mathcal{T}}_i(x_{i}^t,\xi_{i}^t))$. Since the global surrogate function $\mathcal{G}(x) = \frac{1}{N} \sum_{i=1}^N \mathcal{G}_i(x)$ is $(1+L)$-smooth by Lemma~\ref{lemm.aa}, it gives rise to
				\vspace{-3mm}
		\begin{align}
			&\mathcal{G}(\bar{x}^{t+1}) - \mathcal{G}(\bar{x}^t) \nonumber \\
			&\leq - \eta_t \langle \nabla \mathcal{G}(\bar{x}^t), \bar{d}^t \rangle + \frac{(1+L) \eta_t^2}{2} \|\bar{d}^t\|^2. \label{eq:descent_initial}
		\end{align}  				\vspace{-8mm}
		
		Note that $\nabla \mathcal{G}_i(x) = x - \mathcal{T}_i(x)$, we have 		\vspace{-3mm}
		\begin{align}
			&- \eta_t \langle \nabla \mathcal{G}(\bar{x}^t), \bar{d}^t \rangle \nonumber \\
			&= - \frac{\eta_t}{N} \sum_{i=1}^N \langle \bar{x}^t - \mathcal{T}(\bar{x}^t), x_i^t - \mathcal{T}_i(x_i^t) \rangle \nonumber \\
			&\quad - \frac{\eta_t}{N} \sum_{i=1}^N \langle \bar{x}^t - \mathcal{T}(\bar{x}^t), \mathcal{T}_i(x_i^t) - \widetilde{\mathcal{T}}_i(x_i^t, \xi_i^t) \rangle.
			\label{eq:inner_product_decoupled}
		\end{align}  				\vspace{-8mm}

		Given $0 \le P < 1$, we decompose the first term on the right hand side of \eqref{eq:inner_product_decoupled} into two parts by introducing the weight $\frac{2P}{P+1}$ and its complement $\frac{1-P}{P+1}$ as follows:		\vspace{-3mm}
		\begin{align}\label{LLTT53}
			&	- \frac{\eta_t}{N} \sum_{i=1}^N \langle \bar{x}^t - \mathcal{T}(\bar{x}^t), x_i^t - \mathcal{T}_i(x_i^t) \rangle\nonumber\\
			&= -\frac{\eta_t}{N} \frac{2P}{P+1} \sum_{i=1}^{N} \langle \bar{x}^t - \mathcal{T}(\bar{x}^t), x_i^t - \mathcal{T}_i(x_i^t) \rangle \nonumber\\
			&\quad- \eta_t \frac{1-P}{P+1} \langle \bar{x}^t - \mathcal{T}(\bar{x}^t), \frac{1}{N} \sum_{i=1}^{N} (x_i^t - \mathcal{T}_i(x_i^t)) \rangle.
		\end{align}				\vspace{-8mm}
		
		Bounding the first part of \eqref{LLTT53} yields
		\begin{align}
			&-\frac{\eta_t}{N} \frac{2P}{P+1} \sum_{i=1}^{N} \langle \bar{x}^t - \mathcal{T}(\bar{x}^t), x_i^t - \mathcal{T}_i(x_i^t) \rangle \nonumber\\
			&\leq -\frac{\eta_t P}{P+1} \|\bar{x}^t - \mathcal{T}(\bar{x}^t)\|^2 - \frac{\eta_t P}{N(P+1)} \sum_{i=1}^{N} \|x_i^t - \mathcal{T}_i(x_i^t)\|^2 \nonumber\\
			&\quad + \frac{2\eta_t P}{P+1} \zeta^2 + \frac{2\eta_t P (1+L)^2}{N(P+1)} \sum_{i=1}^{N} \|x_i^t - \bar{x}^t\|^2.
		\end{align} 				\vspace{-8mm}
		
		Bounding the second part of \eqref{LLTT53} yields
		\begin{align}
			&- \eta_t \frac{1-P}{P+1} \langle \bar{x}^t - \mathcal{T}(\bar{x}^t), \frac{1}{N} \sum_{i=1}^{N} (x_i^t - \mathcal{T}_i(x_i^t)) \rangle \nonumber\\
			&\leq -\frac{\eta_t(1-P)}{2(P+1)} \|\bar{x}^t - \mathcal{T}(\bar{x}^t)\|^2 \nonumber\\
			&\quad+ \frac{\eta_t(1-P)}{2(P+1)} \left\| \frac{1}{N} \sum_{i=1}^{N} \left( (\bar{x}^t - x_i^t) + (\mathcal{T}_i(x_i^t) - \mathcal{T}_i(\bar{x}^t)) \right) \right\|^2\nonumber\\
			&\leq -\frac{\eta_t(1-P)}{2(P+1)} \|\bar{x}^t - \mathcal{T}(\bar{x}^t)\|^2 \nonumber\\
			&\quad+ \frac{\eta_t(1-P)(1+L)^2}{2N(P+1)} \sum_{i=1}^{N} \|x_i^t - \bar{x}^t\|^2.
		\end{align} 				\vspace{-8mm}
		
		Combining the above two bounds, we obtain
		\begin{align}
			&- \eta_t \langle \nabla \mathcal{G}(\bar{x}^t), \bar{d}^t \rangle \nonumber \\
			& \leq -\frac{\eta_t}{2} \|\bar{x}^t - \mathcal{T}(\bar{x}^t)\|^2 - \frac{\eta_t P}{N(P+1)} \sum_{i=1}^{N} \|x_i^t - \mathcal{T}_i(x_i^t)\|^2 \nonumber\\
			&\quad+ \frac{2\eta_t P}{P+1} \zeta^2 + \frac{\eta_t (3P+1)(1+L)^2}{2N(P+1)} \sum_{i=1}^{N} \|x_i^t - \bar{x}^t\|^2.
		\end{align} 				\vspace{-8mm}
		
		Define $b_i^t := \mathcal{T}_i(x_i^t) - \mathbb{E}[ \widetilde{\mathcal{T}}_i(x_i^t, \xi_i^t)]$. Invoking Young's inequality with $\rho > 0$ and Assumption \ref{ass2.YY}, we have \vspace{-3mm}
		\begin{align}
			&- \frac{\eta_t}{N} \sum_{i=1}^N \mathbb{E} \big[ \langle \bar{x}^t - \mathcal{T}(\bar{x}^t), b_i^t \rangle \big] \nonumber \\
			&\leq \frac{\eta_t \rho}{2} \|\bar{x}^t - \mathcal{T}(\bar{x}^t)\|^2 + \frac{\eta_t}{2\rho N} \sum_{i=1}^N \mathbb{E}\left[\|b_i^t\|^2\right] \nonumber \\
			&\leq \frac{\eta_t \rho}{2} \|\bar{x}^t - \mathcal{T}(\bar{x}^t)\|^2 + \frac{\eta_t}{2\rho} \beta^2 \nonumber\\
			&\quad + \frac{\eta_t P}{2\rho N} \sum_{i=1}^N \mathbb{E}\left[\|x_i^t - \mathcal{T}_i(x_i^t)\|^2\right]. \label{eq:bias_bound_final}
		\end{align} 				\vspace{-8mm}
		
		Furthermore, the direction $\bar{d}^t$ can be rewritten as
		\begin{align}
			\bar{d}^t &= \big(\bar{x}^t - \mathcal{T}(\bar{x}^t)\big) + \frac{1}{N} \sum_{i=1}^N \big[ (Id-\mathcal{T}_i)(x_i^t) - (Id-\mathcal{T}_i)(\bar{x}^t) \big] \nonumber \\
			&\quad + \frac{1}{N} \sum_{i=1}^N \big( \mathcal{T}_i(x_i^t) - \widetilde{\mathcal{T}}_i(x_i^t, \xi_i^t) \big). \label{eq:delta_decomposition}
		\end{align} 
		Applying the inequality $\|a+b+c\|^2 \leq 3\|a\|^2 + 3\|b\|^2 + 3\|c\|^2$ to \eqref{eq:delta_decomposition}, while leveraging Assumptions \ref{ass1.LL} and \ref{ass2.YY}, we have
		\begin{align}
			\mathbb{E} \big[ \|\bar{d}^t\|^2 \big] 
			&\leq 3 \|\bar{x}^t - \mathcal{T}(\bar{x}^t)\|^2 + \frac{3(1+L)^2}{N} \sum_{i=1}^N \|x_i^t - \bar{x}^t\|^2 \nonumber \\
			&\quad + \frac{3}{N} \sum_{i=1}^N \left( \sigma^2 + M\|x_i^t - \mathcal{T}_i(x_i^t)\|^2 \right). \label{eq:deterministic_final}
		\end{align} 	\vspace{-8mm}
		
		Furthermore, by applying the inequality $\|\sum_{i=1}^4 a_i\|^2 \leq 4\sum_{i=1}^4 \|a_i\|^2$, we have
		\begin{align}\label{LLL60}
			&\|x_i^t - \mathcal{T}_i(x_i^t)\|^2 \nonumber\\
			=& \|(x_i^t - \bar{x}^t) + (\bar{x}^t - \mathcal{T}(\bar{x}^t)) \nonumber\\
			&+ (\mathcal{T}(\bar{x}^t) - \mathcal{T}_i(\bar{x}^t)) + (\mathcal{T}_i(\bar{x}^t) - \mathcal{T}_i(x_i^t))\|^2 \nonumber\\
			\leq& 4 \|x_i^t - \bar{x}^t\|^2 + 4 \|\bar{x}^t - \mathcal{T}(\bar{x}^t)\|^2 \nonumber\\
			&+ 4 \|\mathcal{T}(\bar{x}^t) - \mathcal{T}_i(\bar{x}^t)\|^2 + 4 \|\mathcal{T}_i(\bar{x}^t) - \mathcal{T}_i(x_i^t)\|^2.
		\end{align} 	\vspace{-8mm}

		Based on (\ref{LLL60}), one can derive that
		\begin{align}\label{eq:local_operator_bound}
			\frac{1}{N} \sum_{i=1}^{N} &\|x_i^t - \mathcal{T}_i(x_i^t)\|^2 \leq 4 \|\bar{x}^t - \mathcal{T}(\bar{x}^t)\|^2 + 4 \zeta^2 \nonumber\\
			&+ \frac{4(1+L^2)}{N} \sum_{i=1}^{N} \|x_i^t - \bar{x}^t\|^2,
		\end{align}
		where the inequality is obtained by Assumptions~\ref{ass1.LL} and \ref{ass.heteroge}.
		
		Substituting \eqref{eq:local_operator_bound} into \eqref{eq:deterministic_final} and grouping the corresponding terms, we obtain
		\begin{align}\label{eq:d_bound_refined}
			&\mathbb{E} \big[ \|\bar{d}^t\|^2 \big] \leq \; 3(1+4M) \|\bar{x}^t - \mathcal{T}(\bar{x}^t)\|^2 + 3(\sigma^2 + 4M\zeta^2) \nonumber \\
			&+ \frac{3 \big( (1+L)^2 + 4M(1+L^2) \big)}{N} \sum_{i=1}^N \|x_i^t - \bar{x}^t\|^2.
		\end{align}
		Substituting \eqref{eq:inner_product_decoupled}--\eqref{eq:bias_bound_final} and (\ref{eq:d_bound_refined}) into \eqref{eq:descent_initial} with the carefully chosen $\rho = \frac{P+1}{2}$, we have
		\begin{align}
			&\mathbb{E} \big[ \mathcal{G}(\bar{x}^{t+1}) - \mathcal{G}(x^*) \big] - \mathbb{E} \big[ \mathcal{G}(\bar{x}^t) - \mathcal{G}(x^*) \big] \nonumber \\
			&\leq - \eta_t \varpi_{1,t} \mathbb{E} \left[ \|\bar{x}^t - \mathcal{T}(\bar{x}^t)\|^2 \right] + \varpi_{2,t} \sum_{i=1}^N \mathbb{E} \left[ \|x_i^t - \bar{x}^t\|^2 \right] \nonumber \\
			&\quad + \frac{\eta_t}{P+1} \beta^2 + \frac{2\eta_t P}{P+1} \zeta^2 + \frac{3(1+L)\eta_t^2}{2} (\sigma^2 + 4M\zeta^2), \label{eq:final_descent}
		\end{align}
		where the parameters $\varpi_{1,t}, \varpi_{2,t}$ are defined as 
		\begin{align*}
			\varpi_{1,t} := &\frac{1-P}{4} - \frac{3(1+L)(1+4M)\eta_t}{2}, \\
			\varpi_{2,t} :=& \frac{\eta_t (3P+1)(1+L)^2}{2N(P+1)} \nonumber\\
			&+ \frac{3(1+L)\eta_t^2 \big( (1+L)^2 + 4M(1+L^2) \big)}{2N}.
		\end{align*} \vspace{-8mm}

		By selecting a non-increasing sequence $\{\eta_{t}\}$ and noting that $P<1$, we can select an initialization $\eta_{0} < \frac{1-P}{6(1+L)(1+4M)}$, which guarantees that $\varpi_{1,t} \geq \underline{\varpi}_1 > 0$ for all $t\ge 0$, where $\underline{\varpi}_1 := \frac{1-P}{4} - \frac{3(1+L)(1+4M)\eta_0}{2}$. 
		
		Rearranging the terms in \eqref{eq:final_descent} and noting $\mathbb{E} \big[ \mathcal{G}(\bar{x}^{T+1}) - \mathcal{G}(x^*) \big] \ge  0$, we have
		\begin{align}\label{eq:sum_gradient}
			&\sum_{t=0}^T \eta_t \mathbb{E} \big[ \|\bar{x}^t - \mathcal{T}(\bar{x}^t)\|^2 \big] \nonumber \\
			&\leq \frac{1}{\underline{\varpi}_1} \mathbb{E} \big[ \mathcal{G}(\bar{x}^0) - \mathcal{G}(x^*) \big] 
			+ \frac{1}{\underline{\varpi}_1} \sum_{t=0}^T \varpi_{2,t} \sum_{i=1}^N \mathbb{E} \big[ \|x_i^t - \bar{x}^t\|^2 \big] \nonumber \\
			&\quad + \frac{1}{\underline{\varpi}_1} \sum_{t=0}^T \Bigg( \frac{\eta_t}{P+1} \beta^2 + \frac{2\eta_t P}{P+1} \zeta^2 \nonumber \\
			&\qquad\qquad\qquad\quad + \frac{3(1+L)\eta_t^2}{2} (\sigma^2 + 4M\zeta^2) \Bigg).
		\end{align}
						\vspace{-8mm}

		Utilizing the $(1+L)$-smoothness of $Id-\mathcal{T}_i$, we have
		\begin{align}
			&\frac{1}{N}\sum_{i=1}^N \mathbb{E} \left[ \| x_i^t - \mathcal{T}(x_i^t) \|^2 \right] \nonumber \\
			&= \frac{1}{N}\sum_{i=1}^N \mathbb{E} \Bigg[ \bigg\| \frac{1}{N}\sum_{j=1}^N \big( (Id-\mathcal{T}_j)(x_i^t) - (Id-\mathcal{T}_j)(\bar{x}^t) \big) \nonumber \\
			&\quad + \big( \bar{x}^t - \mathcal{T}(\bar{x}^t) \big) \bigg\|^2 \Bigg] \nonumber \\
			&\leq \frac{2(1+L)^2}{N} \sum_{i=1}^N \mathbb{E} \left[ \|x_i^t - \bar{x}^t\|^2 \right] + 2 \mathbb{E} \left[ \|\bar{x}^t - \mathcal{T}(\bar{x}^t)\|^2 \right]. \label{eq:final_local_operator_bound_prep}
		\end{align}
						\vspace{-8mm}

		Let $\overline{\varpi}_2 = \max_t \varpi_{2,t}$. Summing \eqref{eq:final_local_operator_bound_prep} from $t=1$ to $T$, and substituting $\sum_{t=1}^T \mathbb{E}[\|\bar{x}^t - \mathcal{T}(\bar{x}^t)\|^2] \leq \frac{1}{\eta_T} \sum_{t=0}^T \eta_t \mathbb{E}[\|\bar{x}^t - \mathcal{T}(\bar{x}^t)\|^2]$ along with \eqref{eq:sum_gradient}, it follows that
		\begin{align}
			&\sum_{t=1}^T \frac{1}{N} \sum_{i=1}^N \mathbb{E} \big[ \| x_i^t - \mathcal{T}(x_i^t) \|^2 \big] \nonumber \\
			&\leq \left( \frac{2 (1+L)^2}{N} + \frac{2 \overline{\varpi}_2}{\underline{\varpi}_1 \eta_T} \right) \sum_{t=1}^T \sum_{i=1}^N \mathbb{E} \big[ \|x_i^t - \bar{x}^t\|^2 \big] \nonumber \\
			&\quad + \frac{2}{\underline{\varpi}_1 \eta_T} \sum_{t=0}^T \Bigg( \frac{\eta_t}{P+1} \beta^2 + \frac{2\eta_t P}{P+1} \zeta^2 \nonumber \\
			&\qquad\qquad\qquad\quad + \frac{3(1+L)\eta_t^2}{2} (\sigma^2 + 4M\zeta^2) \Bigg)\nonumber \\
			&\quad + \frac{2}{\underline{\varpi}_1 \eta_T} \mathbb{E} \big[ \mathcal{G}(\bar{x}^0) - \mathcal{G}(x^*) \big]. \label{eq:final_local_operator_bound}
		\end{align}
						\vspace{-8mm}

		The accumulated step-size bounds are derived as
		\begin{align}
			\sum_{t=0}^T \eta_t &\leq \eta_0 + \int_0^T \frac{b}{\sqrt{t+a}} dt \leq \eta_0 + 2b\sqrt{T+a}, \label{eq:sum_eta} \\
			\sum_{t=0}^T \eta_t^2 &\leq \eta_0^2 + b^2 \ln\left(1 + \frac{T}{a}\right). \label{eq:sum_eta_sq}
		\end{align} \vspace{-8mm}
		
		Dividing both sides of \eqref{eq:final_local_operator_bound} by $T$ and substituting the bounds \eqref{eq:sum_eta} and \eqref{eq:sum_eta_sq} along with $\eta_T = \frac{b}{\sqrt{T+a}}$, we obtain
		\begin{align}
			&\frac{1}{T} \sum_{t=1}^T \frac{1}{N} \sum_{i=1}^N \mathbb{E} \left[ \| x_i^t - \mathcal{T}(x_i^t) \|^2 \right] \nonumber \\
			&\leq \frac{1}{T} \left( \frac{2(1+L)^2}{N} + \frac{2 \overline{\varpi}_2 \sqrt{T+a}}{\underline{\varpi}_1 b} \right) C_{1} b^2 \ln\left(1 + \frac{T}{a}\right) \nonumber \\
			&\quad + \frac{2 \sqrt{T+a}}{\underline{\varpi}_1 b T} \left( \frac{\beta^2}{P+1} + \frac{2P\zeta^2}{P+1} \right) \sum_{t=0}^T \eta_t \nonumber \\
			&\quad + \frac{2 \sqrt{T+a}}{\underline{\varpi}_1 b T} \cdot \frac{3(1+L)(\sigma^2 + 4M\zeta^2)}{2} \sum_{t=0}^T \eta_t^2 \nonumber \\
			&\quad + \frac{2 \sqrt{T+a}}{\underline{\varpi}_1 b T} \mathbb{E} \big[ \mathcal{G}(\bar{x}^0) - \mathcal{G}(x^*) \big]. \label{eq:final_rate_expansion}
		\end{align}				\vspace{-8mm}
		
		Evaluating the asymptotic order with respect to $T$ for the right-hand side of \eqref{eq:final_rate_expansion} yields
		\begin{align}
			&\frac{1}{T} \sum_{t=1}^T \frac{1}{N} \sum_{i=1}^N \mathbb{E} \left[ \| x_i^t - \mathcal{T}(x_i^t) \|^2 \right] \nonumber\\
			&\le \mathcal{O}\left( \frac{C_{1}\ln T}{\sqrt{T}} \right) + \mathcal{O}\left( \frac{\beta^2}{P+1} + \frac{2P\zeta^2}{P+1} \right). \label{eq:final_O_notation}
		\end{align}  \vspace{-8mm}
		
		This completes the  proof.
	\end{proof}

	\begin{lemma}\label{lee.6}
		Consider a nonnegative sequence $\{\varPsi_t\}$ satisfying
		\begin{align}\label{eq:psi_dynamics} 
			\varPsi_{t+1} \leq \left( 1-\frac{r_1}{t+a} \right) \varPsi_t  + \frac{r_2}{(t+a)^2} + \frac{r_3}{t+a},
		\end{align}
		where $r_1,r_2, r_3 > 0$. If $r_1 \geq 1$ and $a > r_1 $, then for all $t\geq 0$,
		\begin{align}\label{Lem5}
			\varPsi_{t} \leq \frac{D_1 \ln(t+a) + D_2}{t-1+a} + 2r_3,
		\end{align}
		with $D_1 = 2r_2$ and $D_2 = a\varPsi_0 + r_2\left(1 + \frac{2}{a}\right) + r_3$.
	\end{lemma}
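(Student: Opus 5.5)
The idea is to rescale $\varPsi_t$ by a linear weight so that the recursion \eqref{eq:psi_dynamics} telescopes. First I would weaken the contraction factor. Since $a>r_1$, the factor $1-\frac{r_1}{t+a}$ is nonnegative for every $t\ge 0$. Since $r_1\ge 1$ and $\varPsi_t\ge 0$,
\begin{align*}
\Big(1-\frac{r_1}{t+a}\Big)\varPsi_t\le \Big(1-\frac{1}{t+a}\Big)\varPsi_t=\frac{t-1+a}{t+a}\,\varPsi_t .
\end{align*}
Multiplying \eqref{eq:psi_dynamics} by $t+a>0$ then gives
\begin{align*}
(t+a)\varPsi_{t+1}\le (t-1+a)\varPsi_t+\frac{r_2}{t+a}+r_3 .
\end{align*}

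Next I would introduce $\Phi_t:=(t-1+a)\varPsi_t$, which is nonnegative because $a>1$. The last inequality then reads $\Phi_{t+1}\le \Phi_t+\frac{r_2}{t+a}+r_3$. Summing from $0$ to $t-1$ yields
\begin{align*}
\Phi_t\le (a-1)\varPsi_0+r_2\sum_{k=0}^{t-1}\frac{1}{k+a}+r_3 t .
\end{align*}
The harmonic-type sum is handled by the usual integral comparison:
\begin{align*}
\sum_{k=0}^{t-1}\frac{1}{k+a}\le \frac1a+\int_0^{t}\frac{du}{u+a}\le \frac1a+\ln(t+a),
\end{align*}
where the last step drops $-\ln a$, which is valid because $a>r_1\ge1$ gives $\ln a>0$. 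For $t=0$ the sum is empty and the bound holds trivially.

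Finally I would divide by $t-1+a>0$. The term $\frac{r_3 t}{t-1+a}$ is at most $r_3\le 2r_3$, since $t\le t-1+a$ when $a\ge 1$. The remaining terms give
\begin{align*}
\varPsi_t\le \frac{r_2\ln(t+a)+a\varPsi_0+r_2/a}{t-1+a}+2r_3 .
\end{align*}
This is dominated by the claimed bound \eqref{Lem5} with $D_1=2r_2$ and $D_2=a\varPsi_0+r_2(1+\frac2a)+r_3$, because the stated constants are larger.

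I expect no serious obstacle. The only delicate point is the first step: replacing $r_1$ by $1$ requires both the nonnegativity of $\varPsi_t$ and $a>r_1$, which is exactly where the hypotheses enter. A secondary check is the $t=0$ case and the sign of $\ln a$ in the integral comparison, both of which follow from $a>1$.
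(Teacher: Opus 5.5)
Your proof is correct, and it takes a different route from the paper's.

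\textbf{The paper's route.} It unrolls the recursion into a convolution with transition products $\Phi(t-1,k+1)=\prod_{j=k+1}^{t-1}\big(1-\frac{r_1}{j+a}\big)$. It bounds these by $\big(\frac{k+1+a}{t-1+a}\big)^{r_1}$ using $1-x\le e^{-x}$ and an integral comparison. It then uses $r_1\ge 1$ and $(k+1+a)/(k+a)\le 2$ to relax each summand to order $1/(t-1+a)$. That relaxation is the source of the factors $2$ in $D_1=2r_2$ and in $2r_3$.

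\textbf{Your route.} You spend $r_1\ge 1$ and $\varPsi_t\ge 0$ at the first step, after which $(t-1+a)\varPsi_t$ telescopes exactly. This is shorter and needs no product estimates. It also gives the sharper bound $\varPsi_t\le \frac{r_2\ln(t+a)+a\varPsi_0+r_2/a}{t-1+a}+r_3$. Moreover, it only needs $a>1$. Contrary to your closing remark, the replacement step uses just $r_1\ge 1$ and $\varPsi_t\ge 0$. The paper, by contrast, genuinely needs $a>r_1$ to keep the factors $1-\frac{r_1}{k+a}$ nonnegative while unrolling.

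\textbf{What each approach buys.} The paper's heavier route keeps the exponent $r_1$ visible, at least in principle. A more careful evaluation of $\sum_k (k+1+a)^{r_1}/\big((k+a)(t-1+a)^{r_1}\big)$ gives something of order $1/r_1$ rather than $2$. That would yield a steady-state term of order $r_3/r_1$. In the proof of Theorem~\ref{Theo2}, this would make the bias neighborhood independent of $b$. Your reduction to $r_1=1$ discards this information irreversibly. However, the paper's own proof also throws it away, so for the lemma as stated your argument is strictly better.
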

		\begin{proof}
			For $t \in \{0, 1\}$, \eqref{Lem5} holds trivially. In the following, we consider the case of $t\ge 2$. 
			Define the state transition $\Phi(t, s) \triangleq \prod_{k=s}^{t} \big(1 - \frac{r_1}{k+a}\big)$ for $t \geq s$. Using $1-x \leq e^{-x}$, we bound the decay rate
			\begin{align}\label{eq:phi_bound}
				\Phi(t, s) &\leq \exp\left( -r_1 \sum_{k=s}^{t} \frac{1}{k+a} \right) \leq \exp\left( -r_1 \int_{s}^{t} \frac{1}{\tau+a} d\tau \right)\nonumber\\
				& = \left( \frac{s+a}{t+a} \right)^{r_1}.
			\end{align} \vspace{-8mm}
			
			Unrolling \eqref{eq:psi_dynamics} with the convention $\Phi(t-1, t) \equiv 1$ yields
			\begin{align}\label{eq:psi_unrolled}
				\varPsi_t \leq& \Phi(t-1, 0) \varPsi_0 + r_2 \sum_{k=0}^{t-1} \frac{\Phi(t-1, k+1)}{(k+a)^2} \nonumber\\&+ r_3 \sum_{k=0}^{t-1} \frac{\Phi(t-1, k+1)}{k+a}.
			\end{align} \vspace{-8mm}
			
			Since $r_1 \geq 1$, we have
			\begin{align}\label{eq:term1}
				\Phi(t-1, 0) \varPsi_0 \leq \left( \frac{a}{t-1+a} \right)^{r_1} \varPsi_0 \leq \frac{a \varPsi_0}{t-1+a}.
			\end{align} \vspace{-8mm}
			
			For $k \leq t-2$, since $a > 1$, we have $\frac{k+1+a}{k+a} \leq 2$. Thus,
			\begin{align}\label{eq:algebraic_relaxation}
				(k+1+a)^{r_1} \leq 2(k+a) (t-1+a)^{r_1-1}.
			\end{align} \vspace{-8mm}
			
			Isolating the $k=t-1$ term and applying \eqref{eq:algebraic_relaxation}, one can see that 
			\begin{align}\label{eq:term2}
				&r_3 \sum_{k=0}^{t-1} \frac{\Phi(t-1, k+1)}{k+a} \nonumber\\
				&= r_3 \sum_{k=0}^{t-2} \frac{\Phi(t-1, k+1)}{k+a} + \frac{r_3}{t-1+a} \nonumber \\
				&\leq \frac{r_3}{(t-1+a)^{r_1}} \sum_{k=0}^{t-2} \frac{2(k+a)(t-1+a)^{r_1-1}}{k+a} + \frac{r_3}{t-1+a} \nonumber \\
				&\leq 2r_3 + \frac{r_3}{t-1+a}.
			\end{align} \vspace{-8mm}
			
			Similarly, applying \eqref{eq:algebraic_relaxation} and $\sum_{k=0}^{t-2} \frac{1}{k+a} \leq \ln(t+a) + \frac{1}{a}$, it can be seen that
			\begin{align}\label{eq:term3}
				&r_2 \sum_{k=0}^{t-1} \frac{\Phi(t-1, k+1)}{(k+a)^2} \nonumber\\
				&\leq \frac{r_2}{(t-1+a)^{r_1}} \sum_{k=0}^{t-2} \frac{2(k+a)(t-1+a)^{r_1-1}}{(k+a)^2} + \frac{r_2}{(t-1+a)^2} \nonumber \\
				&\leq \frac{2r_2 \big( \ln(t+a) + \frac{1}{a} \big)}{t-1+a} + \frac{r_2}{t-1+a}.
			\end{align} \vspace{-8mm}

			Substituting \eqref{eq:term1}--\eqref{eq:term3} into \eqref{eq:psi_unrolled} yields
			\begin{align*}
				\varPsi_t &\leq \frac{a \varPsi_0}{t-1+a} + 2r_3 + \frac{r_3}{t-1+a} \nonumber\\
				&	\quad+ \frac{2r_2 \ln(t+a) + \frac{2r_2}{a}}{t-1+a} + \frac{r_2}{t-1+a} \\
				&= \frac{2r_2 \ln(t+a) + \left( a\varPsi_0 + r_2\left(1 + \frac{2}{a}\right) + r_3 \right)}{t-1+a} + 2r_3.
			\end{align*}
			This matches \eqref{Lem5} with $D_1 = 2r_2$ and $D_2 = a\varPsi_0 + r_2\left(1 + \frac{2}{a}\right) + r_3$.
		\end{proof}

	\vspace{-3mm}
	\subsection{Proof of Theorem \ref{Theo2}}\label{FF}\vspace{-3mm}
	\begin{proof}
		(i) Recalling $\|\mathbf{X}^t - \overline{\mathbf{X}}^t\|_{\text{F}}^2=\sum_{i=1}^N \|x_i^t - \bar{x}^t\|^2  $ and invoking \eqref{L52}, we have
		\begin{align}
			\sum_{i=1}^{N}\mathbb{E}[\|x_{i}^t-\bar{x}^t\|^2]\le C_2 \eta_{t}^2 = \frac{C_2 b^2}{(t+a)^2}.
		\end{align}
						\vspace{-8mm}
		
		(ii) Let $\varPsi_t := \mathbb{E} \big[ \mathcal{G}(\bar{x}^t) - \mathcal{G}^*\big]$ denote the expected surrogate gap evaluated at the network average state. Under the contractive condition ($L < 1$), invoking Lemma~\ref{lemm.aa}, we have $\|\bar{x}^t - \mathcal{T}(\bar{x}^t)\|^2 \geq 2(1-L)\big( \mathcal{G}(\bar{x}^t)-\mathcal{G}^* \big)$.

		Substituting this into \eqref{eq:final_descent} yields  				\vspace{-5mm}
		\begin{align}
			\Psi_{t+1} \leq \;& \big( 1 - 2\eta_t \varpi_{1,t} (1-L) \big) \Psi_t + \varpi_{2,t} \sum_{i=1}^N \mathbb{E} \left[ \|x_i^t - \bar{x}^t\|^2 \right] \nonumber \\
			&+ \frac{\eta_t}{P+1} \beta^2 + \frac{2\eta_t P}{P+1} \zeta^2 + \frac{3(1+L)\eta_t^2}{2} (\sigma^2 + 4M\zeta^2). \label{eq:psi_recurrence}
		\end{align}	\vspace{-8mm}
						
		Selecting $a \geq \frac{12(1+L)(1+4M)b}{1-P}$ guarantees that $\varpi_{1,t} \geq \frac{1-P}{8}$ for all $t \geq 0$, which consequently yields
		\begin{align}\label{L77}
			1 - 2\eta_t \varpi_{1,t} (1-L) \leq 1 - \frac{(1-L)(1-P)b}{4(t+a)}.
		\end{align} 	\vspace{-8mm}

		Substituting \eqref{L77} and the definition of $\varpi_{2,t}$ into \eqref{eq:psi_recurrence}, and applying the bounds $a \geq \frac{12(1+L)(1+4M)b}{1-P}$ and $\frac{1}{(t+a)^k} \le \frac{1}{(t+a)^2}$ for $k \ge 3$, we have
		\begin{align}
			\mathbb{E} \big[ \mathcal{G}(\bar{x}^{t+1}) - \mathcal{G}^* \big] \leq \;& \left( 1 - \frac{r_1}{t+a} \right) \mathbb{E} \big[ \mathcal{G}(\bar{x}^t) - \mathcal{G}^* \big]\nonumber\\
			& + \frac{r_2}{(t+a)^2} + \frac{r_3}{t+a}, \label{eq:mapped_recurrence}
		\end{align}
		where the constants are defined as
		\begin{align*}
			r_1 &:= \frac{(1-L)(1-P)b}{4}, \\
			r_2 &:= \frac{3(1+L)b^2 (\sigma^2 + 4M\zeta^2)}{2} + \frac{b^3(3P+1)(1+L)^2 C_2}{2N(P+1)} \\
			&\quad + \frac{3(1+L) C_2 b^4 \big( (1+L)^2 + 4M(1+L^2) \big)}{2N}, \\
			r_3 &:= b \left( \frac{\beta^2 + 2P\zeta^2}{P+1} \right).
		\end{align*} \vspace{-8mm}

		To satisfy the premise of Lemma \ref{lee.6}, we require $b > \frac{4}{(1-L)(1-P)}$. In this case, applying Lemma \ref{lee.6} directly to \eqref{eq:mapped_recurrence} yields
		\begin{align}\label{eq:lemma6_applied}
			\mathbb{E} \big[ \mathcal{G}(\bar{x}^t) - \mathcal{G}^* \big] \leq \frac{D_1 \ln(t+a) + D_2}{t-1+a} + {2r_3},
		\end{align}
		where $D_1 = 2r_2$ and $D_2 = a\varPsi_0 + r_2\left(1 + \frac{2}{a}\right) + r_3$.
		
		Since $Id-\mathcal{T}$ is $(1+L)$-Lipschitz continuous, we have \vspace{-3mm}
		\begin{align}\label{eq:local_gap_expansion}
			&\frac{1}{N} \sum_{i=1}^N \mathbb{E} \big[ \mathcal{G}(x_i^t) - \mathcal{G}^* \big] \nonumber \\
			&\leq \frac{1}{N} \sum_{i=1}^N \mathbb{E} \bigg[ \mathcal{G}(\bar{x}^t) - \mathcal{G}^* + \langle \bar{x}^t - \mathcal{T}(\bar{x}^t), x_i^t - \bar{x}^t \rangle\bigg]\nonumber\\
			&\quad   + \frac{1+L}{2N} \sum_{i=1}^N\mathbb{E} \bigg[ \|x_i^t - \bar{x}^t\|^2 \bigg] \nonumber \\
			&= \mathbb{E} \big[ \mathcal{G}(\bar{x}^t) - \mathcal{G}^* \big] + \frac{1+L}{2N} \sum_{i=1}^N \mathbb{E} \big[ \|x_i^t - \bar{x}^t\|^2 \big].
		\end{align}
				\vspace{-8mm}
		
		Substituting the consensus error bound \eqref{L52} and incorporating the established  bound \eqref{eq:lemma6_applied}, we deduce \vspace{-3mm}
		\begin{align}\label{eq:final_exact_bound}
			&\frac{1}{N} \sum_{i=1}^N \mathbb{E} \big[ \mathcal{G}(x_i^t) - \mathcal{G}^* \big] \nonumber\\
			&\leq \mathbb{E} \big[ \mathcal{G}(\bar{x}^t) - \mathcal{G}^* \big] + \frac{1+L}{2N} C_2 \eta_t^2 \nonumber \\
			&\leq \frac{D_1 \ln(t+a) + D_2}{t-1+a} + 2r_3 + \frac{(1+L)C_2 b^2}{2N(t+a)^2} \nonumber \\
			&\leq \frac{D_1 \ln(t+a) + \widetilde{D}_2}{t-1+a} + \frac{2b(\beta^2 + 2P\zeta^2)}{P+1},
		\end{align}
		where $\widetilde{D}_2 := D_2 + \frac{(1+L)C_2 b^2}{2N}$, and the last step utilizes $\frac{1}{(t+a)^2} \le \frac{1}{t-1+a}$ for $t \ge 1$ alongside substituting $r_3$.

		Further, it follows from Lemma~\ref{lemm.aa} that $\frac{1-L}{2}\|x_i^t - x^*\|^2 \leq \mathcal{G}(x_i^t) - \mathcal{G}^*$. Applying this to \eqref{eq:final_exact_bound} gives rise to \vspace{-3mm}
		\begin{align*}
			\frac{1}{N} \sum_{i=1}^N \mathbb{E} \big[ \|x_i^t - x^*\|^2 \big] \leq \;& \frac{2}{1-L} \left( \frac{D_1 \ln(t+a) + \widetilde{D}_2}{t-1+a} \right) \nonumber\\
			&+ \frac{4b(\beta^2 + 2P\zeta^2)}{(1-L)(P+1)}.
		\end{align*} \vspace{-8mm}
		
		This completes the  proof.
	\end{proof}
}

\bibliographystyle{elsarticle-harv}
\bibliography{points}

\end{document}